\numberwithin{equation}{section}
\def\e{\varepsilon}
\def\epsilon{\varepsilon}
\DeclareMathOperator{\supp}{supp}
\DeclareMathOperator{\loc}{loc}
\newcommand{\tcb}[1]{\textcolor{blue}{#1}}
\newcommand{\inner}[2]{\langle {#1}, {#2}\rangle}
\newcommand{\bbE}{\mathbb{E}}
\newcommand{\bbR}{\mathbb{R}}
\newcommand{\bbP}{\mathbb{P}}
\newcommand{\bfP}{\mathbf{P}}
\newcommand{\bfX}{\mathbf{X}}
\newcommand{\cH}{\mathcal{H}}
\newcommand{\calk}{\mathcal{K}}
\newcommand{\oino}{\omega\in\Omega}
\newtheorem{theorem}{Theorem}[section]
\newtheorem*{theorem*}{Theorem}
\newtheorem{lemma}{Lemma}[section]
\newtheorem{proposition}{Proposition}[section]
\newtheorem*{proposition*}{Proposition}
\newtheorem*{corollary*}{Corollary}
\newtheorem{definition}{Definition}[section]
\newtheorem*{definitions*}{Definitions}
\newtheorem*{conjecture*}{\bf Conjecture}
\newtheorem*{example*}{\bf Example}
\theoremstyle{remark}
\newtheorem{remark}{\bf Remark}[section]
\numberwithin{equation}{section}
\begin{document}

\title{On the mean field limit for Brownian particles with Coulomb interaction in 3D}

\author[1]{Lei Li\thanks{leili2010@sjtu.edu.cn}}
\author[2]{Jian-Guo Liu\thanks{jliu@phy.duke.edu}}
\author[3]{Pu Yu\thanks{qinghefeng@pku.edu.cn}}
\affil[1]{School of Mathematical Sciences, Institute of Natural Sciences, MOE-LSC, Shanghai Jiao Tong University, Shanghai, 200240, P. R. China.}
\affil[2]{Department of Mathematics and Department of Physics, Duke University, Durham, NC 27708, USA.}
\affil[3]{School of Mathematical Sciences, Peking University, Beijing, 100871, P. R. China.}

\date{}
\maketitle

\begin{abstract}
In this paper, we consider the mean field limit of Brownian particles with Coulomb repulsion in 3D space using compactness. Using a symmetrization technique, we are able to control the singularity and prove that the limit measure almost surely is a weak solution to the limiting nonlinear Fokker-Planck equation. Moreover, by proving that the energy almost surely is bounded by the initial energy, we improve the regularity of the weak solutions. By a natural assumption, we also establish the weak-strong uniqueness principle, which is closely related to the propagation of chaos.
\end{abstract}

\section{Introduction}

There are many phenomena in natural and social sciences that are related to interacting particles \cite{vicsek1995novel,cucker2007emergent,horstmann03,liuxin2000}.  An effective method for studying these large and complex systems where small individuals interact with each other is  the mean field approximation \cite{stanley1971, georges1996, lasry2007, jabin2017}. In this approximation, the effect of surrounding particles is approximated by a consistent averaged force field so that we have a one body problem. The mean field approximation naturally applies to 
the kinetic theory where the macroscopic properties of gases are studied \cite{grad1949,kac1956,jabin2014,serfaty2018mean}. A desired property in the mean field limit is the so-called ``propagation of chaos" \cite{kac1956,mckean1967,meleard1987,sznitman1991}.
Roughly speaking, starting with a chaotic initial configuration where the particles are from independent copies of the initial state,  the statistical correlation between finite groups of particles vanishes at a later (fixed) time as the number of particles goes to infinity. In other words,  the particles reduce to independent copies of nonlinear Markov processes. 

In this paper, we are interested in the mean field limit of Brownian particles with Coulomb interaction in three dimensional space. More precisely, we consider the $N$ particle system
\begin{gather}\label{model}
dX_t^{i,N}=-\frac{1}{N}\nabla_{x_i}\cH(X_t^{1,N},\ldots, X_t^{N,N})\,dt+\sqrt{2}\,dB_i^N,~i=1,\ldots, N
\end{gather}
where
\begin{gather}
\cH(x_1,\ldots, x_N)=\sum_{i,j: i<j}g(x_i-x_j)=\frac{1}{2}\sum_{i,j: i\neq j}g(x_i-x_j),
\end{gather}
with $g$ being the interaction potential. This system can be regarded as the overdamped limit of Langevin equations \cite{coffey2012langevin} so that $dX$'s are from the friction terms while $\frac{1}{N}\nabla_{x_i}\cH$ can be thought as the interacting forces. Hence, in the remaining part of the paper, we will call this the ``interacting forces'', though it may have other interpretations in some applications. The scaling $1/N$ appears because we desire to have a total mass or charge to be $O(1)$ so that there is a mean field limit as $N\to \infty$. The Brownian motion is not scaled since the strength is determined by the temperature instead of the mass of the particle. Of course, if one desires to consider other scaling regimes, there may be factors depending on $N$ for both terms.
The initial values $\{X_0^{i,N}\}$ are i.i.d from some given density $\rho_0$. Also, $\{B_i^N\}_{i=1}^N$ are independent $d$-dimensional standard Brownian motions.
Hence, the $N$-particle system is totally determined by $(X_0^{1,N},\ldots, X_0^{N,N}, B_1^N,\ldots, B_N^N)$. It is standard by Kolmogorov extension theorem \cite{tao2011,durrett2010} that there exists a probability space $(\Omega, \mathcal{F}, \bbP)$ so that all the random variables $\{(X_0^{1,N},\ldots, X_0^{N,N}, B_1^N,\ldots, B_N^N)\}_{N=1}^{\infty}$ are on this probability space and they are all independent.
Clearly, if we identify $B_i^N$ for different $N$'s the law of $X_t^{i,N}$ is unchanged. Hence, we will drop the index $N$ for the Brownian motions from now on. Moreover, we use $\bbE$ to mean the expectation under $\bbP$.  If the interaction potential is given by
\begin{gather}
g(x)=
\begin{cases}
-\frac{1}{2\pi}\log |x|, & d=2 ,\\
C_d|x|^{-(d-2)}, & d\ge 3,
\end{cases}
\end{gather}
where 
\begin{gather}
C_d=\frac{1}{d(d-2)\pi}\left(\frac{\Gamma(d)}{\Gamma(\frac{d}{2})}\right)^{\frac{2}{d}}
\end{gather}
is chosen such that $-\Delta g=\delta_0$ holds in the distributional sense, then the interaction is called Coulomb repulsive interaction. Moreover, we define the Coulomb repulsive force as 
\begin{gather}
F(x)=-\nabla g(x)= (d-2) C_d \frac{x}{|x|^{d}}.
\end{gather}
We will particularly focus on $d=3$ case but some discussions are performed for general $d$.

Our goal is to show that as $N\to \infty$, the empirical measure
\begin{gather}\label{eq:empirical}
\mu^N=\frac{1}{N}\sum_{i=1}^N\delta_{X^{i,N}}
\end{gather}
(as a random variable taking values in $\bfP(C([0, T]; \bbR^d))$) converges in law to a random measure $\mu$, whose density is almost surely a  solution of the nonlinear Fokker-Planck equation
\begin{equation}\label{eq:pnp}
    \partial_t\rho=\Delta\rho+\nabla\cdot(\rho\nabla(g*\rho)), ~~\rho_t|_{t=0}=\rho_0
\end{equation}
provided that the initial empirical measure converges weakly to the initial data $\rho_0$.  The meaning of ``solution" here is the weak solution, which will be clarified later (Definition \ref{weaksol:def}).
In fact, we have the following result.
\begin{theorem*}[Informal version of Theorem \ref{asweak2}]
When $d=3$, under suitable conditions of the initial data $\rho_0$, any limit point $\mu$ of the empirical measure $\mu^N$ under the topology of convergence in law in $\bfP(C([0,T];\bbR^d))$ has a density $\rho$ a.s., and $\rho$ is a weak solution to \eqref{eq:pnp} a.s. in the sense of Definition \ref{weaksol:def}.
\end{theorem*}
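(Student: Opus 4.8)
The plan is to characterize limit points through a weak (martingale) formulation for the empirical measure and then pass to the limit, using symmetrization to tame the singular drift. First I would apply It\^o's formula to $\varphi(X_t^{i,N})$ for a fixed test function $\varphi\in C_c^\infty(\bbR^3)$, sum over $i$, and divide by $N$. Writing $F=-\nabla g$, this yields
$$\langle\mu_t^N,\varphi\rangle=\langle\mu_0^N,\varphi\rangle+\int_0^t\langle\mu_s^N,\Delta\varphi\rangle\,ds+\int_0^t\iint_{x\neq y}\nabla\varphi(x)\cdot F(x-y)\,d\mu_s^N(x)\,d\mu_s^N(y)\,ds+M_t^{N,\varphi},$$
where $M_t^{N,\varphi}=\frac{\sqrt2}{N}\sum_i\int_0^t\nabla\varphi(X_s^{i,N})\cdot dB_s^i$ is a martingale with quadratic variation of order $1/N$. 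The crucial step is to symmetrize the drift term: swapping $i\leftrightarrow j$ and using $F(-z)=-F(z)$ replaces $\nabla\varphi(x)\cdot F(x-y)$ by the symmetric kernel $G_\varphi(x,y)=\tfrac12(\nabla\varphi(x)-\nabla\varphi(y))\cdot F(x-y)$. Since $|F(z)|\le C|z|^{-2}$ in 3D while $|\nabla\varphi(x)-\nabla\varphi(y)|\le\|D^2\varphi\|_\infty|x-y|$, we obtain $|G_\varphi(x,y)|\le C|x-y|^{-1}$, reducing the singularity from $|x-y|^{-2}$ to the locally integrable $|x-y|^{-1}$, which is exactly the order of the interaction potential $g$.

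Next I would establish a uniform-in-$N$ bound on the interaction energy, which is what controls $G_\varphi$. The joint density solves the Liouville--Fokker--Planck equation, a gradient flow of the free energy $\mathcal{E}_N=\int\rho_N\log\rho_N+\frac1N\int\cH\rho_N$, whose dissipation gives $\mathcal{E}_N(t)\le\mathcal{E}_N(0)=O(N)$. Since the Coulomb potential is nonnegative and the entropy is bounded below under a second-moment bound (propagated from $\rho_0$), this yields
$$\sup_{t\le T}\bbE\left[\frac{1}{N^2}\sum_{i\neq j}g(X_t^{i,N}-X_t^{j,N})\right]=\sup_{t\le T}\bbE\iint_{x\neq y}g\,d\mu_t^N\,d\mu_t^N\le C,$$
uniformly in $N$, so that $\sup_t\bbE\iint|G_\varphi|\,d\mu_t^N\,d\mu_t^N\le C$. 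Combining this with the propagated moment bound and the Aldous criterion gives tightness of the laws of $\mu^N$ in $\bfP(C([0,T];\bbR^3))$, so limit points exist; by Skorokhod's representation I may assume $\mu^N\to\mu$ almost surely along a subsequence.

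The heart of the argument is passing to the limit in the singular nonlinear term. For $\nu\in\bfP(C([0,T];\bbR^3))$ set $\Phi_t^\varphi(\nu)=\langle\nu_t,\varphi\rangle-\langle\nu_0,\varphi\rangle-\int_0^t\langle\nu_s,\Delta\varphi\rangle\,ds-\int_0^t\iint G_\varphi\,d\nu_s\,d\nu_s\,ds$, so that $\Phi_t^\varphi(\mu^N)=M_t^{N,\varphi}\to0$ in $L^2(\bbP)$. Because $G_\varphi$ is unbounded on the diagonal, $\Phi_t^\varphi$ is not weakly continuous, so I would truncate: with $G_\varphi^\e=G_\varphi\,\chi(|x-y|/\e)$ for a smooth cutoff vanishing near the origin, the functional $\Phi_t^{\varphi,\e}$ is bounded and continuous and passes to the limit under the a.s.\ convergence $\mu^N\to\mu$. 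It then remains to control the error $\int_0^t\iint(G_\varphi-G_\varphi^\e)\,d\nu_s\,d\nu_s\,ds$, supported on $\{|x-y|\lesssim\e\}$ and dominated by $C\int_0^t\iint_{|x-y|\le\e}|x-y|^{-1}\,d\nu_s\,d\nu_s\,ds$, both for the limit $\mu$ and uniformly in $N$. For the limit, finiteness of $\bbE\iint|x-y|^{-1}\,d\mu_s\,d\mu_s$ (inherited from the energy bound by lower semicontinuity) makes this vanish as $\e\to0$ by dominated convergence. The main obstacle is the uniform-in-$N$ error: a bare $L^1$ energy bound only gives uniform boundedness, not smallness of the near-diagonal mass. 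I expect this to be the crux, and I would resolve it by upgrading the energy estimate to a slightly superlinear bound (for instance on $\iint|x-y|^{-1-\delta}\,d\mu_s^N\,d\mu_s^N$, or via the Fisher-information/entropy-dissipation control of the marginals), providing the de la Vall\'ee Poussin uniform integrability needed to force $\limsup_N\bbE\int_0^t\iint_{|x-y|\le\e}|x-y|^{-1}\,d\mu_s^N\,d\mu_s^N\,ds\to0$ as $\e\to0$. Combining the three estimates gives $\bbE|\Phi_t^\varphi(\mu)|=0$ for every $\varphi$ and $t$, so $\mu$ almost surely satisfies the weak formulation of \eqref{eq:pnp}.

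Finally, to show $\mu$ has a density a.s., I would use the uniform entropy bound: subadditivity of entropy together with $\mathcal{E}_N(t)\le\mathcal{E}_N(0)=O(N)$ and nonnegativity of $g$ gives $\sup_{t}\, H(\rho_N^{(1)}(t))\le C$ for the one-particle marginal, and by lower semicontinuity of the entropy and a standard regularization argument for random measures one concludes that the time-marginals $\mu_t$ are absolutely continuous for a.e.\ $t$ almost surely, yielding the density $\rho_t$. Rewriting the nonlinear term as $-\int\rho_s\,\nabla(g*\rho_s)\cdot\nabla\varphi$ then recovers the weak formulation of Definition \ref{weaksol:def}, completing the identification. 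The only genuinely delicate point throughout is the diagonal control in the nonlinear term; once the symmetrized kernel $G_\varphi$ and the energy estimate are in hand, the vanishing of the martingale, the convergence of the linear terms, and the tightness are all routine.
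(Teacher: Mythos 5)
Your proposal is correct and follows essentially the same route as the paper's proof of Theorem \ref{asweak2} (via Proposition \ref{asweak1}): symmetrize the drift so the kernel $\tfrac12(\nabla\varphi(x)-\nabla\varphi(y))\cdot F(x-y)$ is only $O(|x-y|^{-1})$, truncate near the diagonal, pass to the limit in law in the truncated functional, kill the martingale in $L^2$, and control the near-diagonal error both for the limit and uniformly in $N$. The ``upgrade'' you anticipate needing at the crux is exactly the paper's Lemma \ref{fisherIII}, namely the quantitative estimate $\int_{|x-y|<\epsilon}|x-y|^{-\gamma}F\,dxdy\le C\epsilon^{d\beta-\gamma}I(F)^{d\beta/2}$ with $d=3$, $\gamma=1$, $\tfrac13<\beta<\tfrac23$, applied with the uniform bound $\sup_N\int_0^T I_2(f_t^{(2),N})\,dt<\infty$ obtained from entropy dissipation of the $N$-particle system, so your fallback ``via the Fisher-information/entropy-dissipation control of the marginals'' is precisely how the paper closes the argument (and likewise yields the a.s.\ existence of the density via $\bbE\int_0^T I_1(\mu_t)\,dt<\infty$, where the paper uses Fisher information rather than your entropy variant).
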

If the solution $\rho$ is proved to be unique, so that the limit measure $\mu$ is deterministic, then we have the propagation of chaos (see \cite[Proposition 2.2]{sznitman1991}).  However, the regularity of the weak solution in Definition \ref{weaksol:def} is limited and it is very challenging to show the uniqueness of the weak solutions under these conditions even though the initial data $\rho_0$ is very good. On the other hand, it is standard to show that if $\rho_0\ge 0$, $\rho_0\in L^1(\mathbb{R}^d)\cap H^{m}(\mathbb{R}^d)$ with some $m>d/2$, the equation has a unique global strong solution $\rho\in C([0,T];L^1(\bbR^d))\cap C([0,T];H^m(\bbR^d))$ with $\rho\ge 0$. Moreover, $\|\rho\|_1=\|\rho_0\|_1$ and $\rho\in C^{\infty}((0,\infty), H^s)$ for all $s\ge 0$. See Appendix \ref{app:strongsoln} (Proposition \ref{pro:strongunique}--\ref{pro:nonblowup}) for reference. Hence, one desires to improve the regularity of the weak solutions so that one can eventually show that the "weak solution" by the limit measure is the same as the strong solution, which is one common way to establish the propagation of chaos.  This is known to be the "weak-strong uniqueness principle" \cite{brenier2011,feireisl2012,duerinckx2016,fischer2017}.

As a second main result of this work, we show that the energy almost surely is bounded by the initial energy so that we can improve the regularity of the limiting weak solutions (see Proposition \ref{pro:uniformboundenergy} and \ref{pro:improvedweak}). Together with this, the extra assumption that the weak solution $\rho\in L_{\loc}^2([0, T]; L^2(\bbR^3))$ can imply the weak-strong uniqueness principle.  

\begin{theorem*}[Informal version of Proposition \ref{pro:uniformboundenergy} and Proposition \ref{prop:uniquenessresult}]
With suitable assumptions on the initial data $\rho_0$, for general dimension $d\ge 3$, for any limit point $\mu$ of $\mu^N$, the energy is bounded by the initial energy almost surely
\[
\iint_{\bbR^d\times\bbR^d} g(x-y)\mu_t(dx)\mu_t(dy)
\le \iint_{\bbR^d\times\bbR^d} g(x-y)\rho_0(x)\rho_0(y)\,dxdy.
\]
Consequently, the density $\rho\in L^{\infty}(0, T; H^{-1})$ and $\nabla (g* \rho)\in L^{\infty}(0, T; L^2)$ almost surely.
Lastly, for $d=3$, if one further assumes such a weak solution $\rho\in L_{\loc}^2(0, T; L^2(\bbR^3))$, then it must be the unique strong solution.
\end{theorem*}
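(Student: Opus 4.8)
The plan is to obtain the energy inequality first for the $N$-particle system and then pass to the limit. Applying It\^o's formula to $\cH(X_t^{1,N},\dots,X_t^{N,N})$ along \eqref{model}, the drift contributes the sign-definite term $-\frac1N\sum_i|\nabla_{x_i}\cH|^2\,dt\le 0$, while the second-order It\^o correction is $\sum_i\Delta_{x_i}\cH\,dt$. Since $-\Delta g=\delta_0$ makes $g$ harmonic on $\bbR^d\setminus\{0\}$, and the repulsive interaction guarantees that almost surely no two particles ever collide (so that $\cH(X_t)$ is finite and $C^2$ along trajectories), this correction vanishes. Writing $E^N(t):=\frac{2}{N^2}\cH(X_t)=\iint_{x\ne y}g(x-y)\mu^N_t(dx)\mu^N_t(dy)$, one gets the pathwise identity
\[
\cH(X_t)+\frac1N\int_0^t\sum_i|\nabla_{x_i}\cH|^2\,ds=\cH(X_0)+\sqrt2\int_0^t\sum_i\nabla_{x_i}\cH\cdot dB_i ,
\]
whose expectation kills the martingale and yields $\bbE[E^N(t)]\le\bbE[E^N(0)]=\frac{N-1}{N}\iint g(x-y)\rho_0(x)\rho_0(y)\,dxdy$. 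Moreover the martingale remainder $R^N:=\sup_{t\le T}\frac{2\sqrt2}{N^2}\bigl|\int_0^t\sum_i\nabla_{x_i}\cH\cdot dB_i\bigr|$ has $\bbE[R^N]\to0$ by the Burkholder--Davis--Gundy inequality together with the expectation bound just obtained.

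To transfer the estimate to a limit point $\mu$ I would truncate the singularity. For $g_K:=\min(g,K)$, which is bounded and continuous, the diagonal contributes only $g_K(0)/N\to0$, so $\iint g_K\,\mu^N_t\mu^N_t\le E^N(t)+K/N\le E^N(0)+R^N+K/N$. Extracting a joint subsequential limit of $(\mu^N,E^N(0),R^N)$ and invoking Skorokhod's representation, I may assume almost sure convergence; then $E^N(0)\to\iint g\rho_0\rho_0$, the remainder limit is $0$ because $\bbE[R^N]\to0$, and $\nu\mapsto\iint g_K\,\nu_t\,\nu_t$ is weakly continuous on $\bfP(C([0,T];\bbR^d))$ (evaluation at $t$ is continuous and $g_K$ is bounded continuous). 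Passing to the limit and then letting $K\uparrow\infty$ by monotone convergence gives the almost-sure bound $\iint g\,\mu_t\mu_t\le\iint g\rho_0\rho_0$. I expect this step to be the main obstacle: one must keep the inequality in the correct (lower-semicontinuous) direction while controlling both the Coulomb singularity and the diagonal, and it is precisely the symmetrization/truncation machinery of the paper that makes it work.

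For the regularity statement, note that $-\Delta g=\delta_0$ gives $g*\rho=(-\Delta)^{-1}\rho$, hence
\[
\iint g(x-y)\rho(x)\rho(y)\,dxdy=\int\rho\,(g*\rho)\,dx=\|\nabla(g*\rho)\|_{L^2}^2=\|\rho\|_{\dot{H}^{-1}}^2 .
\]
Thus the uniform-in-time energy bound is literally a uniform $\dot{H}^{-1}$ bound, yielding $\nabla(g*\rho)\in L^\infty(0,T;L^2)$ and, since $\|\cdot\|_{H^{-1}}\le\|\cdot\|_{\dot{H}^{-1}}$, also $\rho\in L^\infty(0,T;H^{-1})$ almost surely.

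For the weak--strong uniqueness in $d=3$, let $\rho$ be the weak solution (with the energy bound and the extra hypothesis $\rho\in L^2_{\loc}(0,T;L^2)$) and $\bar\rho$ the unique strong solution from the same $\rho_0$, which is smooth with $\nabla(g*\bar\rho)\in L^\infty$ for positive times. Set $w=\rho-\bar\rho$ and $\psi=g*w=(-\Delta)^{-1}w$, so that $w=-\Delta\psi$ and $\|w\|_{\dot{H}^{-1}}^2=\|\nabla\psi\|_{L^2}^2$. Subtracting the two copies of \eqref{eq:pnp}, writing $\rho\nabla(g*\rho)-\bar\rho\nabla(g*\bar\rho)=\rho\nabla\psi+w\nabla(g*\bar\rho)$, and testing against $\psi$ gives
\[
\frac{d}{dt}\,\tfrac12\|w\|_{\dot{H}^{-1}}^2=-\|w\|_{L^2}^2-\int\rho\,|\nabla\psi|^2\,dx-\int w\,\nabla\psi\cdot\nabla(g*\bar\rho)\,dx ,
\]
where the first two terms are nonpositive, the second using $\rho\ge0$ (the repulsive sign). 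Bounding the cross term by $\|\nabla(g*\bar\rho)\|_{L^\infty}\|w\|_{L^2}\|w\|_{\dot{H}^{-1}}\le\tfrac12\|w\|_{L^2}^2+\tfrac12\|\nabla(g*\bar\rho)\|_{L^\infty}^2\|w\|_{\dot{H}^{-1}}^2$ absorbs the diffusion term and leaves
\[
\frac{d}{dt}\|w\|_{\dot{H}^{-1}}^2\le\|\nabla(g*\bar\rho)\|_{L^\infty}^2\,\|w\|_{\dot{H}^{-1}}^2 .
\]
Since $w(0)=0$ and $\int_0^T\|\nabla(g*\bar\rho)\|_{L^\infty}^2\,dt<\infty$, Gr\"onwall forces $w\equiv0$, i.e.\ $\rho=\bar\rho$. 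The assumption $\rho\in L^2_{\loc}(0,T;L^2)$ is exactly what renders the diffusion term $\|w\|_{L^2}^2$ (and the whole identity) meaningful; the delicate point here is to justify this computation rigorously from the weak formulation, using the time-dependent test function $(-\Delta)^{-1}w$ via a mollification/approximation argument and the integrability of $\|\nabla(g*\bar\rho)\|_{L^\infty}$ near $t=0$.
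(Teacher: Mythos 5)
Your first two parts are essentially the paper's own route. For the energy bound, the paper derives exactly your It\^{o} identity, but for the mollified system (equation \eqref{noncollision6}), then controls the martingale supremum by Doob's $L^2$ inequality to get an $O(N^{-1/2})$ error, and passes to the limit point via bounded continuous truncations of $g$, the Skorokhod representation theorem, Fatou, and monotone convergence --- the same scheme as your $g_K=\min(g,K)$ argument (the paper's truncation $g^{(m)}$ additionally yields lower semicontinuity of $\sup_{t}$ of the energy, which upgrades your fixed-$t$ bound to the a.s.\ uniform-in-$t$ statement of Proposition \ref{pro:uniformboundenergy}; since your $R^N$ is already a supremum over $t$, this is a cosmetic repair). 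Two technical points in your version need the localization that mollification supplies: It\^{o} applied directly to the singular $\cH$ requires stopping before near-collisions, and your appeal to BDG to get $\bbE[R^N]\to 0$ uses $\bbE\langle M\rangle_T<\infty$, which itself comes from the (localized) expectation identity, so the argument must be run with stopping times first. The $H^{-1}$ consequence is identical to the paper's. For uniqueness, you use the same Coulomb/$\dot H^{-1}$ energy functional and the same three-term splitting as the paper; the one genuine difference is the cross term, which you absorb by Cauchy--Schwarz with $\|\nabla h_2\|_{\infty}$ against the dissipation $-\|w\|_{L^2}^2$, whereas the paper integrates by parts through the stress tensor $\nabla\psi\otimes\nabla\psi-\frac12|\nabla\psi|^2 I$ and pays $\|\nabla^2 h_2\|_{\infty}$, needing no $L^2$ control of $w$ at that point. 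Both are legitimate given the strong solution's smoothness; note however that in the paper the $L^2_{\loc}$ hypothesis is consumed earlier, in a nontrivial DiPerna--Lions renormalization step that propagates $L^p_{\loc}$ bounds and makes the differential identity rigorous on $[t_0,T]$ --- your ``mollification/approximation'' remark points at this but substantially understates it.

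The genuine gap is at $t=0$. Your Gr\"onwall argument starts from ``$w(0)=0$,'' but the differential inequality can only be justified on intervals $[t_0,T]$ with $t_0>0$ (all the propagated regularity is local in $(0,T)$), and the weak solution is \emph{not} known to be continuous at $t=0$ in $\dot H^{-1}$: the paper explicitly remarks after Proposition \ref{pro:improvedweak} that the mild formulation does not give continuity of $\rho(t)$ at $t=0$. Hence one must prove $\lim_{t_0\to 0}\|\nabla h(t_0)-\nabla h_2(t_0)\|_{L^2}=0$, and this is precisely where the hypothesis $\sup_{t}\mathscr{E}(t)\le\mathcal{E}(\rho_0)$ --- which your uniqueness sketch never invokes, having used the energy bound only for regularity --- is essential. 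The paper expands $\|\nabla h(t_0)-\nabla h_2(t_0)\|_{L^2}^2=\mathscr{E}(t_0)-2\int h_2(t_0)\rho(t_0)\,dx+\mathscr{E}_2(t_0)$; the cross term tends to $2\mathcal{E}(\rho_0)$ by the weak-$*$ continuity $\rho\,dx\in C([0,T];C_b(\bbR^3)')$, $\mathscr{E}_2(t_0)\to\mathcal{E}(\rho_0)$ by continuity of the strong solution, and $\limsup_{t_0\to0}\mathscr{E}(t_0)\le\mathcal{E}(\rho_0)$ by the energy bound, which forces the limit to be $\le 0$. Without this input the $\dot H^{-1}$ distance could jump upward at $0^+$ and Gr\"onwall does not close; so this is a missing idea in your proposal, not a routine approximation detail.
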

We will explain in Section \ref{sec:3dchaos} that the $L_{\loc}^2(0, T; L^2(\bbR^3))$ assumption makes perfect physical sense due to the energy dissipation. However, rigorous justification is not easy.
Combining these two results, we obtain a condition for the propagation of chaos.
\begin{theorem*}[Informal version of Theorem \ref{thm:propagationchaos}]
 Consider $d=3$. With suitable assumptions on the initial data $\rho_0$, if the density $\rho$ for any limit point $\mu$ of the empirical measures $\mu^N$ satisfies $\mathbb{E}\int_0^T\int_{\bbR^3}\rho^2\,dxdt<\infty$, then there is propagation of chaos.
\end{theorem*}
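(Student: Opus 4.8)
The plan is to deduce the determinism of the limit from weak--strong uniqueness and then invoke the equivalence between chaos and convergence of the empirical measure to a constant. By Sznitman's characterization (see \cite[Proposition 2.2]{sznitman1991}), propagation of chaos is equivalent to the statement that the law of $\mu^N$ on $\bfP(C([0,T];\bbR^3))$ converges to a Dirac mass concentrated on a single \emph{deterministic} path measure $\mu^*$. Since tightness has already been established, so that limit points exist, it suffices to show that every limit point $\mu$ equals one and the same deterministic $\mu^*$ almost surely; the whole sequence then converges to $\mu^*$ in law.

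First I would use the hypothesis to upgrade the integrability of the limiting density. Because $\bbE\int_0^T\int_{\bbR^3}\rho^2\,dx\,dt<\infty$, the nonnegative random variable $\int_0^T\int_{\bbR^3}\rho^2\,dx\,dt$ is finite almost surely, so $\rho\in L^2(0,T;L^2(\bbR^3))\subset L^2_{\loc}(0,T;L^2(\bbR^3))$ a.s. Combining this with the first main result (that $\rho$ is a.s. a weak solution of \eqref{eq:pnp}) and the weak--strong uniqueness of Proposition \ref{prop:uniquenessresult}, I conclude that almost surely $\rho$ coincides with the unique global strong solution $\bar\rho$ determined by $\rho_0$. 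In particular the time-marginals $\mu_t$ carry the deterministic density $\bar\rho_t$ for all $t$, a.s.

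The next step is to pass from deterministic time-marginals to a deterministic path law. The limit $\mu$ is identified, as part of the compactness and identification argument, as a solution of the nonlinear martingale problem associated with \eqref{model}: under $\mu$ the canonical process satisfies $dX_t=-\nabla(g*\mu_t)(X_t)\,dt+\sqrt{2}\,dB_t$ with $X_0\sim\rho_0$ and $\mu_t$ its own time-$t$ marginal. Once $\mu_t$ is frozen to the deterministic $\bar\rho_t\,dx$, this becomes a \emph{linear} martingale problem with the fixed drift $b(t,x)=-\nabla(g*\bar\rho_t)(x)$. Since $\bar\rho$ is a strong solution it is smooth for $t>0$ and satisfies $-\Delta(g*\bar\rho_t)=\bar\rho_t$, so $b(t,\cdot)$ is locally Lipschitz with controlled growth away from $t=0$; uniqueness for this linear problem then forces $\mu$ to be the deterministic law of the associated ordinary SDE. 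Thus every limit point equals this single $\mu^*$, and propagation of chaos follows.

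The main obstacle I anticipate is precisely the last step: establishing well-posedness, and in particular uniqueness, of the linear martingale problem with drift $-\nabla(g*\bar\rho_t)$, whose regularity degenerates as $t\downarrow 0$ because $\bar\rho_t$ is only in $H^m$ at $t=0$ rather than smooth. I would handle this by exploiting the instantaneous parabolic smoothing (so that $b$ is smooth on $(\delta,T]$ for every $\delta>0$), controlling the behavior on the initial layer $[0,\delta]$ through the finite-energy and $L^2$ bounds, and patching the two regimes; a Girsanov- or Gronwall-type estimate should then yield uniqueness of the path law. The remaining ingredients---tightness, identification of the nonlinear martingale problem, and the weak--strong uniqueness---are supplied by the earlier results in the paper.
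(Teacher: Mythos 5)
Your first half coincides exactly with the paper's own proof of Theorem \ref{thm:propagationchaos}: tightness (Proposition \ref{tightI}) yields subsequential limits in law; the hypothesis $\bbE\int_0^T\int_{\bbR^3}\rho^2\,dxdt<\infty$ gives $\rho\in L^2(0,T;L^2)$ almost surely; together with Theorem \ref{asweak2}, the energy bound of Proposition \ref{pro:uniformboundenergy} and the Fisher information bound of Proposition \ref{tightII}(iv), the hypotheses of Proposition \ref{prop:uniquenessresult} hold a.s., so $\rho=\bar\rho$ is the unique strong solution; every limit point is then the same deterministic measure, and convergence in law to a constant upgrades to convergence in probability, which is Sznitman's condition (iii). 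Up to this point your argument is the paper's argument.

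The genuine gap is your step passing from deterministic time marginals to a deterministic path law via the martingale problem. You assert that the "identification of the nonlinear martingale problem" is "supplied by the earlier results in the paper"; it is not. This paper never formulates or solves any martingale problem in 3D --- the self-consistent martingale problem is the $d=2$ method of \cite{liuyang2016}, and the entire purpose of the symmetrization in Proposition \ref{asweak1} is to avoid it: the unsymmetrized term $\int_0^t\nabla\phi(X_s)\cdot F(X_s-Y_s)\,ds$, which is precisely what the martingale problem formulation requires, carries a singularity of order $d-1=2$, and this is the borderline case where the cutoff estimate \eqref{fisher:exp2} gives a factor $\epsilon^{3\beta-2}$ with the constraint $\beta\le 2/3$, i.e.\ no decay as $\epsilon\to0$; part 2 of Lemma \ref{fisherIII} only bounds the $\gamma=2$ integral, it does not make the near-diagonal contribution vanish. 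So the identification you invoke would require genuinely new estimates, not a citation. Your fallback for the initial layer $[0,\delta]$ runs into the same wall: a Girsanov or Gronwall patching argument needs something like $\int_0^T\int_{\bbR^3}\rho|\nabla h|^2\,dxdt<\infty$, which is exactly the bound \eqref{eqrmk:need2} that the authors explicitly flag as seemingly difficult to justify. The paper itself takes none of this route: once $\rho=\bar\rho$ a.s., it directly identifies the limit with the deterministic $\bar\mu$ and concludes. To be fair, the marginal-law-versus-path-law distinction you are worrying about is real, and the paper glosses over it in asserting $\mu=\bar\mu$; but your proposed repair rests on an ingredient the paper does not prove and that its own estimates cannot deliver, so as written your proof does not close.
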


The tool we use to establish the above results is the compactness method based on entropy and Fisher information estimates. In \cite{liuyang2016}, the propagation of chaos result for $d=2$ case is proved using compactness through a self-consistent martingale problem. The proof needs to control singularity with $(d-1)$th order using Fisher information from \cite{fournier2014} (see Lemma \ref{fisherIII} for a slightly generalized form). However, the proof there cannot be applied directly to $d\ge 3$ cases. (As will be remarked in section \ref{sec:limitmeasure}, the compactness method based on Fisher information seems not to work for $d\ge 4$ cases, and new tools should be developed to tackle this problem.) What we do is to use certain symmetrization to reduce the singularity in the third term of \eqref{eq:pnp} from $d-1$ to $d-2$. Using this trick and the estimates of Fisher information, we show that the limit measure almost surely is a weak solution to the limiting nonlinear Fokker-Planck equation \eqref{eq:pnp} for  $d=3$ (Theorem \ref{asweak2}). This is the first main result in this work. As already mentioned, the weak-strong uniqueness principle is only established by assuming that the density of the limit measure almost surely is in $L_{\loc}^2(0, T; L^2(\bbR^3))$ (see Proposition \ref{prop:uniquenessresult}). Though physically significant, the justification seems hard.

Let us mention some related references, which by no means are exhaustive. 
In \cite{hauray2009,duerinckx2016,serfaty2018mean}, the mean field limit problems for particle systems without Brownian motions with various interaction kernels have been established. In particular, in \cite{serfaty2018mean}, Serfaty and Duerinckx established the results for particles with Coulomb interaction even for $d\ge 3$.  When Brownian motions are present, we have stochastic systems \cite{osada1987,cattiaux2008,fournier2014,godinho2015,liuyang2016,jabinquantitative,berman2016}. 
In \cite{cattiaux2008}, propagation of chaos was proved uniformly in time when the interaction kernel is regular enough and a confining potential is present. In \cite{liuyang2016}, the propagation of chaos for 2D Coulomb interaction was proved using nonlinear martingale problems. 
In \cite{jabinquantitative}, the propagation of chaos for $W^{-1,\infty}$ kernels has been established, and this include the kernels considered in \cite{fournier2014,liuyang2016}. By estimating the relative entropy, they found the convergence rate of propagation of chaos for some models. However, the 3D Coulomb kernel is not included in their model so their method does not apply.

The rest of the paper is organized as follows. In section \ref{sec:setup}, we review and prove some basic results for Fisher information of probability measures and $N$ particle systems. In particular, the empirical measures of the $N$ particle systems are tight so that any subsequence has a further converging subsequence to some limiting measure. Also, there are uniform estimates of the Fisher information. 
In section \ref{sec:limitmeasure}, using a symmetrization technique together with the Fisher information estimate, we show that the limit measure almost surely is a weak solution to the nonlinear Fokker-Planck equation \eqref{eq:pnp}. In section \ref{sec:3dchaos}, we establish the weak-strong uniqueness principle based on the assumption $\rho\in L_{\loc}^2(0, T; L^2(\bbR^3))$, and remark on the propagation of chaos.
In appendices \ref{app:strongsoln} and \ref{app:missingpf}, we provide the notes for strong solutions and missing proofs for reference.

\section{Setup and existing results}\label{sec:setup}
In this section, we first recall some basic properties of Fisher information and extend the estimates in \cite{fournier2014} to high-dimensional cases. Then we give an alternative proof for the well-posedness of the system \eqref{model}. Finally we present the results of tightness of the empirical measures in \cite{liuyang2016}.

\subsection{Entropy and Fisher information of probability measures}
We begin with the definition of Fisher information. For any probability measure $f\in\mathbf{P}((\bbR^d)^k)$, we recall that the entropy and Fisher information are defined respectively by
\begin{gather}\label{eq:entropy}
H(f):=
\begin{cases}
\int_{\bbR^{kd}}\rho\log\rho\,dx,&\text{if }f=\rho\,dx,\\
+\infty &\text{otherwise};
\end{cases}
I(f):=
\begin{cases}
\int_{\bbR^{kd}}\frac{|\nabla\rho|^2}{\rho} dx,&\text{if }f=\rho\,dx,\\
+\infty &\text{otherwise}.
\end{cases}
\end{gather}
We also introduce the normalized entropy and Fisher information for $f\in\bfP((\bbR^d)^k)$:
\begin{gather}\label{eq:normalizedentropy}
H_k(f):=\frac{1}{k}H(f), ~~
I_k(f):=\frac{1}{k}I(f).
\end{gather}
The normalized version is introduced so that $H_k(f^{\otimes k})=H_1(f)$ and $I_k(f^{\otimes k})=I_1(f)$ for $f\in\mathbf{P}(\bbR^d)$, which is convenient for the mean field limit discussion. We remark that the notations we use here are different from those in \cite{hauray2014}, where they use $H$ to mean the normalized version while $H_j$ is the unnormalized version. In the following discussion, we sometimes use $I_k(\rho)$ and $H_k(\rho)$ to represent $I_k(f)$ and $H_k(f)$ (or $I(\rho), H(\rho)$ to represent $I(f)$ and $H(f)$).

We denote the set of all symmetric probability measures on $(\bbR^d)^k$ by $\bfP_{sym}((\bbR^d)^k)$. By "symmetric",  we mean the measure stays unchanged under the pushforward corresponding to any permutation of the $k$ copies of $\bbR^d$. If the distribution of some $k$ particle system is symmetric for all time, then the system is said to be exchangeable. Recall that for the joint probability distribution $F\in \bfP(\mathcal{X}^k)$ of $k$ random variables taking values in $\mathcal{X}$, the marginal distribution of $X^{i_1},\ldots, X^{i_r}$ with $\{i_1,\ldots, i_r\}\subset \{1,\ldots, N\}$ is defined by 
\begin{gather}
F_{i_1,\cdots, i_r}:=\int_{\mathcal{X}^{k-r}}F dx^{i_{r+1}}\ldots dx^{i_{k}},
\end{gather}
where $\{i_{r+1},\ldots, i_k\}=\{1,\ldots, N\}\setminus \{i_1, \ldots, i_r\}$. If $F$ is symmetric, the marginal distributions are the same for different choices of $i_1,\ldots, i_r$ and this will be called the $r$-marginal distribution later, denoted by $F^{(r)}$.

Below, we list out some standard properties of the entropy and Fisher information.
\begin{lemma}[\cite{hauray2014}, Lemma 3.3]\label{entropy1}
We have the following super-additivity of entropy:
\begin{enumerate}
\item Suppose all the one marginal distributions of $F\in\bfP((\bbR^d)^k)$ are the same, denoted by $f\in\bfP(\bbR^d)$. If $H_1(f)<\infty$, then $H_k(F)\ge H_1(f)$. The equality holds if and only if $F=f^{\otimes k}$.

\item Consider $F\in\bfP((\bbR^d)^k)$. Then the un-normalized entropies satisfy 
\begin{gather}
H(F)\ge H(F_{i_1,\cdots, i_r})+H(F_{i_{r+1},\cdots, i_k}).
\end{gather}
 The equality holds if and only if $F=F_{i_1,\cdots, i_r} \otimes F_{i_{r+1},\cdots, i_k}$ where $r \in\{ 1, ..., k-1\}$.
\end{enumerate}
\end{lemma}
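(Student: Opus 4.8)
The plan is to reduce everything to the non-negativity of relative entropy (Gibbs' inequality), which is where all the content sits. For two probability densities $\rho$ and $q$ on a common Euclidean space, Jensen's inequality applied to the convex function $-\log$ gives
$$
\int \rho\log\frac{\rho}{q}\,dx = -\int\rho\log\frac{q}{\rho}\,dx \ge -\log\int q\,dx = 0,
$$
with equality if and only if $\rho = q$ almost everywhere. This single fact drives both parts.

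For part (2), I would write the variables of $(\bbR^d)^k$ as $(x,y)$, where $x$ collects the coordinates indexed by $i_1,\dots,i_r$ and $y$ the coordinates indexed by $i_{r+1},\dots,i_k$. Assuming $F=\rho\,dx$ has a density (otherwise $H(F)=+\infty$ and the inequality is trivial), the marginals $F_{i_1,\dots,i_r}$ and $F_{i_{r+1},\dots,i_k}$ have densities $\rho_1(x)=\int\rho\,dy$ and $\rho_2(y)=\int\rho\,dx$. Taking $q=\rho_1\otimes\rho_2$ (a probability density, since each factor integrates to $1$) and using Fubini to collapse the cross terms, I would obtain the identity
$$
\int \rho(x,y)\log\frac{\rho(x,y)}{\rho_1(x)\rho_2(y)}\,dx\,dy = H(F) - H(F_{i_1,\dots,i_r}) - H(F_{i_{r+1},\dots,i_k}).
$$
Gibbs' inequality makes the left-hand side nonnegative, which is exactly the claimed super-additivity, and its equality case forces $\rho=\rho_1\otimes\rho_2$, i.e. the asserted factorization $F = F_{i_1,\dots,i_r}\otimes F_{i_{r+1},\dots,i_k}$.

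Part (1) then follows by iterating part (2) to peel off one coordinate at a time: applying the binary split that isolates the last coordinate and inducting on $k$ gives $H(F)\ge\sum_{i=1}^k H(F_i)$, where $F_i$ denotes the $i$-th one-marginal (the one-marginals of a marginal are again one-marginals of $F$, so the induction hypothesis applies). Since all one-marginals equal $f$ and $H_1(f)=H(f)$, this reads $H(F)\ge kH(f)$, i.e. $H_k(F)\ge H_1(f)$. If equality holds, it must hold at every step of the iteration, so the equality case of part (2) forces complete factorization $F=\bigotimes_{i=1}^k F_i=f^{\otimes k}$.

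The main obstacle is purely the integrability bookkeeping needed to make the decomposition identity legitimate, since the entropy functional can a priori equal $+\infty$ (and, without moment control, be unbounded below), so one must avoid an ill-defined $\infty-\infty$. The clean way is to treat $\int\rho\log(\rho/q)\ge 0$ as the primary inequality—it is always meaningful with values in $[0,+\infty]$—and to split it into the three separate entropy terms only once all of them are known to be finite, dispatching the singular and infinite-entropy cases by hand. The hypothesis $H_1(f)<\infty$ in part (1) is exactly what guarantees the relevant one-marginal entropies are finite so that this splitting is valid.
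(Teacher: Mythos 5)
Your proof is correct, and its engine is exactly the paper's: the paper's entire proof consists of the Gibbs/Jensen inequality $\int_E g\log g\,dx\ge\int_E g\log h\,dx$ (equality iff $g=h$ a.e.) applied with $g=F$. For part (2) your argument coincides with the paper's choice $h=F_{i_1,\dots,i_r}\otimes F_{i_{r+1},\dots,i_k}$: writing the relative entropy of $F$ against the product of its two marginals and collapsing the cross terms by Fubini is the same computation, phrased through relative entropy rather than through the inequality $\int g\log g\ge\int g\log h$ directly. Where you genuinely diverge is part (1): the paper proves it in one shot by taking $h=f^{\otimes k}$, so that $\int F\log f^{\otimes k}=\sum_{i=1}^k\int F\log f(x_i)=kH(f)$ because every one-marginal is $f$, and the equality case $F=f^{\otimes k}$ drops out of a \emph{single} application of Jensen; you instead obtain part (1) by induction, iterating the binary split of part (2) to peel off one coordinate at a time. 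Your route buys the more general super-additivity $H(F)\ge\sum_{i=1}^k H(F_i)$ without assuming identical one-marginals, but it costs a slightly more delicate equality analysis: the chain of equalities is only legitimate once the intermediate marginal entropies are known finite, which your hypotheses do deliver (under equality, $(k-1)H(f)\le H(F_{1,\dots,k-1})\le H(F)-H(f)=(k-1)H(f)$ when $H_1(f)$ is finite, so the induction closes), whereas the paper's direct choice of $h$ never introduces intermediate quantities at all. Your closing remarks on the $\infty-\infty$ bookkeeping — keeping the relative entropy, valued in $[0,+\infty]$, as the primary object and splitting it into separate entropies only when all three are finite — are in fact more careful than the paper's two-line sketch, which silently assumes those splits are valid.
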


\begin{proof}
We briefly give the proof below. By Jensen's inequality, we have $\int_E g\log \frac{h}{g}\,dx\le 0$ for any probability densities $g,h$ on a Polish space $E$. Hence,
\begin{equation}\label{hauary3.3}
  \int_E g\log g\,dx\ge \int_E g\log h\,dx.
\end{equation}
The equality holds if and only if $g=h$, a.e.. Now we take $E=(\bbR^d)^k$. The first part of the claim follows by taking $g=F$ and $h=f^{\otimes k}$ in \eqref{hauary3.3}. And the second part follows by taking $g=F$ and $h=F_{i_1,\cdots, i_r} \otimes F_{i_{r+1},\cdots, i_k}$.
\end{proof}

\begin{lemma}[{\cite[Theorem 3]{carlen1991}}]\label{fisherI}
Suppose $F\in\bfP((\bbR^d)^k)$. Then the non-normalized Fisher information satisfies
\begin{equation}
    I(F)\ge I(F_{i_1,\cdots, i_r})+I(F_{i_{r+1},\cdots, i_k}).
\end{equation}
The equality holds if and only if $F=F_{i_1,\cdots, i_r} \otimes F_{i_{r+1},\cdots, i_k}$.
\end{lemma}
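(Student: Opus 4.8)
The plan is to establish the superadditivity directly from the definition of Fisher information, exploiting the orthogonal splitting of the gradient together with a single application of the Cauchy--Schwarz inequality, and then to read off the equality case from the equality case of Cauchy--Schwarz. First I would dispose of the trivial situation: if $F$ is not absolutely continuous, or if $I(F)=+\infty$, the asserted inequality holds vacuously, so I may assume $I(F)<\infty$, in which case $F$ has a density (still written $F$) with $\sqrt F\in H^1((\bbR^d)^k)$, and the two marginals are absolutely continuous as well. Relabelling coordinates, write a point of $(\bbR^d)^k$ as $(y,z)$ with $y\in(\bbR^d)^r$ carrying the indices $i_1,\dots,i_r$ and $z\in(\bbR^d)^{k-r}$ carrying the rest, and set $F_1(y)=\int F(y,z)\,dz$, $F_2(z)=\int F(y,z)\,dy$. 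Since $\nabla F=(\nabla_y F,\nabla_z F)$, the integrand splits orthogonally,
\[
\frac{|\nabla F|^2}{F}=\frac{|\nabla_y F|^2}{F}+\frac{|\nabla_z F|^2}{F},
\]
so $I(F)$ is the sum of the two partial contributions, and it suffices to bound each by the corresponding marginal Fisher information.

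The key step is a fibrewise Cauchy--Schwarz estimate. Using $\nabla_y F_1(y)=\int\nabla_y F(y,z)\,dz$ and factoring $\nabla_y F=\tfrac{\nabla_y F}{\sqrt F}\,\sqrt F$, Cauchy--Schwarz in the $z$ variable gives, for a.e.\ fixed $y$,
\[
|\nabla_y F_1(y)|^2\le\Big(\int\frac{|\nabla_y F(y,z)|^2}{F(y,z)}\,dz\Big)\Big(\int F(y,z)\,dz\Big)=F_1(y)\int\frac{|\nabla_y F(y,z)|^2}{F(y,z)}\,dz.
\]
Dividing by $F_1(y)$ and integrating in $y$ yields $I(F_1)\le\int\frac{|\nabla_y F|^2}{F}\,dy\,dz$, and the symmetric argument in the other block gives $I(F_2)\le\int\frac{|\nabla_z F|^2}{F}\,dy\,dz$. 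Adding the two bounds and invoking the orthogonal splitting above produces $I(F)\ge I(F_1)+I(F_2)$, which is the claimed inequality.

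For the equality case, equality in the fibrewise Cauchy--Schwarz forces $\nabla_y F(y,z)$ to be proportional, as a function of $z$, to $F(y,z)$; that is, $\nabla_y\log F(y,z)=c(y)$ is independent of $z$ for a.e.\ $y$, and symmetrically $\nabla_z\log F(y,z)$ is independent of $y$. Integrating these relations gives $\log F(y,z)=\phi(y)+\psi(z)$, hence $F(y,z)=a(y)b(z)$ for a.e.\ $(y,z)$; normalizing and comparing with the marginals forces $a\propto F_1$ and $b\propto F_2$, so $F=F_{i_1,\cdots,i_r}\otimes F_{i_{r+1},\cdots,i_k}$, exactly the stated condition. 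Conversely, for a product measure the bounds above are equalities by direct computation.

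I expect the only genuine obstacle to be the technical justification that the weak gradient of the marginal is obtained by differentiating under the integral, namely $\nabla_y F_1=\int\nabla_y F\,dz$; this requires the finiteness $I(F)<\infty$ together with a standard approximation and Fubini argument to guarantee $F\in W^{1,1}_{\loc}$ and that the partial integration is legitimate, as well as the measurability needed to run the fibrewise Cauchy--Schwarz for a.e.\ $y$. Once that regularity is secured, the Cauchy--Schwarz step and the equality analysis are routine, so the heart of the matter is purely the orthogonal decomposition of the gradient combined with one Cauchy--Schwarz inequality.
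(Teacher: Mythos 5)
Your proof is correct in substance, but there is no internal proof to compare it against: the paper states this lemma as a quotation of \cite[Theorem 3]{carlen1991} and does not prove it. Your route --- split $|\nabla F|^2/F$ orthogonally into the $y$- and $z$-blocks, then bound each marginal's Fisher information by the corresponding block via a fibrewise Cauchy--Schwarz --- is essentially the classical argument behind Carlen's theorem: your Cauchy--Schwarz step is equivalent to saying that the score of a marginal is the conditional expectation of the joint score, $\nabla_y\log F_1(y)=\bbE[\nabla_y\log F\mid y]$, together with the $L^2$-contractivity of conditional expectation. It is also the exact Fisher-information analogue of the short Jensen-inequality proof the paper \emph{does} give for the entropy counterpart (Lemma \ref{entropy1}), so your write-up makes the two superadditivity statements parallel, which the paper leaves implicit. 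The regularity point you flag is indeed routine: $I(F)<\infty$ gives $\nabla F=2\sqrt F\,\nabla\sqrt F\in L^1$ by Cauchy--Schwarz, so $F\in W^{1,1}$ and Fubini legitimizes $\nabla_y F_1=\int\nabla_y F\,dz$.

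Two points in your equality analysis deserve more care than you give them. First, $\log F$ is defined only on $\{F>0\}$, so ``integrating these relations'' to get $\log F=\phi(y)+\psi(z)$ is not legitimate as written if $F$ vanishes somewhere; a priori the positivity set need not be a product, and path-integration of $\nabla_y\log F=c(y)$ can be obstructed. The clean fix is distributional: integrating the relation $\nabla_y F=c(y)F$ in $z$ identifies $c=\nabla_y\log F_1$ on $\{F_1>0\}$; setting $G=Fe^{-\Phi(y)}$ with $\nabla\Phi=c$ one checks $\nabla_yG=0$ in the sense of distributions, whence $G=b(z)$, and symmetrically in the other block --- this yields the product structure without ever taking logarithms of a vanishing density. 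Second, the ``if and only if'' clause should be read under $I(F)<\infty$: if both sides are $+\infty$, equality holds trivially without $F$ being a product, and your vacuous-case reduction disposes of the inequality but not of the equality statement. Neither issue changes the architecture of your argument, which is sound.
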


From Lemma \ref{entropy1} and Lemma \ref{fisherI}, for $f\in\bfP_{sym}((\bbR^d)^k)$ with $j$th marginal distribution $f^{(j)}$, where $k=qj+r$, $q,r\in\mathbb{Z}$,$q\ge0,0\le r\le j-1$, one then has
\begin{equation}\label{eq:entropyrelations}
    kI_k(f)\ge qjI_j(f^{(j)})+rI_r(f^{(r)}),\quad kH_k(f)\ge qjH_j(f^{(j)})+rH_r(f^{(r)}).
\end{equation}
Moreover, \cite[Lemma 3.7]{hauray2014} shows that that
\begin{equation}\label{fishersym}
  I_j(f^{(j)})\le I_k(f),
\end{equation}
i.e., for symmetric probability measures, the normalized Fisher information for marginal distributions $f^{(j)}$ can always be bounded by $I_k(f)$.

Since the entropy might be negative, we do not have $H_j(f^{(j)})\le H_k(f)$ and $H_j(f^{j})\le (k/(qj))H_k(f)$. To resolve this, we note the following lemma, which gives a lower bound for the entropy by moments of $f$.
\begin{lemma}[\cite{hauray2014}, Lemma 3.1]\label{entropy2}
For any $p,\lambda>0$, there exists a constant $C_{p,\lambda}\in\bbR$ such that for any $k\ge 1$, $F\in\bfP((\bbR^d)^k)$ with
$$M_p(F)=\int_{(\bbR^{d})^k}\frac{1}{k}\sum_{i=1}^k(|x_i|^2+1)^{\frac{p}{2}}F(dx)<\infty,$$
one has
\begin{equation}\label{lbentropy}
    H_k(F)\ge-C_{p,\lambda}-\lambda M_p(F)
\end{equation}
\end{lemma}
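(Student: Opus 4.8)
The plan is to compare $F$ against a well-chosen tensorized reference density and invoke the non-negativity of relative entropy, which is exactly the inequality \eqref{hauary3.3} already established in the proof of Lemma \ref{entropy1}. First I would dispose of the trivial case: if $F$ has no density then $H_k(F)=+\infty$ by the convention in \eqref{eq:entropy}, and the bound holds vacuously, so I may assume $F=\rho\,dx$ with $\rho$ a probability density on $(\bbR^d)^k$.

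The crux is the choice of reference measure. For the given $p,\lambda>0$ I would set
\[
Z_{p,\lambda}:=\int_{\bbR^d}e^{-\lambda(|y|^2+1)^{p/2}}\,dy,\qquad h_1(y):=\frac{1}{Z_{p,\lambda}}\,e^{-\lambda(|y|^2+1)^{p/2}},
\]
and take $h(x):=\prod_{i=1}^k h_1(x_i)$ as a probability density on $(\bbR^d)^k$. Before anything else I would verify that $Z_{p,\lambda}<\infty$: since $(|y|^2+1)^{p/2}\sim|y|^p$ as $|y|\to\infty$, the radial integral $\int_0^\infty r^{d-1}e^{-\lambda r^p}\,dr$ converges for every $p>0$, so the stretched-Gaussian tail dominates the polynomial volume growth and $h_1$ is a genuine probability density. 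The whole point of this choice is that $\log h_1(y)=-\log Z_{p,\lambda}-\lambda(|y|^2+1)^{p/2}$ reproduces the moment weight appearing in $M_p(F)$ term for term.

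Applying \eqref{hauary3.3} on $E=(\bbR^d)^k$ with $g=\rho$ and this $h$ gives $H(F)=\int\rho\log\rho\,dx\ge\int\rho\log h\,dx$. Because $\log h(x)=-k\log Z_{p,\lambda}-\lambda\sum_{i=1}^k(|x_i|^2+1)^{p/2}$, and the weighted integral is finite precisely because $M_p(F)<\infty$, I would compute
\[
\int\rho\log h\,dx=-k\log Z_{p,\lambda}-\lambda\sum_{i=1}^k\int(|x_i|^2+1)^{p/2}\rho\,dx=-k\log Z_{p,\lambda}-\lambda k\,M_p(F).
\]
Dividing by $k$ yields $H_k(F)\ge-\log Z_{p,\lambda}-\lambda M_p(F)$, so \eqref{lbentropy} holds with the explicit constant $C_{p,\lambda}=\log Z_{p,\lambda}\in\bbR$, crucially \emph{uniform in} $k$.

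There is no serious obstacle here: the argument is essentially a one-line application of the Gibbs variational principle once the comparison density is identified. The only genuine point to check is the finiteness of $Z_{p,\lambda}$ for every $p>0$ (together with the integrability that legitimizes the bound), and this is exactly what forces the weight inside the exponential to be $(|y|^2+1)^{p/2}$ rather than a pure power $|y|^p$ — the shift by $1$ keeps $h_1$ smooth and strictly positive at the origin while still matching the $M_p$ integrand. The uniformity of $C_{p,\lambda}$ in $k$, which is what makes the lemma useful for the mean-field limit, is automatic from the tensorization $h=h_1^{\otimes k}$.
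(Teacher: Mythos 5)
Your proof is correct: comparing $\rho$ against the tensorized reference $h_1^{\otimes k}$ with $h_1\propto e^{-\lambda(|y|^2+1)^{p/2}}$ and invoking the non-negativity of relative entropy (the paper's inequality \eqref{hauary3.3}) yields exactly \eqref{lbentropy} with $C_{p,\lambda}=\log Z_{p,\lambda}$ uniform in $k$, and this is precisely the standard argument behind the cited result — the paper itself gives no proof, deferring to \cite{hauray2014}, whose Lemma 3.1 is proved the same way. The only point worth stating explicitly is that the moment bound $M_p(F)<\infty$ makes $\int\rho\log h\,dx$ finite and controls the negative part of $\rho\log\rho$ (since $\rho\log(\rho/h)\ge -h/e$ pointwise), so that $H(F)$ is well-defined in $(-\infty,+\infty]$ and the decomposition $\int\rho\log\rho=\int\rho\log(\rho/h)+\int\rho\log h$ is legitimate.
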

Combining equations \eqref{eq:entropyrelations} and \eqref{lbentropy}, one gets a control of $H_j(f^{j})$ in terms of $H_k(f)$ as follow.
\begin{equation}
    H_j(f^{(j)})\le\frac{k}{qj}H_k(f)+\frac{r}{qj}(\lambda M_p(f^{(r)})+C_{p,\lambda}).
\end{equation}

Next we extend the estimates in \cite{fournier2014} to high-dimensional cases.
\begin{lemma}\label{fisherII}
Let $d\ge 3$.
For any probability density $f$ in $\bbR^d$ with finite Fisher information $I(f)$, one has
\begin{equation}\label{fisher:wq}
    \forall q\in \left[1,\frac{d}{d-1}\right], \|\nabla f\|_{L^q(\bbR^d)}\le C_{q,d}I(f)^{\frac{d+1}{2}-\frac{d}{2q}};
\end{equation}
\begin{equation}\label{fisher:lp}
    \forall p\in \left[1,\frac{d}{d-2} \right], \| f\|_{L^p(\bbR^d)}\le C_{p,d}I(f)^{\frac{d}{2}(1-\frac{1}{p})}.
\end{equation}
If $d=2$, then \eqref{fisher:wq} holds for $q\in \left[1,2\right)$ while \eqref{fisher:lp} holds for $ p\in \left[1,+\infty \right)$.
\end{lemma}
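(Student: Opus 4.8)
The plan is to reduce both estimates to the Sobolev and Gagliardo--Nirenberg inequalities by means of the standard substitution $u=\sqrt{f}$. Since $f$ is a probability density, $\|u\|_{L^2(\bbR^d)}^2=\int_{\bbR^d}f\,dx=1$, and because $I(f)=\int_{\bbR^d}|\nabla f|^2/f\,dx<\infty$ one has $u\in H^1(\bbR^d)$ with $\nabla u=\nabla f/(2\sqrt f)$, so that $\|\nabla u\|_{L^2(\bbR^d)}^2=\tfrac14 I(f)$. (This chain-rule identity needs a short justification by truncation, since $f$ may vanish; finite Fisher information is exactly what makes it legitimate — see the obstacle below.) Thus the two hypotheses on $f$ translate into the single statement that $u$ is normalized in $L^2$ with $\|\nabla u\|_{L^2}^2=\tfrac14 I(f)$, and everything follows by controlling $u$ and $\nabla f=2u\nabla u$ through Sobolev embedding and interpolation.

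For \eqref{fisher:lp} I would first treat the critical endpoint $p=d/(d-2)$. The Sobolev inequality $\|u\|_{L^{2^*}}\le C_d\|\nabla u\|_{L^2}$ with $2^*=2d/(d-2)$ gives, after squaring and using $\|u\|_{L^{2^*}}^2=\|f\|_{L^{d/(d-2)}}$, the bound $\|f\|_{L^{d/(d-2)}}\le C_d\, I(f)$, which is exactly \eqref{fisher:lp} at $p=d/(d-2)$ since there $\tfrac d2(1-\tfrac1p)=1$. The endpoint $p=1$ is trivial because $\|f\|_{L^1}=1$ and the exponent vanishes. For intermediate $p$ I would interpolate: writing $\tfrac1p=(1-\theta)+\theta\tfrac{d-2}{d}$ one finds $\theta=\tfrac d2(1-\tfrac1p)\in[0,1]$ and
\[
\|f\|_{L^p}\le\|f\|_{L^1}^{1-\theta}\|f\|_{L^{d/(d-2)}}^{\theta}\le C_{p,d}\,I(f)^{\theta},
\]
which is precisely the claimed exponent.

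For \eqref{fisher:wq} I would write $\nabla f=2u\nabla u$ and apply H\"older with $\tfrac1q=\tfrac1a+\tfrac12$, i.e.\ $a=\tfrac{2q}{2-q}$ (legitimate since $q<2$ for $d\ge3$), giving $\|\nabla f\|_{L^q}\le 2\|u\|_{L^a}\|\nabla u\|_{L^2}$. As $q$ runs through $[1,\tfrac{d}{d-1}]$ the exponent $a$ runs through $[2,2^*]$, so I can interpolate $\|u\|_{L^a}\le\|u\|_{L^2}^{1-\beta}\|u\|_{L^{2^*}}^{\beta}$ with $\tfrac1a=\tfrac{1-\beta}2+\tfrac{\beta}{2^*}$; a direct computation gives $\beta=d(1-\tfrac1q)\in[0,1]$. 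Combining this with $\|u\|_{L^2}=1$, $\|u\|_{L^{2^*}}\le C_d I(f)^{1/2}$ and $\|\nabla u\|_{L^2}=\tfrac12 I(f)^{1/2}$ yields $\|\nabla f\|_{L^q}\le C_{q,d}\,I(f)^{(\beta+1)/2}$ with $(\beta+1)/2=\tfrac{d+1}2-\tfrac{d}{2q}$, as claimed.

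The genuinely delicate point — rather than the exponent bookkeeping, which is routine — is the justification that $u=\sqrt f\in H^1$ together with the identity $\nabla u=\nabla f/(2\sqrt f)$ when $f$ has zeros; I would handle this by applying the chain rule to $\sqrt{f+\e}$, bounding $\|\nabla\sqrt{f+\e}\|_{L^2}^2=\int_{\bbR^d}|\nabla f|^2/(4(f+\e))\,dx\le\tfrac14 I(f)$ uniformly in $\e>0$, and passing to the limit $\e\downarrow0$ via lower semicontinuity of the Dirichlet energy together with the a.e.\ vanishing of $\nabla f$ on $\{f=0\}$. Finally, the case $d=2$ is separate because there is no finite critical Sobolev exponent: there the embedding $H^1(\bbR^2)\hookrightarrow L^a$ holds for every finite $a$ but fails at $a=\infty$, so one invokes the Gagliardo--Nirenberg inequalities directly, which accounts for the open ranges $q\in[1,2)$ and $p\in[1,\infty)$ in the statement.
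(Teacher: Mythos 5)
Your proof is correct, and it takes a recognizably different route from the paper's. The paper works with $f$ directly: it applies H\"older to get $\|\nabla f\|_q^q\le I(f)^{q/2}\|f\|_{q/(2-q)}^{q/2}$, then interpolates $\|f\|_{q/(2-q)}$ between $L^1$ and $L^{dq/(d-q)}$ and invokes the $W^{1,q}$ Sobolev inequality $\|f\|_{dq/(d-q)}\le C\|\nabla f\|_q$, producing a self-referential inequality in $\|\nabla f\|_q$ that is solved by absorption; the estimate \eqref{fisher:lp} is then deduced \emph{from} \eqref{fisher:wq} via the substitution $p=r/(2-r)$. You instead pass to $u=\sqrt f$, so that $\|u\|_2=1$ and $\|\nabla u\|_2^2=\frac14 I(f)$, prove \eqref{fisher:lp} first from the critical $H^1$ Sobolev embedding plus $L^1$--$L^{d/(d-2)}$ interpolation, and then get \eqref{fisher:wq} from $\nabla f=2u\nabla u$ by H\"older and interpolation of $\|u\|_a$ between $L^2$ and $L^{2^*}$ --- all exponents check out ($\theta=\frac d2(1-\frac1p)$, $\beta=d(1-\frac1q)$, $(\beta+1)/2=\frac{d+1}2-\frac d{2q}$), as does the $d=2$ caveat about the missing critical exponent. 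What your route buys is that every step is a direct chain of inequalities: there is no absorption step, hence no need to know a priori that $\|\nabla f\|_q<\infty$ before dividing, which is a small rigor gap in the paper's argument (repairable by mollification, but left implicit there). The price is that you must justify $\sqrt f\in H^1$ with $\nabla\sqrt f=\nabla f/(2\sqrt f)$ on $\{f>0\}$, which you do correctly via $\sqrt{f+\e}$ and lower semicontinuity; note the paper's H\"older step silently relies on the same regularity facts (it too divides by $f$), so this is not an extra hypothesis, just an extra lemma made explicit. Both proofs rest on the same analytic core --- the splitting $|\nabla f|=\frac{|\nabla f|}{\sqrt f}\cdot\sqrt f$, Sobolev, and interpolation --- but your ordering (\eqref{fisher:lp} first, then \eqref{fisher:wq}) inverts the paper's logical dependence.
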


\begin{proof}
We start from \eqref{fisher:wq}. By H\"{o}lder's inequality
\begin{equation}\label{fisherII.1}
\|\nabla f\|_q^q\le \left(\int_{\bbR^d}\frac{|\nabla f|^2}{f}dx \right)^{\frac{q}{2}}\|f\|_{\frac{q}{2-q}}^{\frac{q}{2}}.
\end{equation}
For $1\le q\le\frac{d}{d-1}$, we use the interpolation along with Sobolev's inequality
\begin{equation}\label{fisherII.2}
\|f\|_{\frac{q}{2-q}}\le\|f\|_1^{1-\theta}\|f\|_{\frac{dq}{d-q}}^{\theta}\le C_{q,d}\|\nabla f\|_q^{\theta},
\end{equation}
where $\theta$ is given by $\frac{2-q}{q}=\frac{d-q}{dq}\theta+(1-\theta)$. Note that $f$  is a probability density. Plugging \eqref{fisherII.1} into \eqref{fisherII.2}, we get \eqref{fisher:wq}.

Now for $1\le p\le\frac{d}{d-2}$, we can find some $1\le r\le\frac{d}{d-1}$ satisfying $p=\frac{r}{2-r}$. Then by \eqref{fisherII.2} and \eqref{fisher:wq} we can easily obtain \eqref{fisher:lp}.
\end{proof}

The following lemma is a slight generalization of those in \cite{hauray2014} to higher dimension, which is important to control some singular integrals using Fisher information.
\begin{lemma}\label{fisherIII}
Suppose $(X_1, X_2)$ is a random variable with density $F$ in $\bbR^d\times\bbR^d$. Assume that $F$ has finite Fisher information $I(F)$. 
\begin{enumerate}
\item For any $0<\gamma<2$ and $\frac{\gamma}{d}<\beta\le\frac{2}{d}$, there exists $C_{\gamma,\beta}$ such that
\begin{equation}\label{fisher:exp1}
    \bbE[|X_1-X_2|^{-\gamma}]=\int_{\bbR^d\times\bbR^d}\frac{F(x_1,x_2)}{|x_1-x_2|^{\gamma}}dx_1dx_2\le C_{\gamma,\beta}\left( I(F)^{\frac{\beta d}{2}}+1 \right).
\end{equation}
Moreover, for any $\epsilon>0$,  the following estimate holds:
\begin{equation}\label{fisher:exp2}
    \int_{|x-y|<\epsilon}\frac{F(x,y)}{|x-y|^{\gamma}}dxdy\le C_{\gamma,\beta}\epsilon^{d\beta-{\gamma}}I(F)^{\frac{d\beta }{2}}.
\end{equation}

\item For $d\ge 3$ and $\gamma=2$, it also holds that
\begin{gather}
 \bbE[|X_1-X_2|^{-2}]=\int_{\bbR^d\times\bbR^d}\frac{F(x_1,x_2)}{|x_1-x_2|^{2}}dx_1dx_2
 \le \frac{C}{(d-2)^2}I(F).
 \end{gather}

\end{enumerate}
\end{lemma}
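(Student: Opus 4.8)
The plan is to reduce both estimates to one-variable inequalities for the density of the difference $Z=X_1-X_2$, after first controlling its Fisher information by $I(F)$. I would pass to the orthogonal change of variables $u=(x_1-x_2)/\sqrt2$, $v=(x_1+x_2)/\sqrt2$ on $\bbR^d\times\bbR^d$. Since this map is a rotation, its Jacobian is $1$ and it preserves $|\nabla\log\rho|$, so the transformed density $\tilde F(u,v)$ satisfies $I(\tilde F)=I(F)$. Writing $h$ for the $u$-marginal of $\tilde F$, i.e. the density of $(X_1-X_2)/\sqrt2$, the subadditivity of Fisher information over the two factors (Lemma \ref{fisherI}, with $u$ and $v$ playing the role of the two ``particles'') gives $I(h)\le I(\tilde F)=I(F)$. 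After an elementary rescaling by $\sqrt2$, everything is thus reduced to estimating $\int_{\bbR^d}|u|^{-\gamma}h(u)\,du$ for a probability density $h$ with $I(h)\le I(F)$, up to harmless powers of $2$ that are absorbed into the constants.

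For part (1) I would prove the local bound \eqref{fisher:exp2} first and deduce \eqref{fisher:exp1} by splitting the integral at a fixed radius. On the ball $\{|u|<\epsilon\}$, apply H\"older with conjugate exponents $p,p'$:
\[
\int_{|u|<\epsilon}\frac{h(u)}{|u|^\gamma}\,du\le \|h\|_{L^p(\bbR^d)}\left(\int_{|u|<\epsilon}|u|^{-\gamma p'}\,du\right)^{1/p'}.
\]
Setting $\beta=1/p'$, the condition $\gamma p'<d$ needed for local integrability of $|u|^{-\gamma p'}$ is exactly $\beta>\gamma/d$, and the radial integral is then $C\epsilon^{d\beta-\gamma}$. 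For the $L^p$ factor I would invoke the interpolation bound \eqref{fisher:lp} of Lemma \ref{fisherII}, which requires $p\le d/(d-2)$, i.e. $\beta\le 2/d$, and yields $\|h\|_p\le C\,I(h)^{\frac{d}{2}(1-1/p)}=C\,I(h)^{d\beta/2}$. Combining the two factors gives precisely \eqref{fisher:exp2}; taking $\epsilon$ fixed and bounding the complementary region by $\epsilon^{-\gamma}\int h=\epsilon^{-\gamma}$ then produces \eqref{fisher:exp1}. The two admissibility ranges for $\beta$ match the hypotheses $\gamma/d<\beta\le 2/d$ exactly, which is the whole point of the bookkeeping.

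For part (2) the exponent $\gamma=2$ is critical and Lemma \ref{fisherII} no longer applies, so I would use Hardy's inequality instead. Taking $\phi=\sqrt h$ in $\int_{\bbR^d}|u|^{-2}\phi^2\,du\le\frac{4}{(d-2)^2}\int|\nabla\phi|^2\,du$ (valid for $d\ge 3$) and using $|\nabla\sqrt h|^2=\tfrac14|\nabla h|^2/h$ gives $\int|u|^{-2}h\,du\le\frac{1}{(d-2)^2}I(h)\le\frac{1}{(d-2)^2}I(F)$, and the rescaling back to $X_1-X_2$ only changes the constant.

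The main obstacle, and the reason for choosing the orthogonal rather than a naive shear change of variables, is the step $I(h)\le I(F)$: Fisher information is only guaranteed to decrease under linear marginalization when the map is both measure-preserving and norm-preserving, so rotating before marginalizing is essential. A secondary technical point is justifying $\phi=\sqrt h$ as an admissible competitor in Hardy's inequality when $h$ is merely a density of finite Fisher information (equivalently, $\sqrt h\in H^1$); this is handled by a standard mollification and truncation approximation.
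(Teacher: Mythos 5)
Your proposal is correct and follows essentially the same route as the paper's proof: the rotation $u=(x_1-x_2)/\sqrt{2}$, $v=(x_1+x_2)/\sqrt{2}$ preserving Fisher information, superadditivity (Lemma \ref{fisherI}) to get $I(h)\le I(F)$, and H\"older together with \eqref{fisher:lp} of Lemma \ref{fisherII} with $\beta=1/p'$ for part (1) — your derivation of \eqref{fisher:exp1} from \eqref{fisher:exp2} by splitting at a fixed radius is an immaterial reordering of the paper's split at $|y|=1$. For part (2), invoking Hardy's inequality with $\phi=\sqrt{h}$ is the same argument as the paper's inline integration by parts with the optimized Young's inequality (that computation is precisely the standard proof of Hardy's inequality), yielding the identical constant $\frac{1}{(d-2)^2}I(F)$, and your closing remarks on why the orthogonal (rather than shear) change of variables is needed and on the density approximation match the paper's justification.
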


\begin{proof}
Set $Y_1=\frac{X_1-X_2}{\sqrt{2}}$, $Y_2=\frac{X_1+X_2}{\sqrt{2}}$ and denote the joint distribution of $(Y_1,Y_2)$ by $\tilde{F}(y_1,y_2)$. Then $I(F)=I(\tilde{F})$. Denote the density of $Y_1$ by $\tilde{f}$. From the super-additivity property of Fisher information (Lemma \ref{fisherI}), we see that $I(\tilde{f})\le I(\tilde{F})=2I_2(F)$. 

1. By simple computation:
\begin{equation}\label{fisherIII.1}
    \begin{split}
        \int_{\bbR^d\times\bbR^d}\frac{F(x_1,x_2)}{|x_1-x_2|^{\gamma}}dx_1dx_2&=2^{\frac{\gamma}{2}}\int_{\bbR^d}\frac{\tilde{f}(y)}{|y|^{\gamma}}\,dy\\
        &\le2^{\frac{\gamma}{2}} \left( \int_{|y|>1}\tilde{f}(y)dy+\int_{|y|\le1}\frac{\tilde{f}(y)}{|y|^{\gamma}}dy \right).
    \end{split}
\end{equation}\label{fisherIII.2}
The first term does not exceed 1, while for the second term one applies H\"older's inequality and \eqref{fisher:lp},
\begin{equation}
    \int_{|y|\le1}\frac{\tilde{f}(y)}{|y|^{\gamma}}dy\le \left(\int_{|y|\le1}|y|^{-\frac{\gamma}{\beta}}dy\right)^{\beta}
    \|\tilde{f}\|_{\frac{1}{1-\beta}}
    \le C_{\gamma,\beta}I(\tilde{f})^{\frac{\beta d}{2}}\le 2C_{\gamma,\beta}I_2(F)^{\frac{d\beta }{2}} .
\end{equation}

Note that the restriction $\frac{\gamma}{\beta}< d$ comes from the integrability of $|y|^s$ while $\beta\le\frac{2}{d}$ comes from \eqref{fisher:lp}. Therefore \eqref{fisher:exp1} holds. For \eqref{fisher:exp2}, one has
\begin{equation}
\begin{split}
    &\int_{|x-y|<\epsilon}\frac{F(x,y)}{|x-y|^{\gamma}}dxdy
    =\int_{|y|\le\frac{\epsilon}{\sqrt{2}}}\frac{\tilde{f}(y)}{|y|^{\gamma}}dy\\
    &\le \left(\int_{|y|\le\frac{\epsilon}{\sqrt{2}}}|y|^{-\frac{\gamma}{\beta}}dy \right)^{\beta}\|\tilde{f}\|_{\frac{1}{1-\beta}}
    \le C_{\gamma,\beta}\epsilon^{d\beta-{\gamma}}I(\tilde{f})^{\frac{d\beta }{2}}\le 2C_{\gamma,\beta}\epsilon^{d\beta-{\gamma}}I_2(F)^{\frac{d\beta}{2}},
    \end{split}
\end{equation}
which implies \eqref{fisher:exp2}.

2. For $d\ge 3$ and $\gamma=2$, we note
\begin{multline*}
\int_{\bbR^d}\frac{1}{|y|^2}\tilde{f}(y)\,dy
=-\frac{1}{d-2}\int_{\bbR^d}\frac{y}{|y|^2}\cdot\nabla\tilde{f}(y)\,dy \\
\le \frac{1}{d-2}\left(\delta\int_{\bbR^d} \frac{1}{|y|^2}\tilde{f}(y)\,dy+\frac{1}{4\delta}\int_{\bbR^d}\frac{|\nabla\tilde{f}(y)|^2}{\tilde{f}(y)}\,dy\right).
\end{multline*}
One can choose $\delta=\frac{d-2}{2}$ and obtain
\[
\int_{\bbR^d}\frac{1}{|y|^2}\tilde{f}(y)\,dy\le \frac{1}{(d-2)^2}I(\tilde{f}).
\]
The integration by parts can be easily justified by approximating $\tilde{f}$ with compactly supported smooth functions. The claim therefore follows.
\end{proof}

\subsection{The $N$-particle system}
In this part, we study the $N$-particle system \eqref{model} and provide some estimates on the entropy and energy. Most of the results have been established in \cite{liuyang2016}, but we will give alternative proofs here for the convenience of the readers. These results will be used further  in the proof for propagation of chaos result in section \ref{sec:3dchaos}.

Assume the dimension $d\ge3$ and throughout this part and $N$ is set to be fixed. We will also use $X_t^i$ to represent $X_t^{i,N}$ in this section for convenience. The joint distribution of the particles $(X_t^1,...,X_t^N)$ is denoted by $f_t^N$.  The important quantities associated with the system include entropy and energy.
Recall the entropy defined in \eqref{eq:normalizedentropy} and we also define the energy given by (recall \eqref{eq:empirical} for the empirical measure $\mu^N$)
\begin{gather}
\mathscr{E}_N(t):=\frac{1}{2}\iint_{\mathcal{D}^c}g(x-y)\mu^N(dx)\mu^N(dy)(t)=\frac{1}{2N^2}\sum_{i\neq j}g(X_t^i-X_t^j),
\end{gather}
where $\mathcal{D}$ represents the diagonal $\{(x,y): x=y\}$.  For the convenience, we define
\begin{gather}
h_0=(-\Delta)^{-1}\rho_0=\int_{\bbR^d} g(x-y)\rho_0(y)\,dy.
\end{gather}
The continuous system has an initial energy:
\begin{multline}
\mathcal{E}(\rho_0):=\frac{1}{2}\iint_{(\bbR^{d})^2} g(x-y)\rho_0(x)\rho_0(y) dx dy \\
=\int_{\bbR^d} |\nabla h_0(x)|^2\,dx=\|\rho_0\|_{H^{-1}}^2
\le C\|\rho_0\|_{\frac{2d}{d+2}}^2,
\end{multline}
by Hardy-Littlewood-Sobolev inequality. Moreover, it holds that
\[
\mathbb{E}\mathscr{E}_N(0)=\frac{N-1}{N}\mathcal{E}(\rho_0).
\]

We first of all state the results about the well-posedness of the system \eqref{model}:
\begin{theorem}\label{noncollision}
For any $d\ge3$ and $N\ge2$, consider a sequence of independent $d$-dimensional Brownian motions $\{(B_t^i)_{t\ge0}\}_{i=1}^N$ and the independent and identically distributed (i.i.d.) initial data $\{X_0^i\}_{i=1}^N$ with a common distribution $f_0$ satisfying $H_1(f_0)<+\infty$ and a common density $\rho_0\in L^{\frac{2d}{d+2}}(\bbR^d)\cap L^1(\bbR^d,(1+|x|^2)dx)$. Then there exists a unique global strong solution to \eqref{model} with $X_t^i\neq X_t^j$ a.s. for all $t>0$ and $i\neq j$.
\end{theorem}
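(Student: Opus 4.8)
The plan is to solve the system locally away from the collision set and then use an energy (Lyapunov) argument, built on the harmonicity of the Coulomb kernel, to rule out both collisions and escape to infinity in finite time. First I would establish local well-posedness: off the diagonal $\Delta=\bigcup_{i<j}\{x_i=x_j\}\subset(\bbR^d)^N$, the drift $b(x)=-\frac1N\nabla\cH(x)$ is $C^\infty$ and hence locally Lipschitz. For each $n$ set $O_n=\{x:\min_{i<j}|x_i-x_j|>1/n,\ \max_i|x_i|<n\}$; truncating $b$ to a globally Lipschitz field agreeing with it on $O_n$, standard SDE theory yields a unique strong solution on $O_n$, and patching gives a unique strong solution up to $\tau=\lim_{n\to\infty}\tau_n$, where $\tau_n=\inf\{t\ge0:X_t\notin O_n\}$. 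Everything then reduces to showing $\tau=\infty$ almost surely.

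The key step is to exploit that $-\Delta g=\delta_0$, so $g$ is harmonic on $\bbR^d\setminus\{0\}$ and $\sum_i\Delta_{x_i}\cH=0$ off $\Delta$. Applying It\^o's formula to $\cH(X_t)$ on $[0,\tau_n]$, where all coefficients are smooth and bounded, the second-order correction drops out, leaving
\[
d\cH(X_t)=-\frac1N\sum_{i=1}^N|\nabla_{x_i}\cH(X_t)|^2\,dt+\sqrt2\sum_{i=1}^N\nabla_{x_i}\cH(X_t)\cdot dB_t^i.
\]
Since the drift is nonpositive and the stochastic integral is a true martingale on $[0,\tau_n]$ (bounded integrand), $\cH(X_{t\wedge\tau_n})$ is a nonnegative supermartingale, whence $\bbE\,\cH(X_{t\wedge\tau_n})\le\bbE\,\cH(X_0)=\binom N2\iint g(x-y)\rho_0(x)\rho_0(y)\,dx\,dy<\infty$, the finiteness following from the Hardy--Littlewood--Sobolev inequality and $\rho_0\in L^{\frac{2d}{d+2}}$.

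Next I would control escape to infinity via $V(x)=\sum_i|x_i|^2$. Symmetrizing in $i\leftrightarrow j$ and using $z\cdot\nabla g(z)=-(d-2)g(z)$ gives the repulsion identity $\sum_i x_i\cdot\nabla_{x_i}\cH=-\frac{d-2}2\sum_{i\neq j}g(x_i-x_j)\le0$, so It\^o's formula yields that the drift of $V$ equals $\frac{2(d-2)}N\cH(X_t)+2dN$. Combined with the supermartingale bound $\bbE\,\cH(X_{s\wedge\tau_n})\le\bbE\,\cH(X_0)$, this gives $\bbE\,V(X_{t\wedge\tau_n})\le\bbE\,V(X_0)+Ct$ uniformly in $n$, with $\bbE\,V(X_0)=N\int|x|^2\rho_0\,dx<\infty$ by the second-moment assumption $\rho_0\in L^1(\bbR^d,(1+|x|^2)dx)$.

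Finally I would close the argument quantitatively. On $\{\tau_n\le T\}$ either some pair reaches distance $1/n$, forcing $\cH(X_{\tau_n})\ge g(1/n)=C_d n^{d-2}$, or some particle reaches $|x_i|=n$, forcing $V(X_{\tau_n})\ge n^2$. Using $\cH,V\ge0$, the two uniform bounds, and Markov's inequality, $\bbP(\tau_n\le T)\lesssim n^{-(d-2)}+C_Tn^{-2}\to0$, so $\bbP(\tau\le T)=0$ for every $T$ and hence $\tau=\infty$ a.s.; the same estimate forces $\min_{i<j}|X_t^i-X_t^j|>0$ for all $t>0$. I expect the main obstacle to be precisely the recognition that harmonicity of $g$ annihilates the It\^o correction and turns the energy into a supermartingale, which is what tames the singular drift; the remaining care is in matching the moment hypotheses to the two Lyapunov bounds and making the non-collision and non-explosion estimates uniform in $n$.
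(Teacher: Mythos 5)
Your proof is correct, but it takes a genuinely different route from the paper's. The paper never localizes the singular drift directly: it mollifies the kernel ($g_\e=J_\e*g$), solves the regularized system \eqref{regmodel} globally (the drift $F_\e$ is bounded and Lipschitz, so non-explosion is automatic and you never need a second-moment Lyapunov function), shows via pathwise uniqueness and a rational-time argument that the solutions $X^{\e}$ for different $\e$ agree before the collision time $\tau_\e$, and then rules out collisions by applying It\^o's formula to the mollified energy $\Phi^{\e,N}$, controlling the martingale part through Doob's $L^2$ maximal inequality with the bound $\bbE[(M_T^\e)^2]\le CN\mathcal{E}(\rho_0)$ coming from the energy-dissipation identity \eqref{energy:est1}, and optimizing in Markov's inequality with $R=g(\e)^{1/2}$. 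You instead work with the exact singular dynamics via the exit times $\tau_n$ of the Lipschitz regions $O_n$, exploit the same underlying mechanism (harmonicity of $g$, which in the paper appears as $-\Delta g^\e=J_\e\ge 0$ supported in $B(0,\e)$) to make the exact energy $\cH(X_{t\wedge\tau_n})$ a nonnegative supermartingale, and then apply optional stopping and Markov directly at the stopped time, which is cleaner than the paper's maximal-inequality step; the price is that your truncation $\max_i|x_i|<n$ forces you to rule out escape to infinity separately, which you do correctly via the virial identity $x\cdot\nabla g=-(d-2)g$ and the second-moment hypothesis (both uses of the hypotheses --- HLS for $\bbE\,\cH(X_0)<\infty$ and $\rho_0\in L^1((1+|x|^2)dx)$ for $\bbE\,V(X_0)<\infty$ --- match the paper's). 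What the paper's construction buys, and yours does not, is the mollified approximation itself: the a.s. coincidence $X_t=X_t^{\e_n}$ for $n$ large and the quantitative estimates of Lemma \ref{u-estimateI} are reused downstream (in Proposition \ref{u-estimateII} to identify the density and pass entropy/energy bounds to the limit, and in Proposition \ref{pro:uniformboundenergy} via the martingale $M_t^\e$), so within the paper the mollification is not dispensable even though your argument proves Theorem \ref{noncollision} on its own. Two small points of care if you write this up: in the uniqueness/patching step the exceptional null set should be fixed uniformly in $t$ (the paper's intersection over rational times does exactly this), and the nonnegativity of $\cH$, which you use both for the supermartingale bound and for the single-pair lower bound $\cH(X_{\tau_n})\ge g(1/n)$, is specific to $d\ge3$.
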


The proof for the non-collision result and energy estimate is based on mollification approximation. Recall that the potential $g(x)=C_d|x|^{2-d}$ is the solution to $-\Delta g=\delta$, and the mollification we use is given by 
\begin{gather}
g_{\e}=J_\e*g,~~~ F_\e(x)=-\nabla g_\e(x),
\end{gather}
where $J_{\e}=\frac{1}{\e^d}J(\frac{x}{\e})$, for some fixed $J(x)\in C^2(\bbR^d)$ which is non-negative, radial, with $\supp J(x)\subset B(0,1)$ and $\int_{\bbR^d} J(x)\,dx=1$. This mollification has the following standard properties, for which we omit the proofs.
\begin{lemma}[\cite{liuyang2016}, Lemma 2.1]\label{mollification}
\begin{enumerate}[(i)]
  \item $g_\e(x)=g(x), F_\e(x)=F(x)$ whenever $|x|\ge\e$;
  \item $F_\e(0)=0$, $\nabla\cdot F_\e(x)=J_\e(x)$;
  \item $|F_\e(x)|\le\min\{\frac{C_d|x|}{\e^d},|F(x)|\}$.
\end{enumerate}
\end{lemma}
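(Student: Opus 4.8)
The plan is to reduce all three properties to the radial structure of $g_\e$ together with the defining identity $-\Delta g=\delta_0$. Since $g(x)=C_d|x|^{2-d}$ is radial and $J_\e$ is radial, the convolution $g_\e=J_\e*g$ is radial; moreover $J_\e\in C^2$ with compact support and $g\in L^1_{\loc}$, so $g_\e\in C^2(\bbR^d)$ with $\partial^\alpha g_\e=(\partial^\alpha J_\e)*g$ for $|\alpha|\le 2$, and hence $F_\e=-\nabla g_\e$ is a continuous radial field, $F_\e(x)=\phi(|x|)\,x/|x|$ for a scalar profile $\phi$. First I would establish the divergence identity in (ii): convolution commutes with differentiation, so
\[
\nabla\cdot F_\e=-\Delta g_\e=-J_\e*\Delta g=J_\e*\delta_0=J_\e .
\]

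Next I would extract the profile $\phi$ by Gauss's theorem, which turns out to unify the proofs of (i) and (iii). Writing $m_\e(r):=\int_{B(0,r)}J_\e\,dx$ and integrating $\nabla\cdot F_\e=J_\e\ge0$ over $B(0,r)$, I get
\[
\phi(r)\,|\mathbb{S}^{d-1}|\,r^{d-1}=\int_{\partial B(0,r)}F_\e\cdot\nu\,dS=\int_{B(0,r)}J_\e\,dx=m_\e(r),
\]
so $|F_\e(x)|=\phi(|x|)=m_\e(|x|)\big/\big(|\mathbb{S}^{d-1}|\,|x|^{d-1}\big)\ge 0$, i.e. $F_\e$ points outward just as $F$ does. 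The very same computation applied to $F=-\nabla g$, using $-\Delta g=\delta_0$, gives $|F(x)|=1/(|\mathbb{S}^{d-1}||x|^{d-1})$, equivalently $(d-2)C_d=1/|\mathbb{S}^{d-1}|$. For (i): when $|x|\ge\e$ the ball $B(0,|x|)$ contains $\supp J_\e\subset B(0,\e)$, so $m_\e(|x|)=1$ and $|F_\e(x)|=1/(|\mathbb{S}^{d-1}||x|^{d-1})=|F(x)|$ with matching outward direction, whence $F_\e(x)=F(x)$.

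The equality $g_\e(x)=g(x)$ for $|x|\ge\e$ I would prove separately by the mean value property: $g$ is harmonic on $\bbR^d\setminus\{0\}$, so for $r<\e\le|x|$ its spherical average over $\partial B(x,r)$ equals $g(x)$; integrating this against the radial weight $J_\e$ gives $g_\e(x)=g(x)\int J_\e=g(x)$, the boundary case $|x|=\e$ following by continuity (alternatively one notes $\nabla(g_\e-g)=0$ on the connected set $\{|x|>\e\}$ while both functions vanish at infinity). For (iii) I would use two elementary bounds on $m_\e$. Since $m_\e(r)\le\int_{\bbR^d}J_\e=1$, the profile formula yields $|F_\e(x)|\le1/(|\mathbb{S}^{d-1}||x|^{d-1})=|F(x)|$ for every $x$; and since $m_\e(r)\le\|J_\e\|_\infty|B(0,r)|=\e^{-d}\|J\|_\infty\omega_d r^d$ (with $\omega_d=|B(0,1)|$, so $\omega_d/|\mathbb{S}^{d-1}|=1/d$), I get $|F_\e(x)|\le(\|J\|_\infty/d)\,|x|/\e^d$ for $|x|\le\e$, and after enlarging the constant for all $x$ since $|F_\e|$ decreases beyond $\e$ while $|x|/\e^d$ grows. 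Here the constant written $C_d$ in the statement is really a generic $C=C(d,J)$. Together these give $|F_\e(x)|\le\min\{C|x|/\e^d,|F(x)|\}$. The leftover claim $F_\e(0)=0$ in (ii) is then immediate: $F_\e(0)=(J_\e*F)(0)=\int J_\e(y)F(y)\,dy$ is the integral of an odd function ($J_\e$ even, $F$ odd), which vanishes, the integrand being integrable near $0$ because $|J_\e(y)F(y)|\lesssim|y|^{1-d}$.

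All the computations are routine bookkeeping once the radial reduction via the divergence theorem is in place; the only step needing a genuine idea rather than matching of gradients is the $g_\e=g$ half of (i), where one must exploit that $g$ is harmonic off the origin and that $J_\e$ is radial. I would therefore treat that mean value property argument as the main point and present the Gauss-theorem profile $\phi$ as the device that delivers everything else.
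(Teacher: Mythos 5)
Your proof is correct, and in fact there is nothing in the paper to compare it against: the authors explicitly omit the proof of this lemma, deferring to \cite{liuyang2016}, Lemma 2.1, so your argument serves as a complete self-contained substitute. The structure is sound: the distributional identity $\nabla\cdot F_\e=-\Delta(J_\e*g)=J_\e*\delta_0=J_\e$ upgrades to a pointwise one since $g_\e\in C^2$; the radial profile $|F_\e(x)|=m_\e(|x|)/(|\mathbb{S}^{d-1}|\,|x|^{d-1})$ obtained from the divergence theorem then delivers $F_\e=F$ for $|x|\ge\e$ (via $m_\e\equiv 1$ there and $(d-2)C_d=1/|\mathbb{S}^{d-1}|$), both halves of (iii) (via $m_\e\le 1$ and $m_\e(r)\le\|J\|_\infty\omega_d r^d/\e^d$), and the mean value property handles $g_\e=g$ on $\{|x|\ge\e\}$ correctly since every sphere $\partial B(x,r)$ with $r<\e\le|x|$ avoids the origin. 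Two small remarks. First, your observation that the constant in (iii) must be a generic $C(d,J)$ rather than the literal fundamental-solution constant $C_d$ is right and worth making: since $\int J=1$ and $\supp J\subset B(0,1)$ force $\|J\|_\infty\ge 1/\omega_d$, one has $\|J\|_\infty/d\ge(d-2)C_d$, so the sharp constant genuinely depends on $J$; the paper uses $C_d$ loosely here. Second, your odd-function argument for $F_\e(0)=0$ silently uses $F_\e=J_\e*F$, which requires knowing that the distributional gradient of $g$ coincides with its pointwise gradient (true, as $g\in W^{1,1}_{\loc}(\bbR^d)$ for $d\ge 3$, but it deserves a word); alternatively this step is free of charge, since your profile formula already gives $|F_\e(x)|\le C|x|/\e^d\to 0$ and $F_\e$ is continuous, so $F_\e(0)=0$ follows from (iii) with no separate argument.
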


We first consider the following regularized system for \eqref{model}:
\begin{equation}\label{regmodel}
    dX_t^{i,\e}=\frac{1}{N}\sum_{j=1,j\neq i}^NF_\e(X_t^{i,\e}-X_t^{j,\e})dt+\sqrt{2}dB_t^i, \ \ X_t^{i,\e}|_{t=0}=X_0^i.
\end{equation}
We try to use system \eqref{regmodel} to approximate \eqref{model}. Since $F_\e\in C_b^2(\bbR^d)$, \eqref{regmodel} is well-defined and has a unique strong solution.
We start with the a priori estimates of the entropy and energy for this regularized system.
\begin{lemma}\label{u-estimateI}
Let  $\{X_t^{i,\e}\}_{i=1}^N$ be the unique strong solution to \eqref{regmodel} with joint distribution $(f_t^{N,\e})_{t\ge0}$ and density $(\rho_t^{N,\e})_{t\ge0}$. Then we have the following relation for energy
\begin{multline}\label{energy:est1}
\langle \rho_t^{N,\e}, E^{N,\e}\rangle+\int_0^t\langle \rho_s^{N,\e}, |F_1^{N,\e}|^2\rangle ds
+\frac{N-1}{N}\int_0^t\langle \rho_s^{N,\e}, J^{\e}(x_1-x_2)\rangle\,ds \\
    = \frac{N-1}{2N}\iint_{(\bbR^d)^2} g^{\e}(x-y)\rho_0(x)\rho_0(y)\,dxdy\le C_1\mathcal{E}(\rho_0),
\end{multline}
and uniform estimates for entropy and second moment as follows
\begin{equation}\label{enmo:est1}
\begin{split}
&H_N(f_t^{N,\e})+\int_0^tI_N(f_s^{N,\e})ds+\frac{N-1}{N}\int_0^t\langle \rho_s^{N,\e}, J^{\e}(x_1-x_2)\rangle\,ds =H_N(f_0^N)=H_1(\rho_0),\\
&\bbE[|X_t^{i,\e}|^2]\le 3\bbE[|X_0^1|^2]+Ct\mathcal{E}(\rho_0)+6td.
\end{split}
\end{equation}
Here
\begin{gather}
E^{N,\e}(x)=\frac{1}{2N^2}\sum_{i,j=1,i\neq j}^Ng^{\e}(x_i-x_j),~~
F_1^{N,\e}(x)=\frac{1}{N}\sum_{j=2}^N F_{\e}(x_j-x_1).
\end{gather}
\end{lemma}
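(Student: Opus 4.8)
The plan is to read off all three estimates from the forward Kolmogorov (Fokker--Planck) equation for the law $f_t^{N,\e}=\rho_t^{N,\e}\,dx$ of \eqref{regmodel}, combined with the exchangeability of the particles. Because $F_\e\in C_b^2(\bbR^d)$ and the noise is non-degenerate, $\rho_t^{N,\e}$ is a smooth positive density for $t>0$ solving
\[
\partial_t\rho=\Delta\rho+\frac1N\sum_{i=1}^N\nabla_{x_i}\cdot\bigl(\rho\,\nabla_{x_i}\cH_\e\bigr),\qquad \cH_\e(x)=\tfrac12\sum_{i\neq j}g_\e(x_i-x_j),
\]
where $\Delta=\sum_i\Delta_{x_i}$ and the drift is $b_i=\frac1N\sum_{j\neq i}F_\e(x_i-x_j)=-\frac1N\nabla_{x_i}\cH_\e$. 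I will use two elementary consequences of Lemma~\ref{mollification}: $\Delta g_\e=-J_\e$, hence $\Delta_{x_i}\cH_\e=-\sum_{j\neq i}J_\e(x_i-x_j)$; and, since $F_\e$ is odd, $\nabla_{x_i}\cH_\e=N F_i^{N,\e}$ (so $b_i=-F_i^{N,\e}$), where $F_i^{N,\e}$ is the analogue of $F_1^{N,\e}$. Exchangeability then turns every ordered-pair sum $\sum_{i\neq j}\bbE[\phi(X^i,X^j)]$ into $N(N-1)\langle\rho_t^{N,\e},\phi(x_1,x_2)\rangle$ and every symmetric single-particle average into its $i=1$ value.

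For the energy identity \eqref{energy:est1} I would apply It\^o's formula to the bounded $C^2$ observable $E^{N,\e}=\frac1{N^2}\cH_\e$; this is the cleanest route since $g_\e\in C_b^2$ makes the stochastic integral a genuine mean-zero martingale. Taking expectations and using the two identities above,
\[
\frac{d}{dt}\langle\rho_t^{N,\e},E^{N,\e}\rangle=\bbE\Bigl[\Delta E^{N,\e}+\sum_i\nabla_{x_i}E^{N,\e}\cdot b_i\Bigr]=-\frac{N-1}{N}\langle\rho_t^{N,\e},J_\e(x_1-x_2)\rangle-\langle\rho_t^{N,\e},|F_1^{N,\e}|^2\rangle,
\]
because $\sum_i\nabla_{x_i}E^{N,\e}\cdot b_i=-\frac1{N^3}\sum_i|\nabla_{x_i}\cH_\e|^2=-\frac1N\sum_i|F_i^{N,\e}|^2$. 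Integrating in time and computing the constant $\langle\rho_0^{\otimes N},E^{N,\e}\rangle=\frac{N-1}{2N}\iint g_\e\rho_0\rho_0$ from the i.i.d.\ initial law gives the stated equality. The final inequality uses $g_\e\le g$: since $-\Delta g=\delta_0\ge0$, $g$ is superharmonic, so its spherical averages, and hence $g_\e=J_\e*g$ with $J_\e$ radial, are bounded by $g$; thus $\iint g_\e\rho_0\rho_0\le\iint g\rho_0\rho_0=2\mathcal{E}(\rho_0)$, which is finite by Hardy--Littlewood--Sobolev under $\rho_0\in L^{2d/(d+2)}$.

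For the entropy/Fisher identity in \eqref{enmo:est1} I would differentiate $H(f_t^{N,\e})=\int\rho\log\rho\,dx$ directly; by mass conservation $\frac{d}{dt}H=\int(\partial_t\rho)\log\rho\,dx$. The diffusion term yields $\int(\Delta\rho)\log\rho\,dx=-I(f_t^{N,\e})$, and the drift term yields $-\int\rho\,(\nabla\cdot b)\,dx=-(N-1)\langle\rho_t^{N,\e},J_\e(x_1-x_2)\rangle$ after inserting $\nabla\cdot b=\frac1N\sum_{i\neq j}J_\e(x_i-x_j)$ and using exchangeability. Dividing by $N$, integrating, and using $H_N(f_0^N)=H_N(\rho_0^{\otimes N})=H_1(\rho_0)$ gives the entropy identity. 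The moment bound needs no differentiation: from $X_t^i=X_0^i+\int_0^t b_i\,ds+\sqrt2\,B_t^i$, the inequality $|a+b+c|^2\le3(|a|^2+|b|^2+|c|^2)$, Cauchy--Schwarz in $|\int_0^t b_i\,ds|^2\le t\int_0^t|b_i|^2\,ds$, exchangeability ($|b_i|=|F_i^{N,\e}|$), and $\bbE|B_t^i|^2=td$, one gets $\bbE|X_t^i|^2\le3\bbE|X_0^1|^2+3t\int_0^t\langle\rho_s^{N,\e},|F_1^{N,\e}|^2\rangle\,ds+6td$; since $g_\e,J_\e\ge0$ every term on the left of \eqref{energy:est1} is nonnegative, so the force-dissipation integral is $\le C_1\mathcal{E}(\rho_0)$, giving the claim with $C=3C_1$.

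The main obstacle is rigor in the entropy step: unlike the energy and moment estimates it does not reduce to It\^o's formula for a bounded observable, so one must justify the integration by parts $\int(\Delta\rho)\log\rho\,dx=-\int|\nabla\rho|^2/\rho\,dx$ and $\int b\cdot\nabla\rho\,dx=-\int\rho\,\nabla\cdot b\,dx$, together with the integrability of $\rho\log\rho$ and $|\nabla\rho|^2/\rho$ and the vanishing of the boundary terms at infinity. I would handle this with the standard regularization/truncation: replace $\log\rho$ by $\log(\rho+\eta)$ cut off to a large ball, use parabolic smoothing together with the finite initial entropy $H_1(\rho_0)<\infty$ and the moment bound (and Lemma~\ref{entropy2} to keep the possibly negative entropy controlled from below), and then let $\eta\to0$ and the ball exhaust $\bbR^{Nd}$. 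The energy and moment parts, by contrast, need only $F_\e,g_\e\in C_b^2$ and $\mathcal{E}(\rho_0)<\infty$, so they are essentially routine.
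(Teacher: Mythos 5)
Your proposal is correct and follows essentially the same route as the paper: both read the energy and entropy relations off the Fokker--Planck equation for $\rho_t^{N,\e}$ (your It\^o-plus-expectation computation for the observable $E^{N,\e}$ is just the dual form of the paper's pairing of the equation with $g^{N,\e}$), both use exchangeability to reduce pair sums to the $(x_1,x_2)$-marginal, and both obtain the moment bound from the pathwise inequality combined with \eqref{energy:est1} and the nonnegativity of $g^\e$ and $J_\e$; your explicit truncation plan for justifying the entropy integration by parts is, if anything, more careful than the paper's sketch. The only cosmetic divergence is the final bound $\frac{N-1}{2N}\iint g^{\e}\rho_0\rho_0\,dxdy\le C_1\mathcal{E}(\rho_0)$: you deduce the pointwise inequality $g_\e\le g$ from the superharmonicity of $g$ and the fact that $J_\e$ is a radial probability density (valid, and it even yields $C_1=1$), whereas the paper rewrites the quantity as $\int\nabla h_0^{\e}\cdot\nabla h_0\,dx$ and applies Young's convolution inequality --- both arguments are sound.
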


\begin{proof}[Sketch of the proof]
Since the force field is bounded and smooth and the initial density $\rho_0^{N,\e}$ is continuous, $\rho_t^{N,\e}$ is a classical nonnegative solution to the Fokker-Planck equation
\begin{equation}\label{eq:Ndensitymollified}
    \partial_t\rho_t^{N,\e}=\frac{1}{2}\nabla\cdot(\rho_t^{N,\e}\nabla g^{N,\e})+\Delta\rho_t^{N,\e},
\end{equation}
where
\[
g^{N,\e}(x)=\frac{1}{N}\sum_{i,j: i\neq j} g^{\e}(x_i-x_j) \Rightarrow \nabla_{x_1}g^{N,\e}(x)=-2F_1^{N,\e}.
\]

For \eqref{energy:est1}, one starts with \eqref{eq:Ndensitymollified} to obtain
\begin{equation}\label{u-estimate1}
    \frac{d}{dt}\langle \rho_t^{N,\e}, g^{N,\e}\rangle=- \left\langle \rho_t^{N,\e},\frac{1}{2}|\nabla g^{N,\e}|^2 \right\rangle-\left\langle \rho_t^{N,\e},\frac{2}{N}\sum_{i,j=1,i\neq j}^NJ_\e(x_i-x_j) \right\rangle.
\end{equation}
By exchangeability,
\[
\langle \rho^{N,\e}, |\nabla g^{N,\e}|^2\rangle=
N\langle \rho^{N,\e}, |\nabla_{x_1}g^{N,\e}|^2\rangle
=N\langle \rho^{N,\e}, |F_1^{N,\e}|^2\rangle.
\]
By dividing both sides by $2N$ and integrating on time, the equality in \eqref{energy:est1} follows. Moreover,
\begin{multline*}\label{eq:mollifiedinitial}
\frac{1}{2N}\langle \rho_0^{N,\e}, g^{N,\e}\rangle
=\frac{N-1}{2N}\iint_{(\bbR^d)^2} g^{\e}(x-y)\rho_0(x)\rho_0(y)\,dxdy \\
\le \frac{1}{2}\int_{\bbR^d} \nabla h_0(x)\cdot\nabla h_0^{\e}(x)\,dx
\le C\|\nabla h_0\|_2^2.
\end{multline*}
This holds because $\|\nabla h_0^{\e}\|_2\le \|\nabla h_0\|_2$ by Young's convolutional inequality.

Now by simple computations and integration by parts,
\begin{equation}
    \frac{d}{dt}H_N(f_t^{N,\e})=-I_N(f_t^{N,\e})-\frac{1}{N^2}\int_{\bbR^{Nd}}\sum_{i,j=1,i\neq j}^NJ_\e(x_i-x_j)\rho_t^{N,\e}dx,
\end{equation}
which gives the entropy relation in \eqref{enmo:est1} since $J_\e$ is non-negative.

For the moment estimate in \eqref{enmo:est1}, since $\bfX_t^{N,\e}$ is the solution to \eqref{regmodel}, one can deduce that
\begin{equation}\label{u-estimate2}
    |X_t^{i,\e}|^2\le 3|X_0^{i}|^2+\frac{3t}{N^2}\int_0^T\left|\sum_{j=1,j\neq i}^NF_\e(X_s^{i,\e}-X_s^{j,\e})\right|^2ds+6|B_t^i|^2 .
\end{equation}
Taking expectation of \eqref{u-estimate2}, and noting exchangeability, one has
\begin{align*}
\bbE\left[ \Big|\sum_{j=1,j\neq i}^NF_\e(X_s^{i,\e}-X_s^{j,\e}) \Big|^2\right] &=\bbE\left[\frac{1}{N}\sum_{i=1}^N
\Big|\sum_{j=1,j\neq i}^NF_\e(X_s^{i,\e}-X_s^{j,\e})\Big|^2\right]\\
&=N^2\langle \rho_s^{N,\e}, |F_1^{N,\e}|^2\rangle.
\end{align*}
Then, the moment estimate in \eqref{enmo:est1} follows from \eqref{energy:est1} directly.
\end{proof}
\begin{proof}[Proof for Theorem \ref{noncollision}]
First we restrict ourselves to a finite time period $[0,T]$. In order to show that the particles in \eqref{model} a.s. never collide, we consider system \eqref{regmodel} along with the stopping time
$$\tau_{\epsilon}=\inf\{t\ge0|\min_{i\neq j}|X_t^{i,\e}-X_t^{j,\e}|\le\e\}.$$
Since \eqref{model} and \eqref{regmodel} takes the same initial value, and by the fact that $F_\e(x)=F(x)$ whenever $|x|\ge\e$, for any $\e_1>\e$ and $\omega\in\Omega$, if we define
\begin{equation}\label{noncollision1}
    \hat{X_s}^\e(\omega)=\textbf{1}_{\tau_{\e_1}(\omega)\le t}X_s^\e(\omega)+\textbf{1}_{\tau_{\e_1}(\omega)> t}X_s^{\e_1}(\omega)
\end{equation}
then \eqref{noncollision1} is also a solution for system \eqref{regmodel} on $[0, t]$ since $F_\e(x)=F_{\e_1}(x)$ when $|x|\ge\e_1$. Therefore from the uniqueness of the solution (since $F_\e$ is Lipschitz over $\bbR^d$), we see that
\begin{equation}\label{noncollision2}
   \bbP(X_s^{\e_1}\textbf{1}_{\tau_{\e_1}> t}=X_s^\e\textbf{1}_{\tau_{\e_1}> t},\forall0\le s\le t)=1.
\end{equation}
Now we consider the set
\[
A_{\e, \e_1}:=\bigcap_{t\in\mathbb{Q}}\{X_s^{\e_1}\textbf{1}_{\tau_{\e_1}> t}=X_s^\e\textbf{1}_{\tau_{\e_1}> t},\forall0\le s\le t \},
\]
where $\mathbb{Q}$ is the set of rational numbers.
By \eqref{noncollision2} $\bbP(A_\e)=1$. For $\omega\in A_\e$, if $\tau_\e(\omega)<\tau_{\e_1}(\omega)$, then there exists a rational number $t\in\mathbb{Q}$ such that $\tau_\e(\omega)<t<\tau_{\e_1}(\omega)$, then by the definition of $A$ we see that $X_s^{\e_1}=X_s^\e$ for $0\le s\le t$, which contradicts with the assumption $\tau_\e(\omega)<t$. Therefore we have proved that when $\e_1>\e$, $\tau_\e\ge\tau_{\e_1}$ for a.s. $\oino$.

 We take $\e_n=\frac{1}{2^n}$. Consider
 \[
 A=\bigcap_{n\ge m\ge 1}A_{\epsilon_n, \epsilon_m}.
 \]
 From the discussion above, we see that $\{\tau_{\e_n}\}$ is non-decreasing when $n\to\infty$ for $\omega\in A$ and $\bbP(A)=1$.  
 
 Define
 \[
 A_0:=\left\{\tau_{\e_n}\uparrow+\infty \right\}.
 \]
 If we can show that
 \begin{equation}\label{noncollision3}
    \bbP(A_0)=1,
 \end{equation}
 then for $\omega\in A_0\cap A$ there exists an $M(\omega)$ such that $\tau_{\e_n}(\omega)>T$ when $n\ge M(\omega)$. This implies that $X_t^{\e_n}(\omega)=X_t^{\e_{M(\omega)}}(\omega)$ for $n\ge M(\omega)$ and $0\le t\le T$. Therefore we can define
\begin{equation}
    \tilde{X}_t(\omega)=X_t^{\e_{M(\omega)}}(\omega)
\end{equation}
whenever $\omega\in A_0\cap A$, and for other $\omega\in\Omega$ we just put $\tilde{X}_t(\omega)=X_0(\omega)$. Then $\tilde{X}_t$ satisfies \eqref{model} when $0\le  t\le T$ a.s. $\omega\in\Omega$,  which gives the existence of the solution.

For the uniqueness, suppose that $X_t$ is another solution that solves \eqref{model}. Consider the stopping time
$$\sigma_{\epsilon}=\inf\{t\ge0|\min_{i\neq j}|X_t^{i}-X_t^{j}|\le\e\}.$$
Similar to \eqref{noncollision1},
\begin{equation}
    \hat{X_s}^\e(\omega)=\textbf{1}_{\sigma_{\e}(\omega)\le t}X_s^\e(\omega)+\textbf{1}_{\sigma_{\e}(\omega)> t}X_s(\omega)
\end{equation}
gives a solution for \eqref{regmodel}, from which by using the uniqueness it is not hard to see the set
\[
A_1:=\bigcap_{t\in \mathbb{Q}}\{X_s\textbf{1}_{\sigma_{\e_n}> t}=X_s^\e\textbf{1}_{\sigma_{\e_n}> t},\forall0\le s\le t, n\ge1\}
\]
satisfies $\bbP(A_1)=1$.
Now for $\omega\in A_1$, if $\sigma_{\e}<\tau_{\e}$ for some $\e=\e_n$, since for fixed $\omega$, $X_t$ and $X_t^{\e}$ are continuous in $t$, from the definition of the stopping time we see
$$\min_{i\neq j}|X_{\sigma_{\e}}^i-X_{\sigma_{\e}}^j|=\e, \min_{i\neq j}|X_{\sigma_{\e}}^{i,\e}-X_{\sigma_{\e}}^j|>\e,$$
by continuity there exists a $t\in\mathbb{Q}$ such that
$$\min_{i\neq j}|X_t^i-X_t^j|-\min_{i\neq j}|X_t^{i,\e}-X_t^{j,\e}|<0,\ \ t<\sigma_\e,$$
which contradicts with the definition of $A_1$. This gives the fact that $\sigma_{\e_n}(\omega)\ge\tau_{\e_n}(\omega)$ as long as $\omega\in A_1$. Now if \eqref{noncollision3} holds then $\bbP(A_0\cap A\cap A_1)=1$, and for $\omega\in A_0\cap A\cap A_1$, we have  $X_t(\omega)=X_t^{\e_{M(\omega)}}(\omega)=\tilde{X}_t(\omega)$, which concludes the proof for uniqueness.

Now we show \eqref{noncollision3}. Since $\tau_{\e_n}$ is a.s. non-decreasing, to show $\tau_{\e_n}\uparrow+\infty$, a.s., it suffices to show that for any fixed $T$,
\begin{equation}\label{noncollision4}
    \lim_{\e\to0}\bbP(\tau_\e\le T)=0.
\end{equation}
We consider the unnormalized energy 
\[
\Phi_t^{\e,N}:=\frac{1}{N}\sum_{i,j=1,i\neq j}^Ng^\e(X_t^{i,\e}-X_t^{j,\e}).
\]
 Then we have the following basic fact
\begin{equation}\label{noncollision5}
    \{\tau_\e\le T\}\subset
    \left\{\sup_{t\in[0,T]}\Phi_{t\wedge\tau_\e}^{\e,N}
    \ge\Phi_{\tau_\e}^{\e,N}\right\}.
\end{equation}
Since $g^\e\in C_b^2(\bbR^d)$, by It\^{o}'s formula and the fact that $-\Delta g^\e(x)=J_\e(x)=0$ on $|x|\ge\e$, we get
\begin{multline}\label{noncollision6}
\Phi_t^{\e,N}=\Phi_0^{\e,N}-\frac{2}{N^2}\int_0^{t}\sum_{i=1}^N\left|\sum_{j=1,j\neq i}^NF^\e(X_s^{i,\e}-X_s^{j,\e}) \right|^2ds\\
    -\frac{2}{N}\sum_{i,j=1,i\neq j}^N\int_0^tJ_{\e}(X_s^{i,\e}-X_s^{j,\e})ds-M_{t}^\e,
\end{multline}
where
\[
M_t^\e=\frac{\sqrt{2}}{N}\sum_{i,j=1,i\neq j}^N\int_0^{t}F_\e(X_s^{i,\e}-X_s^{j,\e})\cdot(dB_s^i-dB_s^j)=\frac{2\sqrt{2}}{N}\sum_{i,j=1,i\neq j}^N\int_0^{t}F_\e(X_s^{i,\e}-X_s^{j,\e})\cdot dB_s^i.
\]
Since $F_\e$ is bounded, $M_t^\e$ is a martingale with
\begin{equation}\label{noncollision7}
\begin{split}
    \bbE[(M_t^{\e})^2]&=\frac{8}{N^2}\int_0^t\sum_{i=1}^N\bbE\left[\Big|(\sum_{j=1,j\neq i}^NF_\e(X_s^{i,\e}-X_s^{j,\e})) \Big|^2 \right]ds\\
    &=\int_0^t\langle \rho_s^{N,\e},2|\nabla g^{N,\e}|^2\rangle ds\\
    &=2N\int_0^t \langle \rho^{N,\e}, |F_1^{N,\e}|^2\rangle\,ds  \le CN\mathcal{E}(\rho_0).\\
\end{split}
\end{equation}
The last inequality comes from \eqref{energy:est1}. Combining \eqref{noncollision5} and \eqref{noncollision6}, from the positivity of $g^\e$ (since $d\ge3$) we have
\begin{align}\label{noncollision8}
    \{\tau_\e\le T\} &\subset \left\{(\Phi_0^{\e,N}-\inf_{0\le t\le T}M_t^\e)>\frac{1}{N}g(\e) \right\} \\
     &\subset\{\Phi_0^{\e,N}>R\}\cup \left\{\sup_{0\le t\le T}(-M_t^\e)>\frac{1}{N}g(\e)-R \right\}.
\end{align}
for any $R>0$. Here $g(\e)=C_d\e^{2-d}$. We notice that
\begin{equation}
    \bbE[\Phi_0^{\e,N}]=(N-1)\int_{\bbR^{2d}}\rho_0(x)\rho_0(y)g^\e(x-y)dxdy
    \le C(N-1)\mathcal{E}(\rho_0).
\end{equation}
Therefore, Markov's inequality gives
\begin{equation}\label{noncollision9}
\bbP(\Phi_0^{\e,N}>R)\le\frac{1}{R}(N-1)C\mathcal{E}(\rho_0).
\end{equation}
 For the second term, we apply Doob's inequality for martingales (p.203, Theorem 7.31 in \cite{klebaner2012})
\begin{equation}\label{noncollision10}
    \begin{split}
        \bbP\left(\sup_{0\le t\le T}(-M_t^\e)>\frac{1}{N}g(\e)-R\right)  &\le
        \frac{N}{g(\e)-NR} \left( \bbE[|\sup_{0\le t\le T}(-M_t^\e)|^2] \right)^{\frac{1}{2}}\\
        &\le \frac{4N}{g(\e)-NR}(\bbE[|M_T^\e|^2])^{\frac{1}{2}}\le\frac{C(N,d,\mathcal{E}(\rho_0))}{g(\e)-NR},
    \end{split}
\end{equation}
where we used \eqref{noncollision7}. Combining \eqref{noncollision8}, \eqref{noncollision9} and \eqref{noncollision10}
\begin{equation}
    \bbP(\tau_\e\le T)\le C(N)\left(\frac{1}{R}+\frac{1}{g(\e)-NR} \right).
\end{equation}
We take $R=g(\e)^{\frac{1}{2}}$ and the conclusion follows from the fact that $g(\e)=C(d)\e^{2-d}\to\infty$ as $\e\to0$.

Lastly, we conclude the global existence and uniqueness. For $k\ge1$, suppose $X_t^{(k)}$ is the a.s. unique solution to \eqref{model} on the time interval $t\in[0,k]$. From the previous local existence and uniqueness proof, we find that the set
\[
S_0=\bigcap_{\ell\ge 1}\Big\{X_t^{(k)}=X_t^{(\ell)},\ \forall k\ge \ell,t\in[0,\ell] \Big\}
\]
has probability 1. Therefore if we define
$$\tilde{X}_t(\omega)=X_t^{([t+1])}(\omega)\textbf{1}_{S_0}(\omega)+X_0(\omega)\textbf{1}_{S_0^c}(\omega)$$
then $\tilde{X}_t$ satisfies \eqref{model} for all $t>0$ a.s.. (Here $[t]$ rounds $t$ to the nearest integer). Meanwhile, if $X_t$ is another global solution for $t>0$, then by local uniqueness we know that for any $k\ge1$, $X_t=X_t^{(k)},\forall 0\le t\le k$, a.s.. This implies that $X_t=\tilde{X}_t$, which gives the global uniqueness.
\end{proof}

Next we state some useful estimates for the $N$-particle system \eqref{model}.
\begin{proposition}\label{u-estimateII}
Suppose $\{X_t\}_{t\ge0}$ is the solution for \eqref{model} with joint distribution $f_t^N$. Then $f_t^N$ has a density function $\rho_t^N$. Moreover, we have the following estimates:
\begin{equation}\label{entropy:est2}
    H_N(f_t^{N})+\int_0^tI_N(f_s^{N})ds\le H_1(f_0)(=H_N(f_0^N)),
\end{equation}
\begin{equation}\label{energy:est2}
    \mathbb{E}\mathscr{E}_N(t)+\mathbb{E}\int_0^t\frac{1}{N}\sum_{i}\Big|\frac{1}{N}\sum_{j: j\neq i}F(X_j-X_i) \Big|^2 ds\le \frac{N-1}{N}\mathcal{E}(\rho_0),
\end{equation}
\begin{equation}\label{moment:est2}
    \bbE[|X_t^{i}|^2]\le 3\bbE[|X_0^1|^2]+Ct \mathcal{E}(\rho_0)+6td.
\end{equation}
\end{proposition}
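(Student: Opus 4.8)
The plan is to obtain all three estimates by passing to the limit $\e\to 0$ in the corresponding bounds for the regularized system \eqref{regmodel} collected in Lemma \ref{u-estimateI}. The key input is already supplied by the proof of Theorem \ref{noncollision}: along the dyadic sequence $\e_n=2^{-n}$, for a.e. $\oino$ the regularized trajectories stabilize, i.e. $X_t^{i,\e_n}(\omega)=\tilde X_t^i(\omega)=X_t^i(\omega)$ for all $n\ge M(\omega)$ and $t\in[0,T]$, and the particles never collide. In particular $X_t^{i,\e_n}\to X_t^i$ a.s. in $(\bbR^d)^N$ for each fixed $t$, and since by Lemma \ref{mollification} one has $F_\e=F$ and $g^\e=g$ off $\{|x|<\e\}$, we also get $F_\e(X_s^{j,\e_n}-X_s^{i,\e_n})\to F(X_s^j-X_s^i)$ and $g^\e(X_s^{i,\e_n}-X_s^{j,\e_n})\to g(X_s^i-X_s^j)$ a.s., for every $s$.

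The moment and energy estimates are then immediate from Fatou's lemma, since (using $d\ge 3$, so that $g\ge 0$) all the relevant integrands are nonnegative. For \eqref{moment:est2} I take the liminf in $\e_n$ of the moment bound in \eqref{enmo:est1}. For \eqref{energy:est2} I first drop the nonnegative cutoff term $\frac{N-1}{N}\int_0^t\langle \rho_s^{N,\e},J^{\e}(x_1-x_2)\rangle\,ds$ from the identity \eqref{energy:est1}, and then apply Fatou to both the energy term $\langle\rho_t^{N,\e},E^{N,\e}\rangle=\bbE\big[\frac{1}{2N^2}\sum_{i\neq j}g^\e(X_t^{i,\e}-X_t^{j,\e})\big]=\bbE\,\mathscr{E}_N(t)+o(1)$ and the dissipation term, using exchangeability to identify $\langle\rho_s^{N,\e},|F_1^{N,\e}|^2\rangle$ with $\bbE\,\frac1N\sum_i\big|\frac1N\sum_{j\neq i}F_\e(X_s^j-X_s^i)\big|^2$ before taking the liminf.

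The entropy estimate \eqref{entropy:est2} is the real point. Since $X_t^{\e_n}\to X_t$ a.s., the joint laws converge weakly, $f_t^{N,\e_n}\rightharpoonup f_t^N$ for each fixed $t$ (and $f_s^{N,\e_n}\rightharpoonup f_s^N$ for each $s$). I would then invoke the weak lower semicontinuity of the entropy and of the Fisher information on $\bfP((\bbR^d)^N)$,
$$H_N(f_t^N)\le\liminf_n H_N(f_t^{N,\e_n}),\qquad I_N(f_s^N)\le\liminf_n I_N(f_s^{N,\e_n}),$$
combining the second with Fatou in $s$ to get $\int_0^t I_N(f_s^N)\,ds\le\liminf_n\int_0^t I_N(f_s^{N,\e_n})\,ds$. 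Dropping the nonnegative cutoff term from the identity in \eqref{enmo:est1} and using the superadditivity of $\liminf$ yields $H_N(f_t^N)+\int_0^t I_N(f_s^N)\,ds\le H_1(f_0)$. Finiteness of $H_N(f_t^N)$ then forces $f_t^N$ to be absolutely continuous, giving the density $\rho_t^N$; to guarantee the entropy is genuinely finite (not $-\infty$) I use the moment bound \eqref{moment:est2} together with the lower bound of Lemma \ref{entropy2}.

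The main obstacle is this last step, and specifically the lower semicontinuity of $H$ under weak convergence: unlike the energy and the moments, the entropy integrand is not sign-definite, so Fatou does not apply directly. I would justify the lsc through the dual (Gibbs variational) representation $H(f)=\sup_{\phi}\{\int\phi\,df-\int e^\phi\,dx\}$, the supremum taken over bounded continuous $\phi$ with $e^\phi\in L^1$, which exhibits $H$ as a supremum of weakly continuous affine functionals and hence as weakly lsc, while the lower bound from Lemma \ref{entropy2} keeps the limit finite. The lsc of $I$ is standard (e.g. writing $I(f)=4\int|\nabla\sqrt{\rho}|^2$ as a supremum of linear functionals of $\sqrt{\rho}$).
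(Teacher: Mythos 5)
Your overall architecture coincides with the paper's proof: the paper likewise deduces \eqref{moment:est2} and \eqref{energy:est2} from the identities of Lemma \ref{u-estimateI} by combining the a.s.\ stabilization $X_t^{\e_n}(\omega)=X_t(\omega)$ for $n$ large (from the proof of Theorem \ref{noncollision}) with exchangeability and Fatou's lemma, and it obtains \eqref{entropy:est2} from the lower semicontinuity of $H$ and $I$ under weak convergence, for which it simply cites Theorems 5.4 and 5.7 of \cite{hauray2014}. The one place you genuinely diverge is the construction of the density $\rho_t^N$: the paper first proves the uniform bound $\int_{\bbR^{Nd}}\rho_t^{N,\e}|\log\rho_t^{N,\e}|\,dx\le C$ from the entropy identity and second moments, extracts a weak $L^1$ limit via Dunford--Pettis on balls plus a diagonal argument, upgrades to weak convergence in $L^1(\bbR^{Nd})$ by tightness, and identifies the limit with $f_t^N$ using $X_t^{\e_n}\to X_t$ a.s.; you instead read off absolute continuity of $f_t^N$ directly from $H_N(f_t^N)<+\infty$ and the convention in \eqref{eq:entropy} that $H=+\infty$ on singular measures. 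Given that convention, your shortcut is legitimate and arguably cleaner, \emph{provided} the lower semicontinuity step it leans on is sound.

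That is exactly where your proposal has a genuine flaw. Your dual representation is taken over ``bounded continuous $\phi$ with $e^\phi\in L^1$'', but this class is empty on $\bbR^{Nd}$: if $\phi$ is bounded then $e^\phi\ge e^{-\|\phi\|_\infty}>0$, which is not integrable against Lebesgue measure on the whole space. More substantively, the unqualified claim that $H$ (relative to Lebesgue) is weakly lsc on all of $\bfP((\bbR^d)^N)$ is false: taking $f_n=(1-\frac1n)f+\frac1n u_{R_n}$ with $u_{R_n}$ uniform on $B(0,R_n)$ and $R_n\to\infty$ fast, one has $f_n\rightharpoonup f$ while $H(f_n)\to-\infty$, so entropy can ``leak to $-\infty$'' at spatial infinity. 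The correct statement requires moment control, which is precisely what \cite{hauray2014} (and \cite[Lemma 4.2]{fournier2014}, which the paper invokes elsewhere) build in, and what your setting supplies through the uniform second-moment bound in \eqref{enmo:est1}. To repair your argument self-containedly, take $\phi$ continuous, bounded \emph{above}, with $e^\phi\in L^1$ and negative part growing strictly slower than $|x|^2$ (e.g.\ $\phi^-\le\lambda(1+|x|^2)^{p/2}$ with $p<2$, in the spirit of Lemma \ref{entropy2}); along sequences with $\sup_n M_2(f_n)<\infty$ the maps $f\mapsto\int\phi\,df-\int e^\phi\,dx+1$ are then continuous by uniform integrability, their supremum still equals $H$ (truncate $\log\rho$ from above and below), and lsc follows on the relevant sequences. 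Alternatively, just cite the moment-augmented lsc results as the paper does. Your lsc argument for $I$ via $I(f)=4\int|\nabla\sqrt{\rho}|^2\,dx$, and the rest of the limit passage (superadditivity of $\liminf$, Fatou in $s$, dropping the nonnegative cutoff terms), are fine.
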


\begin{proof}
Throughout this proof, $C$ denotes the constant which depends on $N,\rho_0,d$ and so on, but not on $\e$. Note that for any density $\rho\in L^1(\bbR^{Nd})$, $|\rho\log^-\rho|<1$. For $\alpha<1$,
\begin{equation}
    \int_{|x|\ge1}|\rho(x)\log^-\rho(x)|\,dx\le C \int_{|x|\ge1}|\rho|^\alpha dx\le C\left(\int_{\bbR^{Nd}}|x|^2\rho dx\right)^{\alpha}\Big(\int_{|x|\ge1}|x|^{-\frac{2\alpha}{1-\alpha}}dx\Big)^{1-\alpha}.
\end{equation}
If we take $\frac{Nd}{Nd+2}<\alpha<1$, then from the uniform estimate \eqref{enmo:est1} we deduce
\begin{equation}\label{eq:L1logL1}
    \int_{\bbR^{Nd}}|\rho_t^{N,\e}(x)\log\rho_t^{N,\e}(x)|dx \le C ,
\end{equation}
which means that $\rho_t^{N,\e}$ is uniformly integrable in $L^1(\bbR^{Nd})$. 
Consider the sequence $\{\rho_t^{N,\e_n}: \e_n=\frac{1}{2^n}\}$. Denote by $B_r=\{|x|\le r\}$ the ball in $\bbR^{Nd}$ centered at the origin with radius $r$.
By Dunford-Pettis theorem (p.412, Theorem 12 in \cite{royden2010}), there exists a subsequence $\{\rho_{t,(1)}^{N,\e_n}\}$ converging weakly in $L^1(B_1)$ to some $\rho^N_{t,(1)}$.  This subsequence has a further subsequence converging weakly in $L^1(B_2)$ to some $\rho^N_{t,(2)}$. From the uniqueness of the weak limit we see that $\rho^N_{t,(2)}=\rho^N_{t,(1)}$ a.e. on $B_1$. Proceeding this process and taking the diagonal sequence, there exists a subsequence (without relabeling) and a $\rho_t^N\in L^1_{\loc}(\bbR^{Nd})$ such that
\begin{equation}\label{u-estimateII.1}
    \rho_t^{N,\e_n}\rightharpoonup\rho_t^N~\text{in}~ L^1(B_k), \forall k\ge1.
\end{equation}
\eqref{u-estimateII.1} also gives $\|\rho_t^N\|_{L^1(\bbR^{Nd})}\le1$. Now the moment estimate in \eqref{enmo:est1} gives the tightness of $\rho_t^{N,\e}$, i.e., $\int_{|x|\ge M}\rho^{N,\e}(x)dx$ goes to 0 as $M\to\infty$ uniformly in $\e$, by \eqref{u-estimateII.1} it is not hard to observe that for any $\phi\in L^\infty(\bbR^{Nd})$, $\langle \phi,\rho_t^{N,\e_n}\rangle\to\langle\phi,\rho_t^{N}\rangle$, i.e.,
\begin{equation}
    \rho_t^{N,\e_n}\rightharpoonup\rho_t^N~\text{in}~ L^1(\bbR^{Nd}).
\end{equation}
In particular, $\|\rho_t^N\|_{L^1(\bbR^{Nd})}=1$.
From the proof of Theorem \ref{noncollision} we see that $X_t^{\e_n}\to X_t$, a.s., therefore for any $\varphi\in C_b(\bbR^{Nd})$, we have
\begin{equation}
    \langle \varphi,\rho_t^N\rangle=\lim_{n\to\infty}\langle\varphi,\rho_t^{N,\e_n}\rangle=\lim_{n\to\infty}\bbE[\varphi(X_t^{\e_n})]=\bbE[\varphi(X_t)]=\langle \varphi,f_t^N\rangle
\end{equation}
This gives the fact that $f_t^N$ has density $\rho_t^N$. Note that
\begin{equation}
\langle \rho_t^{N,\e}, E^{N,\e}\rangle+\int_0^t\langle \rho_s^{N,\e}, |F_1^{N,\e}|^2\rangle ds
=\bbE[E^{N,\e}(\bfX_t^{N,\e})]+\int_0^t\bbE[|F_1^{N,\e}(\bfX_s^{N,\e})|^2]ds
\end{equation}
and that for a.s. $\oino$, $\bfX_t^{N,\e_n}(\omega)=\bfX_t^{N}(\omega)$ when $n$ is big enough, from the Fatou's lemma, the exchangeability, and \eqref{energy:est1} we obtain \eqref{energy:est2}. Similarly, \eqref{moment:est2} holds. Now combining the entropy estimate in \eqref{enmo:est1} and the fact that the functionals $H$ and $I$ are both lower semicontinuous with respect to weak convergence (Theorem 5.4 and Theorem 5.7 in \cite{hauray2014}), we see that
\begin{equation}
     H_N(f_t^{N})+\int_0^tI_N(f_s^{N})ds\le\liminf_{n\to\infty} \left( H_N(f_t^{N,\e_n})+\int_0^tI_N(f_s^{N,\e_n})ds \right) \le H_1(\rho_0),
\end{equation}
which gives \eqref{entropy:est2}.

\end{proof}

\subsection{The weak convergence of the empirical measures}
In this part we recall the results in \cite{liuyang2016} for the weak convergence of the empirical measures.
\begin{proposition}[\cite{liuyang2016}, Lemma 3.1]\label{tightI}
For any $N\ge2$ and $d\ge3$, let $\{(X_t^{i,N})_{0\le t\le T}\}_{i=1}^N$ be the unique strong solution to \eqref{model} with the i.i.d. initial data $\{X_0^{i,N}\}_{i=1}^N$. Suppose the common density $\rho_0(x)\in L^{\frac{2d}{d+2}}(\bbR^d)\cap L^1(\bbR^d,(1+|x|^2)dx)$ and $H_1(\rho_0)<\infty$. Recall the empirical measure $\mu^N$ defined in \eqref{eq:empirical}. Then

(i) The sequence $\{\mathcal{L}(X^{1,N})\}$ is tight in $\bfP(C([0,T];\bbR^d))$;

(ii) The sequence $\{\mathcal{L}(\mu^N)\}$ is tight in $\bfP(\bfP(C([0,T];\bbR^d)))$.
\end{proposition}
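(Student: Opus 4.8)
The plan is to establish path-space tightness of the single-particle law through the Aldous criterion, and then to obtain the tightness of the empirical-measure laws from the general equivalence principle for exchangeable systems.

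For part (i), I would first record the marginal tightness in $\bbR^d$: the moment bound \eqref{moment:est2} gives $\sup_N\sup_{t\in[0,T]}\bbE|X_t^{1,N}|^2\le C$, so by Chebyshev $\{\mathcal{L}(X_t^{1,N})\}_N$ is tight for each fixed $t$. The heart of the matter is the time-regularity (the Aldous condition). Using Theorem \ref{noncollision} to ensure the drift is a.s. well defined, I write the single-particle equation as $X_t^{1,N}=X_0^{1,N}+\int_0^t b_1^N(u)\,du+\sqrt{2}\,B_t^1$ with the mean-field drift $b_1^N(u)=\frac1N\sum_{j\neq 1}F(X_u^1-X_u^j)$, and split an increment over $[\tau,\tau+\theta]$ (with $\tau\le T$ a stopping time and $\theta\le\delta$ deterministic) into its drift and Brownian parts. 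For the drift, Cauchy--Schwarz in time over the deterministic interval yields $\big|\int_\tau^{\tau+\theta}b_1^N\,du\big|^2\le\theta\int_\tau^{\tau+\theta}|b_1^N|^2\,du\le\theta\int_0^T|b_1^N|^2\,du$, so that, invoking exchangeability together with the energy estimate \eqref{energy:est2},
\[
\bbE\Big|\int_\tau^{\tau+\theta}b_1^N\,du\Big|^2\le\theta\,\bbE\int_0^T|b_1^N|^2\,du\le\theta\,\mathcal{E}(\rho_0),
\]
uniformly in $N$ and in $\tau$. For the Brownian part, the strong Markov property gives $\bbE|\sqrt{2}(B_{\tau+\theta}^1-B_\tau^1)|^2=2d\theta$. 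Combining these with Chebyshev's inequality, for any $\eta>0$,
\[
\sup_N\,\sup_{\tau\le T,\,\theta\le\delta}\bbP\big(|X_{(\tau+\theta)\wedge T}^{1,N}-X_\tau^{1,N}|>\eta\big)\le\frac{C\delta}{\eta^2},
\]
which tends to $0$ as $\delta\to0$. Together with the marginal tightness, this verifies the Aldous criterion and gives tightness of $\{\mathcal{L}(X^{1,N})\}$ in $\bfP(C([0,T];\bbR^d))$.

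For part (ii), I would invoke the standard fact for exchangeable systems (e.g. \cite[Proposition 2.2]{sznitman1991}) that $\{\mathcal{L}(\mu^N)\}$ is tight in $\bfP(\bfP(C([0,T];\bbR^d)))$ if and only if the associated intensity (first-moment) measures are tight in $\bfP(C([0,T];\bbR^d))$. Since the particles are exchangeable, the intensity measure of $\mu^N$ is exactly $\bbE\mu^N=\mathcal{L}(X^{1,N})$, whose tightness was established in part (i); hence (ii) follows at once.

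The main obstacle is the singularity of the Coulomb force $F$, which a priori renders even the drift increment hard to control; everything hinges on the a priori energy bound \eqref{energy:est2}, which supplies the uniform-in-$N$ estimate $\bbE\int_0^T|b_1^N|^2\,du\le\mathcal{E}(\rho_0)$. This is precisely why the Aldous criterion is preferable here to the Kolmogorov moment criterion: the former needs only an $L^2$-in-time, $L^1$-in-probability bound on the drift, whereas the latter would require a uniform-in-$N$ modulus of continuity for $u\mapsto\bbE|b_1^N(u)|^2$, which the energy estimate alone does not provide.
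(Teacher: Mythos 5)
Your proposal is correct, and part (ii) coincides with the paper's argument (exchangeability plus \cite[Proposition 2.2]{sznitman1991}), but for part (i) you take a genuinely different route. The paper works pathwise: from the integrated equation it derives $|X_t^{1,N}-X_s^{1,N}|\le (t-s)^{1/2}(Z_N+U_N)$, where $U_N=\frac1N\bigl(\int_0^T\bigl(\sum_{j\neq1}F(X_t^1-X_t^j)\bigr)^2dt\bigr)^{1/2}$ is controlled in $L^2(\bbP)$ by the energy estimate \eqref{energy:est2} and $Z_N$ is a H\"older-$\frac12$ constant of the Brownian path; it then exhibits explicit compact sets $K$ of equicontinuous paths via Arzel\`a--Ascoli and bounds $\bbP(X^{1,N}\notin K)$ by Chebyshev. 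You instead verify the Aldous criterion with stopping times, using exactly the same two inputs: the energy bound $\bbE\int_0^T|b_1^N|^2\,du\le\mathcal{E}(\rho_0)$ (valid since $g\ge0$ for $d\ge3$, so the energy term in \eqref{energy:est2} can be dropped) and the moment bound \eqref{moment:est2} for marginal tightness. What each approach buys: the paper's argument is elementary and produces concrete compact sets, but as written it is delicate at the Brownian term --- the supremum $Z_N=\sup_{s<t}\sqrt2|B_t^1-B_s^1|/(t-s)^{1/2}$ is in fact a.s.\ \emph{infinite} by L\'evy's modulus of continuity, so the exponent must be lowered to some $\alpha<\frac12$ (an easy repair); your Aldous route handles the Brownian increment in $L^2$ at each scale and sidesteps this issue entirely, and, as you note, it only needs an $L^2$-in-time drift bound rather than a modulus for $u\mapsto\bbE|b_1^N(u)|^2$. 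One small point you should make explicit: the Aldous criterion yields tightness in the Skorokhod space $D([0,T];\bbR^d)$; since all the processes here have continuous paths and $C([0,T];\bbR^d)$ is closed in $D([0,T];\bbR^d)$ with the uniform topology as the relative topology, tightness in $\bfP(C([0,T];\bbR^d))$ follows --- standard, but worth a sentence.
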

Here $\mathcal{L}(X^{1,N})$ is the law of $X^{1,N}$, i.e. $\mathcal{L}(X^{1,N})(A) = \bbP(X^{1,N}\in A)$ for $A\subset C([0,T];\bbR^d)$ Borel measurable; Similar for $\mathcal{L}(\mu^N)$.
For the convenience of the readers, we provide a concise proof in Appendix \ref{app:missingpf}.

We consider the projection $\pi_t: (C([0,T];\bbR^d), \mathcal{B})\to \bbR^d$ where $\mathcal{B}$ is the standard Borel $\sigma$-algebra in $C([0,T];\bbR^d)$:
\[
\pi_t(X)=X(t).
\]
Then, for some measure $\nu \in \bfP(C([0,T];\bbR^d))$,  we define the time marginal $\nu_t$ as the pushforward of $\nu$ under $\pi_t$:
\begin{gather}\label{eq:mut}
\nu_t:=(\pi_t)_{\#}\nu
\end{gather}
or
\[
\nu_t(E)=\nu(\pi_t^{-1}(E)),~\forall E\in \bbR^d, \text{Borel measurable.}
\]
Consequently, we have
\[
\Big( t\mapsto \nu_t \Big)\in C([0, T]; \bfP(\mathbb{R}^d)),
\]
where $\bfP(\mathbb{R}^d)$ is equipped with the topology of weak convergence.

We easily conclude the following by change of measures, i.e., for $T:(\mathcal{X}, \nu)\to (\mathcal{Y}, \tilde{\nu})$ with $\tilde{\nu}(A) = \nu(T^{-1}(A))$, one has $\int_\mathcal{Y} fd\tilde{\nu} = \int_\mathcal{X} f\circ Td\nu$. 
\begin{lemma}\label{substitution}
Suppose $\nu\in\mathbf{P}(C([0,T];\bbR^d))$ with time marginal $\nu_t\in\bfP(\bbR^d)$, and $\psi$ is a Borel measurable function on $\bbR^d$ . Then for $0\le t\le T$, the equation
\begin{equation}
    \int_{C([0,T];\bbR^d)}\psi(X_t)\nu(dX)=\int_{\bbR^d}\psi(x)\nu_t(dx)
\end{equation}
holds if either side is integrable. Similarly,
for the product space $C([0,T];\bbR^d)\times C([0,T];\bbR^d)$ and Borel measurable function $\psi$ on $\bbR^d\times\bbR^d$,
\begin{multline}\label{substitutionII}
\iint_{C([0,T];\bbR^d)^2}\psi(X_t, Y_t)\nu(dX)\nu(dY)=\int_{C([0,T];\bbR^d)}\int_{\bbR^d}\psi(x,Y_t)\nu_t(dx)\nu(dY)\\
=\iint_{\bbR^d\times\bbR^d}\psi(x,y)\nu_t(dx)\nu_t(dy)
\end{multline}
if either side of \eqref{substitutionII} is integrable.
\end{lemma}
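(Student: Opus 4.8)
The plan is to derive both identities directly from the abstract change-of-variables formula for pushforward measures recalled immediately before the statement: if $\tilde{\nu}=T_{\#}\nu$, then $\int f\,d\tilde{\nu}=\int f\circ T\,d\nu$ whenever either integral is well defined. The only map needed is the evaluation (projection) $\pi_t\colon C([0,T];\bbR^d)\to\bbR^d$, $\pi_t(X)=X(t)$, which is continuous for the uniform topology on $C([0,T];\bbR^d)$ and hence Borel measurable; by definition $\nu_t=(\pi_t)_{\#}\nu$.

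For the first identity I would simply apply the formula with $T=\pi_t$ and $f=\psi$. Since $\psi\circ\pi_t(X)=\psi(X_t)$ and $\nu_t=(\pi_t)_{\#}\nu$, the formula reads
\[
\int_{C([0,T];\bbR^d)}\psi(X_t)\,\nu(dX)=\int_{\bbR^d}\psi(x)\,\nu_t(dx),
\]
and is valid as soon as one side is integrable, because Borel measurability of $\psi$ together with continuity of $\pi_t$ guarantees that $\psi\circ\pi_t$ is Borel. This is the entire content of the first part.

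For the product identity I would first observe that the product map $\Pi_t:=\pi_t\times\pi_t\colon C([0,T];\bbR^d)^2\to\bbR^d\times\bbR^d$, $(X,Y)\mapsto(X_t,Y_t)$, pushes the product measure forward to the product of the marginals: $(\Pi_t)_{\#}(\nu\otimes\nu)=(\pi_t)_{\#}\nu\otimes(\pi_t)_{\#}\nu=\nu_t\otimes\nu_t$. This holds because a product map sends a product measure to the product of the pushforwards; it suffices to check equality on measurable rectangles $A\times B$, where both sides equal $\nu_t(A)\nu_t(B)$, and rectangles form a $\pi$-system generating the product $\sigma$-algebra. Applying the change-of-variables formula with $T=\Pi_t$ and $f=\psi$ then yields the outermost equality
\[
\iint_{C([0,T];\bbR^d)^2}\psi(X_t,Y_t)\,\nu(dX)\nu(dY)=\iint_{\bbR^d\times\bbR^d}\psi(x,y)\,\nu_t(dx)\nu_t(dy).
\]
The middle expression is recovered by Fubini: for fixed $Y$, the first identity applied to the section $x\mapsto\psi(x,Y_t)$ gives $\int_C\psi(X_t,Y_t)\,\nu(dX)=\int_{\bbR^d}\psi(x,Y_t)\,\nu_t(dx)$, and integrating this against $\nu(dY)$ produces the middle term; a second application of the first identity to the function $Y\mapsto\int_{\bbR^d}\psi(x,Y_t)\,\nu_t(dx)$, which depends on $Y$ only through $Y_t$, then returns the right-hand side.

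The only points requiring care---rather than a genuine obstacle---are measurability and the integrability bookkeeping. Continuity of $\pi_t$ settles the former. For the latter, the phrase ``if either side is integrable'' is justified by applying Tonelli to $|\psi|$ first: finiteness of any one of the displayed integrals for $|\psi|$ forces finiteness of all of them, since they are all equal to $\iint|\psi|\,d(\nu_t\otimes\nu_t)$, after which Fubini legitimizes interchanging the order of integration and removing the absolute values for $\psi$ itself. Thus the lemma is essentially a bookkeeping consequence of the pushforward change-of-variables formula, and I do not anticipate any substantive difficulty.
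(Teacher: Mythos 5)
Your proposal is correct and follows essentially the same route as the paper, which simply invokes the change-of-variables formula for pushforward measures ($\int f\,d(T_{\#}\nu)=\int f\circ T\,d\nu$ with $T=\pi_t$, and its product version together with Fubini--Tonelli) and leaves the details implicit. Your write-up merely fills in the routine verifications (the pushforward of a product measure under $\pi_t\times\pi_t$, checked on rectangles, and the integrability bookkeeping via Tonelli), all of which are exactly what the paper's one-line justification presupposes.
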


Recall that a sequence of random variables $Z_n$ taking values in some Polish space $\mathcal{X}$ converges in law to $Z$ means that $\mathbb{E}\varphi(Z_n)\to \mathbb{E}\varphi(Z)$ for any $\varphi\in C_b(\mathcal{X})$ (i.e. bounded continuous functions). The following lemma gives the consequence of the tightness in Proposition \ref{tightI}.
\begin{lemma}\label{lmm:convergesinlaw}
\begin{enumerate}
\item There is a subsequence of the empirical measures, $\mu^N\in\bfP(C([0,T];\bbR^d))$ (without relabeling), and a random measure $\mu:(\Omega,\mathcal{F},\bbP)\to\bfP(C([0,T];\bbR^d))$ such that
\begin{equation}
    \mu^N\to\mu\ {\rm in \ law\ as}\ N\to\infty.
\end{equation}
(Or $\mathcal{L}(\mu^N)$ converges weakly to $\mathcal{L}(\mu)$ in $\bfP(\bfP(C([0, T]; \bbR^d)))$.)

\item For the subsequence in 1, $\mu^N_t$, as $\bfP(\bbR^d)$ valued random measures, converge in law to $\mu_t$. In other words, $\mathcal{L}(\mu_t^N)$ converges weakly to $\mathcal{L}(\mu_t)$ in $\bfP(\bfP(\bbR^d))$.
\end{enumerate}
\end{lemma}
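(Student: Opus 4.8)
The plan is to derive both parts from the tightness already established in Proposition \ref{tightI}, using Prokhorov's theorem for the existence of the limit and the continuous mapping theorem to pass to the time marginals.

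For part 1, I would first note that since $C([0,T];\bbR^d)$ is a Polish space, the space $\bfP(C([0,T];\bbR^d))$ equipped with the topology of weak convergence is again Polish, and hence so is $\bfP(\bfP(C([0,T];\bbR^d)))$. By Proposition \ref{tightI}(ii), the family $\{\mathcal{L}(\mu^N)\}_N$ is tight in this space, so Prokhorov's theorem guarantees that it is sequentially relatively compact in the weak topology. I would then extract a weakly convergent subsequence $\mathcal{L}(\mu^{N_k})\rightharpoonup\Pi$ for some $\Pi\in\bfP(\bfP(C([0,T];\bbR^d)))$, and realize $\Pi$ as the law of a random measure $\mu$, enlarging the probability space if necessary. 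Since convergence in law depends only on the laws, this yields $\mu^{N_k}\to\mu$ in law, and relabeling the subsequence gives the claim.

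For part 2, the key observation is that for each fixed $t\in[0,T]$ the time-marginal map
\[
\Phi_t:\bfP(C([0,T];\bbR^d))\to\bfP(\bbR^d),\qquad \Phi_t(\nu)=\nu_t=(\pi_t)_{\#}\nu,
\]
is continuous with respect to the weak topologies. Indeed, for any $\varphi\in C_b(\bbR^d)$ the function $X\mapsto\varphi(X_t)=\varphi(\pi_t(X))$ lies in $C_b(C([0,T];\bbR^d))$ because $\pi_t$ is continuous; hence if $\nu^k\rightharpoonup\nu$ weakly, then by the change-of-variables identity in Lemma \ref{substitution},
\[
\int_{\bbR^d}\varphi\,d\nu^k_t=\int_{C([0,T];\bbR^d)}\varphi(X_t)\,\nu^k(dX)\longrightarrow\int_{C([0,T];\bbR^d)}\varphi(X_t)\,\nu(dX)=\int_{\bbR^d}\varphi\,d\nu_t,
\]
which is exactly $\nu^k_t\rightharpoonup\nu_t$. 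With the continuity of $\Phi_t$ in hand, the continuous mapping theorem applied to $\mu^{N_k}\to\mu$ in law from part 1 gives $\Phi_t(\mu^{N_k})=\mu^{N_k}_t\to\Phi_t(\mu)=\mu_t$ in law, for each fixed $t$, where $\mu_t:=(\pi_t)_{\#}\mu$ is the time marginal of the limit.

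The substantive content of the argument is entirely in part 1, and specifically in the two structural facts that weak convergence on $\bfP$ of a Polish space is metrizable (so Prokhorov applies in the iterated space) and that the abstract limit $\Pi$ can be realized as the law of an honest random measure $\mu$. Neither is a genuine obstacle; the realization step is the only one requiring care about the underlying probability space, and since the notion of convergence in law depends only on $\mathcal{L}(\mu^N)$ and $\mathcal{L}(\mu)$, one is free to place $\mu$ on the canonical space $(\bfP(C([0,T];\bbR^d)),\Pi)$ or to enlarge $(\Omega,\mathcal{F},\bbP)$. Part 2 is then a soft consequence of continuity and carries no real difficulty.
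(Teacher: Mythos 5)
Your proposal is correct and follows essentially the same route as the paper: part 1 is exactly tightness from Proposition \ref{tightI}(ii) plus Prokhorov's theorem, and part 2 rests on the continuity of the time-marginal map $\nu\mapsto\nu_t$, verified via test functions $\varphi\in C_b(\bbR^d)$ and the identity of Lemma \ref{substitution}, just as in the paper. The only cosmetic difference is that you invoke the continuous mapping theorem by name, whereas the paper proves that step by hand, composing a bounded continuous functional $\Gamma$ on $\bfP(\bbR^d)$ with the marginal map to get $\Gamma_1(\nu)=\Gamma(\nu_t)$ and passing to the limit in $\mathbb{E}\Gamma_1(\mu^N)$ --- the same argument in substance.
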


\begin{proof}
The first claim follows from the tightness of $\{\mathcal{L}(\mu^N)\}$ in $\bfP(\bfP(C([0,T];\bbR^d)))$ by the Prokhorov's theorem.

For the second, we first note that a sequence $\nu^n\in \bfP(C([0,T];\bbR^d))$ converges weakly to
$\nu\in \bfP(C([0,T];\bbR^d))$ will imply that $(\nu^n)_t\in \bfP(\bbR^d)$ converges weakly to $\nu_t\in \bfP(\bbR^d)$. In fact, for any function $\phi\in C_b(\bbR^d)$, we have
$\int_{\bbR^d}\phi(x)d(\nu_n)_t=\int_{C([0, T];\bbR^d)}\phi(X_t) d\nu_n$. Note that $X\to \phi(X_t)$ is a continuous functional on $C([0, T]; \bbR^d)$ and thus
\[
\lim_{n\to\infty}\int_{\bbR^d}\phi(x)d(\nu_n)_t=\int_{C([0, T];\bbR^d)}\phi(X_t) d\nu
=\int_{\bbR^d}\phi(x)d\nu_t.
\]
Now, consider a continuous functional $\Gamma: \bfP(\bbR^d)\to \bbR$. We define
$\Gamma_1: \bfP(C([0, T]; \bbR^d))\to \bbR$ as
\[
\Gamma_1(\nu):=\Gamma(\nu_t).
\]
According to what has been justified, $\Gamma_1$ is a continuous functional on $\bfP(C([0, T]; \bbR^d))$. Consequently,
\[
\mathbb{E}\Gamma_1(\mu^N)\to \mathbb{E}\Gamma_1(\mu) \Rightarrow
\mathbb{E}\Gamma(\mu_t^N)\to \mathbb{E}\Gamma(\mu_t).
\]
This then verifies the second claim.
\end{proof}

The following lemma gives another property which will be useful to us.
\begin{lemma}\label{convinlaw}
Let $\mathcal{X}$ be a Polish space. Suppose $\mu^N,\mu$ are random measures on $\mathcal{X}$ (i.e., $\bfP(\mathcal{X})$-valued random variables), such that $\mu^N$ converge to $\mu$ in law. For any $\psi\in C_b(\mathcal{X}\times\mathcal{X})$, if we define a functional $\calk_\psi:\bfP(\mathcal{X})\to\bbR$ with
\[
\calk_\psi(\nu)=\int_{\mathcal{X}^2}\psi(X,Y)\nu(dX)\nu(dY),
\]
 then $\calk_\psi(\mu^N)\to\calk_\psi(\mu)$ in law as $N\to\infty$.
\end{lemma}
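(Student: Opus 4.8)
The plan is to reduce the statement to the single deterministic fact that $\calk_\psi$ is a \emph{bounded continuous} functional on $\bfP(\mathcal{X})$, after which the probabilistic conclusion is automatic. Indeed, suppose we have shown $\calk_\psi \in C_b(\bfP(\mathcal{X}))$. Then for an arbitrary test function $\varphi \in C_b(\bbR)$ the composition $\varphi \circ \calk_\psi$ again lies in $C_b(\bfP(\mathcal{X}))$. Since $\mu^N \to \mu$ in law in the Polish space $\bfP(\mathcal{X})$ means precisely that $\bbE[\Phi(\mu^N)] \to \bbE[\Phi(\mu)]$ for every $\Phi \in C_b(\bfP(\mathcal{X}))$, choosing $\Phi = \varphi \circ \calk_\psi$ yields $\bbE[\varphi(\calk_\psi(\mu^N))] \to \bbE[\varphi(\calk_\psi(\mu))]$, which is exactly the assertion that the real random variables $\calk_\psi(\mu^N)$ converge in law to $\calk_\psi(\mu)$.

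It remains to establish that $\calk_\psi$ is bounded and continuous. Boundedness is immediate, since $|\calk_\psi(\nu)| \le \|\psi\|_{\infty}$ for all $\nu \in \bfP(\mathcal{X})$. For continuity I would factor $\calk_\psi$ as $\calk_\psi = L_\psi \circ P$, where $P \colon \bfP(\mathcal{X}) \to \bfP(\mathcal{X}^2)$ is the product map $P(\nu) = \nu \otimes \nu$ and $L_\psi \colon \bfP(\mathcal{X}^2) \to \bbR$ is the integration functional $L_\psi(\lambda) = \int_{\mathcal{X}^2} \psi \, d\lambda$. The functional $L_\psi$ is continuous on $\bfP(\mathcal{X}^2)$ by the very definition of the weak topology, because $\psi \in C_b(\mathcal{X}^2)$. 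Hence everything comes down to the continuity of $P$, i.e.\ the claim that $\nu_n \to \nu$ weakly in $\bfP(\mathcal{X})$ implies $\nu_n \otimes \nu_n \to \nu \otimes \nu$ weakly in $\bfP(\mathcal{X}^2)$.

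The continuity of $P$ is the step I expect to be the main obstacle, and I would handle it by a tightness argument. A weakly convergent sequence $\nu_n \to \nu$ is tight by Prokhorov's theorem, so $\{\nu_n \otimes \nu_n\}$ is tight in $\bfP(\mathcal{X}^2)$ (a product of compact sets is compact), hence relatively compact; it then suffices to identify the limit of every convergent subsequence as $\nu \otimes \nu$. On product-form test functions $\psi(x,y) = f(x)g(y)$ with $f,g \in C_b(\mathcal{X})$ this is trivial, since $\int f(x)g(y)\,d(\nu_n \otimes \nu_n) = \big(\int f\,d\nu_n\big)\big(\int g\,d\nu_n\big)$ converges to the corresponding product for $\nu$. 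The subtlety is upgrading to a general $\psi \in C_b(\mathcal{X}^2)$ on a non-compact Polish space, where Stone--Weierstrass is not directly available and the naive splitting $\int \psi\,d(\nu_n\otimes\nu_n) - \int\psi\,d(\nu\otimes\nu)$ fails because the inner integral $y \mapsto \int \psi(x,y)\,\nu_n(dx)$ converges only pointwise while being integrated against the moving measure $\nu_n$. Tightness is exactly what restores the missing uniformity, by confining the analysis to a compact set carrying all but $\e$ of the mass of every $\nu_n$. Since this product-continuity is entirely standard, one could alternatively just cite it (e.g.\ from Billingsley) and keep the argument brief.
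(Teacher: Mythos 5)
Your proposal is correct and follows essentially the same route as the paper: both reduce the claim to showing $\calk_\psi$ is a bounded continuous functional on $\bfP(\mathcal{X})$ and then compose with an arbitrary $\phi\in C_b(\bbR)$ to convert deterministic continuity into convergence in law. The only difference is that where the paper simply cites Billingsley (Theorem 2.8, p.~23) for the fact that $\nu_n\rightharpoonup\nu$ implies $\nu_n\otimes\nu_n\rightharpoonup\nu\otimes\nu$, you prove this standard fact yourself (correctly) via Prokhorov tightness and identification of subsequential limits on product test functions $f\otimes g$.
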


\begin{proof}
We consider the metric on $\bfP(\mathcal{X})$ induced by weak convergence. By p.23, Theorem 2.8 in \cite{billingsley2013}, $\nu^N\rightharpoonup\nu$ in $\bfP(\mathcal{X})$ implies that $\calk_\psi(\nu^N)\to\calk_\psi(\nu)$, therefore for any $\phi\in C_b(\bbR)$, $\phi\circ\calk_\psi$ is a bounded continuous functional on $\bfP(\mathcal{X})$, then $$    \bbE[\phi(\calk_\psi(\mu^N))]\to\bbE[\phi(\calk_\psi(\mu))], \ N\to\infty,$$
which gives the last claim.
\end{proof}

We note the following facts regarding the marginal distributions (see \cite[Lemma 5.6]{hauray2014} \cite[Theorem 4.1]{fournier2014} and \cite[Lemma 3.2]{liuyang2016}). The results are modified for our purpose here and we sketch a quick proof for reference.
\begin{proposition}\label{tightII}
Under the assumption of Proposition \ref{tightI}, denote by $(f_t^N)_{t\ge0}$ the joint distribution of $\{(X_t^{i,N})_{t\ge0}\}_{i=1}^N$ and $f_t^{(j),N}$ be the $j$th marginal of $f_t^N$ for any $j\ge1$. Then

(i) For any $j$ that is a positive integer, we have
\begin{multline}\label{eq:uniformmarginal}
\sup_{t\in[0,T],N} \int_{\bbR^{dj}}|x|^2f_t^{(j),N}(dx)<\infty,
~\sup_{t\in[0,T],N} H_j(f_t^{(j),N})<\infty,~
\sup_N\int_0^T I_j(f_t^{(j),N})dt<\infty.
\end{multline}

(ii) $f_t^{(j),N}$ has a density $\rho_t^{(j),N}$. Consider $\rho^{(j), N}=(\rho_t^{(j), N})\in L^1([0, T]\times \mathbb{R}^{dj})$. It has a subsequence $\rho^{(j),N}$ (without relabeling) weakly converging to $\rho^{(j)}$ in $L^1([0, T]\times \bbR^{dj})$ as $N\to\infty$, and also for a.e. $t\in [0, T]$, $f_j^{(j),N}=\rho_t^{(j),N}\,dx$ converges weakly to $\rho_t^{(j)}\,dx$ as probability measures. Besides,
\begin{gather}\label{eq:limitmarginal}
\sup_{t\in [0, T]}\int_{\bbR^{dj}}|x|^2\rho_t^{(j)}dx<\infty,~~
\sup_{t\in [0, T]}\int_{\bbR^{dj}}\rho_t^{(j)}|\log \rho_t^{(j)}|\,dx <\infty,~~
\int_0^t I_j(\rho_t^{(j)})ds<\infty.
\end{gather}
Moreover, let $\mu$ be the limit (random) measure of any further subsequence of $\mu^N$, and let $\mu_t$ be the time marginal of $\mu$.  Then, for a.e. $t\in [0, T]$, it holds that
\[
\int_{\bbR^{dj}}\rho_t^{(j)}\varphi \,dx=\mathbb{E}(\langle \mu_t^{\otimes j}, \varphi\rangle ),
~~\forall \varphi\in C_b(\bbR^{dj}).
\]

(iii) The entropy and Fisher information of the limit random measure $\mu$ satisfy that for all $t\in [0, T]$:
\begin{gather}
\begin{split}
\mathbb{E}(H_1(\mu_t))=\sup_{j\ge 1}H_j(\rho_t^{(j)})\le \liminf_{N\to\infty} H_N(f_t^N),\\
\mathbb{E}(I_1(\mu_t))=\sup_{j\ge 1}I_j(\rho_t^{(j)})\le \liminf_{N\to\infty} I_N(f_t^N).
\end{split}
\end{gather}

(iv) We have the following estimates for the Fisher information
\begin{equation}\label{fisher:est2}
    \bbE\int_0^T I_1(\mu_t)dt<C.
\end{equation}
Consequently, for a.s. $\omega$, $\mu(\omega)$ has a density $(\rho_t(\omega))_{t\in [0, T]}$. At time $t=0$, $\rho_t(\omega)=\rho_0$ for a.s. $\omega$.

\end{proposition}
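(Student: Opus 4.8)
Proposition \ref{tightII} — my plan would break into the four claimed items, most of which are soft-analysis bookkeeping, with the real work concentrated in (i).

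\textbf{Plan for (i).} The strategy is to push the uniform $N$-particle bounds from Proposition \ref{u-estimateII} down to the $j$-marginals using the super-additivity and sub-additivity machinery assembled earlier. For the second-moment bound, exchangeability gives $\int |x|^2 f_t^{(j),N}(dx) = j\,\mathbb{E}[|X_t^{1}|^2]$, which is uniformly bounded by \eqref{moment:est2} (so the $\sup$ over $t\in[0,T]$ costs a factor $T$ in the linear-in-$t$ term). For the Fisher information, I would invoke \eqref{fishersym}, i.e.\ $I_j(f_t^{(j),N})\le I_N(f_t^N)$, and then integrate in time and use the entropy–Fisher estimate \eqref{entropy:est2} to get $\int_0^T I_j(f_t^{(j),N})\,dt \le \int_0^T I_N(f_t^N)\,dt \le H_1(\rho_0)-H_N(f_t^N)$; the right-hand side is bounded once we control $H_N$ from below, which is where the moment bound re-enters. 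For the entropy itself I would use the lower bound \eqref{lbentropy} in terms of $M_p$ together with the marginal-entropy control displayed just after Lemma \ref{entropy2}, namely $H_j(f^{(j)})\le \frac{k}{qj}H_k(f)+\frac{r}{qj}(\lambda M_p(f^{(r)})+C_{p,\lambda})$ with $k=N$; the uniform moment bound keeps $M_p$ finite and the entropy estimate \eqref{entropy:est2} keeps $H_N(f_t^N)$ bounded above, giving the uniform upper bound on $H_j(f_t^{(j),N})$.

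\textbf{Plan for (ii) and (iii).} From the uniform entropy and moment bounds in (i), the same Dunford–Pettis / diagonal-extraction argument used in the proof of Proposition \ref{u-estimateII} yields weak $L^1$ compactness of $\rho^{(j),N}$ on $[0,T]\times\mathbb{R}^{dj}$, and I would extract a further subsequence converging weakly to some $\rho^{(j)}$; the uniform integrability of $\rho\log\rho$ (from \eqref{lbentropy} plus the moment bound) upgrades this to weak convergence of the time-slices for a.e.\ $t$. The limit bounds \eqref{eq:limitmarginal} then follow from lower semicontinuity of $H$, $I$, and the second moment under weak convergence (the same semicontinuity facts cited in Proposition \ref{u-estimateII}). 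To identify $\rho_t^{(j)}$ with the $j$-th moment of $\mu_t$, I would use Lemma \ref{convinlaw} (or its $j$-fold analogue) applied to $\psi\in C_b(\mathbb{R}^{dj})$: writing $\langle (f_t^N)^{(j)},\varphi\rangle = \mathbb{E}\langle (\mu_t^N)^{\otimes j},\varphi\rangle + o(1)$ by exchangeability (the diagonal terms are $O(j^2/N)$ and vanish), then passing to the limit via convergence in law of $\mu_t^N$ to $\mu_t$ from Lemma \ref{lmm:convergesinlaw}. Item (iii) is then the statement that these marginal entropies/Fisher informations increase in $j$ to $\mathbb{E}(H_1(\mu_t))$ and $\mathbb{E}(I_1(\mu_t))$; the monotonicity $\sup_j H_j(\rho_t^{(j)})$ and the $\liminf$ bound follow from \eqref{eq:entropyrelations} and \eqref{fishersym} combined with lower semicontinuity, while the identification of the supremum with the expected normalized quantity of $\mu_t$ is essentially the de Finetti / Hewitt–Savage representation of the exchangeable limit.

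\textbf{Plan for (iv) and the main obstacle.} Estimate \eqref{fisher:est2} follows by integrating the $j=1$ case of (iii) in time against the uniform bound $\int_0^T I_N(f_t^N)\,dt\le H_1(\rho_0)$ from \eqref{entropy:est2}, then sending $N\to\infty$ through $\liminf$ and Fatou; finiteness of $\mathbb{E}\int_0^T I_1(\mu_t)\,dt$ immediately forces $I_1(\mu_t)<\infty$ for a.e.\ $t$ and a.s.\ $\omega$, so $\mu_t(\omega)$ has a density $\rho_t(\omega)$. The initial condition $\rho_0(\omega)=\rho_0$ a.s.\ comes from the hypothesis that the initial empirical measure converges weakly to $\rho_0$, combined with the law-of-large-numbers structure (the $X_0^{i,N}$ are i.i.d.\ $\sim\rho_0$), upgraded to a.s.\ identification of the limit at $t=0$. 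I expect the genuine difficulty to be in (iii), specifically the identification $\mathbb{E}(I_1(\mu_t))=\sup_{j}I_j(\rho_t^{(j)})$: the inequality $\le$ direction is lower semicontinuity, but the reverse requires the de Finetti representation for the exchangeable limit measure together with the correct additivity of Fisher information under the chaotic decomposition, and one must be careful that the supremum over $j$ is attained in the limit rather than merely bounded. The interchange of $\sup_j$, the $N\to\infty$ limit, and the time integral is the step most prone to subtlety.
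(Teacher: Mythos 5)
Your outline for (i), (iii) and (iv) matches the paper's proof essentially step for step: the same use of \eqref{fishersym} with \eqref{entropy:est2} for the Fisher bound, the same lower bound on $H_N$ via Lemma \ref{entropy2} and the second moments, the same marginal-entropy inequality (the paper's \eqref{eq:aux1}) for the entropy bound, and for (iii) the paper likewise defers the hard direction of $\mathbb{E}(I_1(\mu_t))=\sup_j I_j(\nu_t^j)$ to the de Finetti/Hewitt--Savage-based results of Hauray--Mischler (\cite[Lemma 5.6, Theorems 5.4, 5.7]{hauray2014}), so flagging rather than proving that identification is consistent with what the paper does. Your (iv) (Fatou in time plus the law of large numbers at $t=0$) is also the paper's argument.

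There is, however, one genuine flaw in your plan for (ii): the claim that uniform integrability of $\rho\log\rho$ ``upgrades'' weak $L^1([0,T]\times\bbR^{dj})$ convergence of $\rho^{(j),N}$ to weak convergence of the time slices for a.e.\ $t$. That implication is false as stated: space-time weak convergence only controls time-averaged pairings, and a sequence oscillating rapidly in $t$ (with uniformly integrable, even uniformly smooth, slices) converges weakly on the product space to its time average while no slice converges; uniform integrability in $x$ does nothing to rule this out. The paper obtains slice convergence by an entirely different mechanism, whose ingredients you actually list later but do not wire together correctly: first, $\mu_t^N\to\mu_t$ in law for \emph{every} $t$ (Lemma \ref{lmm:convergesinlaw}, using continuity of the projection $\pi_t$), combined with the exchangeability computation \eqref{eq:auxmarginal}, shows $f_t^{(j),N}\rightharpoonup \nu_t^j$ for every $t$, where $\nu_t^j=\int g^{\otimes j}\,\mathcal{L}(\mu_t)(dg)$; second, testing the space-time weak limit against products $\phi(t)\varphi(x)$ with $\phi\in C[0,T]$ identifies $\rho_t^{(j)}$ as the density of $\nu_t^j$ for a.e.\ $t$ --- and this last step needs the separability of $C_c^\infty$ to get a single null set of times working for all $\varphi$, plus the uniform second-moment/tightness bound to pass from $C_c$ to $C_b$. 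Without this product-test-function bridge, your $\rho^{(j)}$ (the space-time weak limit) and the slice limits $\nu_t^j$ are a priori unrelated, and the displayed identity $\int\rho_t^{(j)}\varphi\,dx=\mathbb{E}\langle\mu_t^{\otimes j},\varphi\rangle$ is exactly the statement you would be missing. The repair stays within your own toolkit, but the order of quantifiers (a.e.\ $t$ versus all $\varphi$) is the step most prone to error, not the interchange of $\sup_j$ and limits you single out.
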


\begin{proof}

(i) The second moment estimate follows directly from equation \eqref{moment:est2}.
Equation \eqref{fishersym} and \eqref{entropy:est2} implies that
$\int_0^T I_j(f_t^{(j),N})\,dt \le \int_0^T I_N(f_t^{(j),N})\,dt
\le H_1(f_0)-H_N(f_t^N)$.
By the second moment estimates and Lemma \ref{entropy2} with $p=2,\lambda=1$, we see that
\begin{equation}\label{fisherIV.2}
    H_1(\rho_T^{(1),N})\ge-C_{p,\lambda}-M_2(\rho_T^{(1),N})\ge-C ,
\end{equation}
where $C$ depends only on $\rho_0,T$ and $d$. By Lemma \ref{entropy1}, we have $H_N(\rho_T^{N})\ge H_1(\rho_T^{(1),N}) \ge -C$. We thus have
\begin{gather}\label{eq:uniformfisher}
\sup_{N}\int_0^T I_j(f_t^{(j),N})\,dt \le \sup_N\int_0^T I_N(f_t^{N})\,dt\le C(j,T,d)<\infty.
\end{gather}

We note that $H_N(f_t^N)$ is uniformly bounded. Then, by \eqref{eq:entropyrelations}, we have
(note that entropy can be negative)
\begin{gather}\label{eq:aux1}
H_j(f_t^{(j),N})
 \le \left(1+\frac{N-mj}{mj}\right)H_N(f_t^N)-\frac{N-mj}{mj}H_{N-mj}\left(f_t^{(N-mj),N}\right),
\end{gather}
where $m$ is an integer chosen so that $N-mj\in [0, j)$. A simple application of \eqref{lbentropy} with second moment gives the uniform bound for $H_j(f_t^{(j),N})$.

(ii) By the uniform second moments estimate, $\rho^{(j), N}$ is tight in $L^1([0, T]\times \bbR^{dj})$. Moreover, we have $\int_{\bbR^{dj}} \rho^{(j),N}|\log \rho^{(j), N}|\,dx$ to be uniformly bounded by the uniform estimates of $H_j$ and a similar calculation for \eqref{eq:L1logL1}.
Hence, $\rho^{(j), N}$ is uniformly integrable on $[0, T]\times \bbR^{dj}$.  Though the Dunford-Pettis theorem is stated for finite measures, combined with the tightness, the uniform integrability implies that $\rho^{(j),N}$ is weakly compact in $L^1([0, T]\times \bbR^{dj})$. Hence, we can find a subsequence $\rho^{(j),N}$ converging weakly to $\rho^{(j)}$ in $L^1([0, T]\times \bbR^{dj})$.

The second moment mapping $\nu\mapsto \int_{\bbR^{dj}} |x|^2 \nu(dx)$ is lower-semicontinous with respect to the topology of weak convergence, which can be seen by approximating $|x|$ with $|x|\wedge m$. After taking $\sup$ in $t$, it is still lower semicontinuous. It has been proved in \cite[Lemma 4.2]{fournier2014} that $H_j$ and $I_j$ are lower semi-continuous. Taking supremum in $t$, or taking integral of nonnegative lower semicontinuous functionals still yield lower semicontinuous functionals. Hence, taking $N\to\infty$ in \eqref{eq:uniformmarginal}, we get the corresponding estimates for $\rho_t^{(j)}$. The second moment and entropy estimates then yield $\sup_{t\in [0, T]}\int_{\bbR^{dj}} \rho_t^{(j)}|\log \rho_t^{(j)}|\,dx<\infty$ similarly as we did in \eqref{eq:L1logL1}.

We now take $\phi\in C[0, T]$ and $\varphi\in C_b(\bbR^{dj})$. Then, $\Gamma: C([0, T];\bbR^d)^j\to \bbR$ defined by $(X^1,\ldots, X^j)\to \int_0^T\phi(t)\varphi(X^1_t,\ldots,X_t^j)\,dt$ is a bounded continuous functional. A slight generalization of Lemma \ref{convinlaw} with $\mathcal{X}=C([0, T];\bbR^d)$ shows that
\[
\mathbb{E}\langle (\mu^N)^{\otimes j}, \Gamma \rangle \to \mathbb{E}\langle \mu^{\otimes j}, \Gamma\rangle
=\int_0^T\phi(t)\mathbb{E}\langle \mu_t^{\otimes j}, \varphi\rangle\,dt,
\]
where the last term is obtained by Fubini and the definition of $\mu_t$.
Let 
\[
\nu_t :=\mathcal{L}(\mu_t)\in \bfP(\bfP(\bbR^d)).
\]
 Define
\[
\nu_t^j:=\int_{\bfP(\bbR^d)} g^{\otimes j} \nu_t(dg)\in \bfP(\bbR^{dj}).
\]
By this definition, we have for any $\varphi\in C_b(\bbR^{dj})$ that
\[
\langle \nu_t^j, \varphi\rangle=\int_{\bfP(\bbR^d)}\int_{\bbR^{dj}}\varphi(x)g^{\otimes j}(dx)\nu_t(dg)=\mathbb{E}(\langle \mu_t^{\otimes j}, \varphi\rangle).
\]
This means that $\mathbb{E}\langle (\mu^N)^{\otimes j}, \Gamma \rangle \to\int_0^T \phi(t)\langle \nu_t^j, \varphi\rangle\,dt$.

On the other hand, by definition and Fubini theorem,
\begin{multline}\label{eq:auxmarginal}
\mathbb{E}\langle (\mu^N)^{\otimes j}, \Gamma\rangle
=\int_0^T\phi(t)\mathbb{E}\left(\frac{1}{N^j}\sum\varphi(X_t^{i_1,N},\ldots, X_t^{i_j, N}) \right)dt=
\frac{N!}{N^j(N-j)!}\int_0^T\phi(t)\times\\
\mathbb{E}\varphi(X_t^{1,N},\ldots, X_t^{j,N})dt
+\frac{1}{N^j}\sum_{\text{some } i_{k}'s \text{ are equal}}\int_0^T\phi(t)\mathbb{E}\varphi(X_t^{i_1,N},\ldots, X_t^{i_j, N})dt.
\end{multline}
Simple estimate shows that the second term goes to zero as $N\to\infty$; while the first term converges to $\int_0^T\phi(t)\int_{\bbR^{dj}}\varphi \rho_t^{(j)}\,dx dt$ by the results we have just proved.

Since $\phi(t)$ is arbitrary, for a fixed $\varphi$, we have for a.e. $t$ that
\begin{gather}\label{eq:marginaldensity}
\int_{\bbR^{dj}}\varphi \rho_t^{(j)}\,dx =\langle \nu_t^j, \varphi\rangle.
\end{gather}
Moreover, since $C_c^{\infty}$ is separable, we know for a.e. $t$ and all $\varphi\in C_c$ that
\eqref{eq:marginaldensity} holds.
Using the uniform second moment bounds of $\rho_t^{(j)}$ and $\nu_t^j$ (the proof of second moment for $\nu_t^j$ can be obtained similarly as for $\rho_t^{(j)}$), we know that they are tight. We thus can pass from $C_c$ to $C_b$ for these $t$.  Hence, $\rho_t^{(j)}$ is in fact the density of $\nu_t^j$ for a.e. $t\in [0, T]$.

Now, a slight generalization of Lemma \ref{convinlaw} with $\mathcal{X}=\bbR^{dj}$ shows that for $\varphi\in C_b(\bbR^{dj})$ that
\[
\mathbb{E}\langle (\mu_t^N)^{\otimes j}, \varphi\rangle \to \mathbb{E}\langle (\mu_t)^{\otimes j}, \varphi\rangle=\langle \nu_t^j, \varphi\rangle,
\]
since $\mu_t^N$ converges in law to $\mu_t$ by Lemma \ref{lmm:convergesinlaw}.
Together with a similar computation of \eqref{eq:auxmarginal} shows that we in fact have
\[
\lim_{N\to\infty}\int_{\bbR^{dj}}\varphi f_t^{(j),N}(dx)=\langle \nu_t^j, \varphi\rangle.
\]
This in fact means $f_t^{(j),N}=\rho_t^{(j),N}\,dx\rightharpoonup \nu_t^j$ for all $t$. Thus, for a.e. $t$, $\rho_t^{(j),N}\,dx\rightharpoonup \rho_t^{(j)}\,dx$ as probability measures.

(iii) In \cite[Lemma 4.2]{fournier2014}, it is proved that the functional $I_j$ is convex, proper and lower semi-continuous. Then, \cite[Lemma 5.6]{hauray2014} showed that
\[
\bbE(I_1(\mu_t))=\int_{\bfP(\bbR^d)}I_1(g)\nu_t(dg)=\sup_{j\ge 1} I_j(\nu_t^j).
\]
On one side, the convexity gives
\[
\int_{\bfP(\bbR^d)}I_1(g)\nu_t(dg)
=\int_{\bfP(\bbR^d)}\frac{1}{j}I(g^{\otimes j})\nu_t(dg)
\ge \frac{1}{j}I\left(\int_{\bfP(\bbR^d)}g^{\otimes j}\nu_t(dg)\right)=I_j(\nu_t^j).
\]
 The other side is more tricky. One uses a type of affine property for the functional $\nu\mapsto \sup_{j\ge 1} I_j(\nu)$, and we refer the readers to \cite[Theorems 5.4, 5.7]{hauray2014}.

 Then, using \eqref{eq:entropyrelations}, it is clear that $\lim_{N\to\infty}I_j(\rho^{(j),N})
 \le \lim_{N\to\infty}I_N(f_t^N)$. The lower semicontinuity then implies $I_j(\nu_t^j)
 \le \lim_{N\to\infty}I_j(\rho^{(j),N})$.

 For the entropy, it is shown in \cite[Lemma 4.2]{fournier2014} that $H_j$ is convex, lower semi-continuous and a certain affine property. $\bbE(H_1(\mu_t))=\int_{\bfP(\bbR^d)}H_1(g)\nu_t(dg)=\sup_{j\ge 1} H_j(\nu_t^j)$ holds.

 Since the entropy could be negative, we should use the fact that the second moment of $\rho^{(j),N}$ is uniformly bounded and \eqref{lbentropy}. We apply \eqref{lbentropy} in \eqref{eq:aux1} and have $H_j(\rho^{(j),N})
 \le (1+\frac{N-mj}{mj})H_N(f_t^N)+\frac{N-mj}{mj}\left(M_2(\rho^{(N-mj),N})+C_{j,2}\right)$. It then follows that $\lim_{N\to\infty}H_j(\rho^{(j),N})
 \le \lim_{N\to\infty}H_N(f_t^N)$ still holds. The lower semicontinuity then gives the desired result.

 (iv)  By \eqref{eq:uniformfisher}, we obtain \eqref{fisher:est2}.
Now, since $\int_0^TI_1(\mu_s)\,ds<\infty$ a.s.,  the definition of Fisher information (Equation \eqref{eq:entropy}) implies that for such $\omega$, $(\mu_s)_{s\in [0, T]}$ has density for a.e. $s\in [0, T]$. The claim for $t=0$ is a simple consequence of law of large numbers.

\end{proof}

\section{The limit measure almost surely is a weak solution}\label{sec:limitmeasure}

Now we define the weak solution of \eqref{eq:pnp} in the following sense:
\begin{definition}\label{weaksol:def}
We say $\rho\in L^{\infty}(0, T; L^1(\bbR^d))$ is a weak solution to \eqref{eq:pnp} if
\begin{itemize}
\item $\rho\,dx \in C([0, T]; C_b(\bbR^d)')$ and $\rho\nabla h\in L^1(0, T; L^1(\bbR^d))$, where $h=g*\rho$.

\item For all $t\in [0, T]$:
\begin{equation}\label{weaksol}
    \langle\rho_t,\phi\rangle-\langle \rho_0,\phi\rangle-\int_0^t\int_{\bbR^{d}}\nabla\phi(x)\cdot \nabla h(x)\rho_s(x)\,dx ds-\int_0^t\langle\rho_s,\Delta\phi\rangle ds=0
\end{equation}
for any $\phi\in C_c^2(\bbR^d)$.
\end{itemize}
\end{definition}

We first of all prove the following important result for $d=3$.
\begin{proposition}\label{asweak1}
Let $d=3$ and $\{(X_t^{i,N})\}_{i=1}^N$ be the unique solution to \eqref{model} with the i.i.d initial data $\{X_0^{i,N}\}_{i=1}^N$. Suppose the common density $\rho_0$ satisfies $H(\rho_0)<\infty$, $m_2(\rho_0)<\infty$ and $\mathcal{E}(\rho_0)<\infty$. Assume the random measure $\mu$ on $C([0,T];\bbR^3)$ is a limit point of $\mu^N$ under the topology induced by convergence in law. Then, $\mu$ has a density $(\rho_s)_{s\in [0, T]}$ a.s. as we have seen, and for fixed $\phi\in C_b^2(\bbR^3)$ and $t\in [0,T]$, $\rho$ satisfies the following integral equation almost surely.
\begin{multline}\label{weaksol1}
    \langle\rho_t,\phi\rangle-\langle \rho_0,\phi\rangle-\frac{1}{2}\int_0^t\int_{\bbR^{3}\times \bbR^3}(\nabla\phi(x)-\nabla\phi(y))\\
    \cdot F(x-y)\rho_s(x)\rho_s(y)\,dxdy\,ds-\int_0^t\langle\rho_s,\Delta\phi\rangle ds=0.
\end{multline}
\end{proposition}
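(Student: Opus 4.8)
The plan is to derive the identity \eqref{weaksol1} at the level of the $N$-particle system via It\^o's formula, to symmetrize the interaction so as to tame the singularity of $F$, and then to pass to the limit using the convergence in law of $\mu^N$ together with the uniform Fisher information bounds of Proposition \ref{tightII}. First I would apply It\^o's formula to $\phi(X_t^{i,N})$, sum over $i$, and divide by $N$. Using $-\frac{1}{N}\nabla_{x_i}\cH=\frac{1}{N}\sum_{j\neq i}F(X^{i,N}-X^{j,N})$ and the diffusion coefficient $\sqrt{2}$, this gives
\[
\langle\mu_t^N,\phi\rangle-\langle\mu_0^N,\phi\rangle
=\frac{1}{N^2}\sum_{i\neq j}\int_0^t\nabla\phi(X_s^{i,N})\cdot F(X_s^{i,N}-X_s^{j,N})\,ds
+M_t^N+\int_0^t\langle\mu_s^N,\Delta\phi\rangle\,ds,
\]
where $M_t^N=\frac{\sqrt{2}}{N}\sum_i\int_0^t\nabla\phi(X_s^{i,N})\cdot dB_s^i$ satisfies $\bbE|M_t^N|^2\le 2t\|\nabla\phi\|_\infty^2/N\to0$. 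Relabelling $i\leftrightarrow j$ in the drift and using $F(-u)=-F(u)$, the interaction term becomes $\frac{1}{N^2}\sum_{i\neq j}\int_0^t K(X_s^{i,N},X_s^{j,N})\,ds$ with the symmetrized kernel $K(x,y):=\frac12(\nabla\phi(x)-\nabla\phi(y))\cdot F(x-y)$. A first-order Taylor bound $|\nabla\phi(x)-\nabla\phi(y)|\le\|\nabla^2\phi\|_\infty|x-y|$ then yields $|K(x,y)|\le C|x-y|^{-(d-2)}$, i.e.\ the symmetrization lowers the singularity order from $d-1$ (that of $F$) to $d-2=1$ in $d=3$.

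Next I would split $K=K_\epsilon+(K-K_\epsilon)$, where $K_\epsilon(x,y)=K(x,y)\zeta((x-y)/\epsilon)$ for a smooth radial $\zeta$ vanishing on $\{|u|\le1\}$ and equal to $1$ on $\{|u|\ge2\}$, so that $K_\epsilon\in C_b(\bbR^3\times\bbR^3)$ vanishes near the diagonal. Since $K_\epsilon(x,x)=0$, the off-diagonal sum equals $\iint K_\epsilon\,d\mu_s^N d\mu_s^N$, and by Lemma \ref{substitution} and Lemma \ref{convinlaw} with $\mathcal{X}=C([0,T];\bbR^d)$ the functional $\nu\mapsto\int_0^t\iint K_\epsilon\,d\nu_s d\nu_s\,ds$ is bounded and continuous, hence passes to the limit under convergence in law. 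For the remainder, supported on $\{|x-y|<2\epsilon\}$ and bounded by $C|x-y|^{-(d-2)}$, I would invoke the Fisher information estimate \eqref{fisher:exp2} with $\gamma=d-2=1$ and some $\beta\in(1/3,2/3]$: by exchangeability and H\"older in time,
\[
\bbE\Big|\frac{1}{N^2}\sum_{i\neq j}\int_0^t(K-K_\epsilon)(X_s^{i,N},X_s^{j,N})\,ds\Big|
\le C\int_0^t\epsilon^{d\beta-(d-2)}I(f_s^{(2),N})^{\frac{d\beta}{2}}\,ds
\le C\epsilon^{d\beta-(d-2)},
\]
uniformly in $N$ by the uniform bound $\sup_N\int_0^T I_2(f_s^{(2),N})\,ds<\infty$ of Proposition \ref{tightII}. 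The same estimate, applied to $\rho_s\otimes\rho_s$ and using $I_1(\mu_s)<\infty$ for a.e.\ $s$ almost surely (Proposition \ref{tightII}(iv)), controls the corresponding remainder for $\mu$. This is precisely where $\gamma<2$ is needed: at $\gamma=2$ the constraint $\beta\le 2/d$ forces $d\beta-\gamma\le0$ and the cutoff error would not vanish, which is why the symmetrization is indispensable.

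Finally I would close the argument. Let $R^N$ and $R$ denote the residuals of \eqref{weaksol1} for $\mu^N$ (with the diagonal-free sum, needed because $K$ is undefined on the diagonal) and for $\mu$, and let $R^N_\epsilon$, $R_\epsilon$ be the corresponding residuals with $K$ replaced by $K_\epsilon$. It\^o's formula gives $R^N=M_t^N\to0$ in $L^2$, the cutoff residual converges in law $R^N_\epsilon\to R_\epsilon$, and the two remainder estimates give $\bbE|R^N_\epsilon-R^N|\le C\epsilon^{d\beta-(d-2)}$ and $\bbE|R_\epsilon-R|\le C\epsilon^{d\beta-(d-2)}$. Taking a bounded Lipschitz $\Psi$ with $\Psi(0)=0$ and $\Psi(x)>0$ for $x\neq0$, the triangle inequality yields $|\bbE\Psi(R_\epsilon)|\le C\epsilon^{d\beta-(d-2)}$ after letting $N\to\infty$; then $\epsilon\to0$ gives $\bbE\Psi(R)=0$, hence $R=0$ almost surely, which is \eqref{weaksol1}. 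The main obstacle is the passage to the limit in the singular interaction term, and everything hinges on the symmetrized kernel having singularity strictly below order $2$ so that \eqref{fisher:exp2} makes the cutoff error uniformly small in $N$; the remaining technical points (a.e.-in-time finiteness of the Fisher information and integrability of the limiting drift) are all furnished by Proposition \ref{tightII}.
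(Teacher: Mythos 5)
Your proposal is correct and follows essentially the same route as the paper's proof: It\^o's formula plus symmetrization to reduce the singularity to $|x-y|^{-1}$, a regularization of the kernel near the diagonal whose error is controlled uniformly in $N$ (and for the limit measure $\mu$) by Lemma \ref{fisherIII} with $\gamma=1$ together with the Fisher information bounds of Proposition \ref{tightII}, passage to the limit of the regular part via Lemma \ref{convinlaw}, and finally $N\to\infty$ before $\e\to0$. The only cosmetic differences are your multiplicative cutoff $K_\e=K\,\zeta((x-y)/\e)$ in place of the paper's mollified force $F_\e=-\nabla(J_\e*g)$, and your bounded Lipschitz test function $\Psi$ in place of the paper's truncated absolute value $|x|\wedge\|\psi_\e\|_{L^\infty}$; the decomposition, key lemmas, and estimates are otherwise identical.
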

\begin{proof}
We divide our proof into the following steps. 

\textbf{Step 1} The integral \eqref{weaksol1} involves the singularity, therefore we need to show that it is well-defined. Since $\phi\in C_b^2(\bbR^3)$, we only need to show that the third term is integrable for a.s. $\oino$. By the Tonelli's theorem, it suffices to show that
\begin{equation}\label{step1.1}
    \bbE\left[\frac{1}{2}\int_0^t\int_{\bbR^{2d}}|(\nabla\phi(x)-\nabla\phi(y))\cdot F(x-y)|\mu_s(dx)\mu_s(dy)ds\right]<\infty
\end{equation}
Since $\phi\in C_b^2(\bbR^3)$, by Lemma \ref{fisherIII}, we take $d=3$ and $\frac{1}{3}<\beta\le\frac{2}{3}$, there exists a constant $C$ depending only on $\phi,T$ and $\beta$ such that
\begin{equation}\label{step1.2}
    \begin{split}
        &\quad \bbE\left[\frac{1}{2}\int_0^t\int_{\bbR^{3}\times \bbR^3}|(\nabla\phi(x)-\nabla\phi(y))\cdot F(x-y)|\mu_s(dx)\mu_s(dy)ds \right]\\
        &\le2\|\nabla^2\phi\|_\infty\bbE \left[ \Big|\int_0^t\int_{\bbR^{3}\times\bbR^3}\frac{\rho_s(x)\rho_s(y)}{|x-y|}dxdyds \Big| \right]\\
        &\le C\bbE \left[\int_0^T (I_2(\rho_s^{\otimes 2})^{\frac{3\beta }{2}}+1)ds \right]
        =C\bbE \left[\int_0^T I_1(\rho_s)^{\frac{3\beta }{2}}ds \right]+CT.
    \end{split}
\end{equation}
Since $\beta<\frac{2}{3}$, using the H\"{o}lder's inequality, there exists a constant $C=C(\phi,T,\beta)$ such that
\begin{equation}\label{step1.3}
    \bbE\left[\int_0^T I_1(\rho_s)^{\frac{3\beta }{2}}ds\right]\le C \left(\int_0^T I(\rho_s)ds \right)^{\frac{3\beta}{2}}.
\end{equation}
Combining \eqref{fisher:est2}, \eqref{step1.2} and \eqref{step1.3} together we obtain \eqref{step1.1}, which means that the integral \eqref{weaksol1} is well-defined.

Now from Lemma \ref{substitution} and \eqref{substitutionII}, we can rewrite the integral \eqref{weaksol1} as
\begin{equation}\label{step1.4}
\begin{split}
&\langle\mu_t,\phi\rangle-\langle \rho_0,\phi\rangle-\frac{1}{2}\int_0^t\int_{\bbR^{3}\times\bbR^3}(\nabla\phi(x)-\nabla\phi(y))\cdot F(x-y)\mu_s(dx)\mu_s(dy)ds-\int_0^t\langle\mu_s,\Delta\phi\rangle ds\\
    =&-\frac{1}{2}\int_0^t\iint_{(C([0, T]; \bbR^3))^2}(\nabla\phi(X_s)-\nabla\phi(Y_s))\cdot F(X_s-Y_s)\mu(dX)\mu(dY)ds\\
    &+\int_{C([0, T]; \bbR^3)}(\phi(X_t)-\phi(X_0))\mu(dX)-\int_{C([0, T]; \bbR^3)}\int_0^t\Delta\phi(X_s)ds\mu(dX).
\end{split}
\end{equation}
For $X, Y\in C([0,T];\bbR^3)$, we define the functional
\begin{equation}\label{step1.5}
    \psi(X,Y)=\phi(X_t)-\phi(X_0)-\frac{1}{2}\int_0^t(\nabla\phi(X_s)-\nabla\phi(Y_s))\cdot F(X_s-Y_s)ds-\int_0^t\Delta\phi(X_s)ds
\end{equation}
and similarly $\psi_{\e}(X, Y)$ is the functional with $F$ being replaced by $F_{\e}$.
We also define functional $\calk_\psi$ and $\calk_{\psi_\epsilon}$ on $\mathbf{P}(C([0,T];\bbR^d))$ by
\begin{equation}\label{step1.7}
\begin{split}
    \calk_{\psi}(\nu)&=\int_{(C([0, T];\bbR^3))^2}\psi(X,Y)\nu(dX)\nu(dY),\\
    \calk_{\psi_{\epsilon}}(\nu)&=\int_{(C([0, T];\bbR^3))^2}\psi_{\epsilon}(X,Y)\nu(dX)\nu(dY).
\end{split}
\end{equation}
If $\nu$ is a random measure, i.e. a (measurable) mapping from $(\Omega, \mathcal{F}, \bbP)$ to $\mathbf{P}(C([0,T];\bbR^3))$, then $\calk_{\psi}(\nu)$ is a random variable on $(\Omega, \mathcal{F}, \bbP)$. Since we can change the order of integration in light of \eqref{step1.1}, from the definition \eqref{step1.7} we see that for a.s. $\oino$, the left side of \eqref{weaksol1} is actually equal to $\calk_\psi(\mu(\omega))$. Therefore it suffices to show that $\bbE[|\calk_\psi(\mu)|]=0$.

For $\e>0$, one certainly has
\begin{equation}\label{step1.9}
    \bbE[|\calk_\psi(\mu)|]\le\bbE[|\calk_\psi(\mu)-\calk_{\psi_\epsilon}(\mu)|]+\bbE[|\calk_{\psi_\epsilon}(\mu)|].
\end{equation}
In the following steps we show that each term of \eqref{step1.9} goes to 0 as $\e\to0$.

\textbf{Step 2} Now we investigate the first term of \eqref{step1.9}. For fixed $\omega\in\Omega$, $\mu$ is a probability measure on $C([0,T],\bbR^3)$ thus we can apply Lemma \ref{substitution} and obtain
\begin{equation}\label{step2.1}
\begin{split}
    &\bbE[|\calk_\psi(\mu)-\calk_{\psi_\epsilon}(\mu)|]\\
    &=\frac{1}{2}\bbE\left[\Big|\int_0^t\int_{C\times C}[\nabla\phi(X_s)-\nabla\phi(Y_s)]\cdot[F_\epsilon(X_s-Y_s)-F(X_s-Y_s)]\mu(dX)\mu(dY)ds\Big|\right]\\
    &=\frac{1}{2}\bbE\left[ \Big|\int_0^t\iint_{\bbR^3\times\bbR^3}[\nabla \phi(x)-\nabla\phi(y)]\cdot[F_\epsilon(x-y)-F(x-y)]\mu_s(dx)\mu_s(dy)ds \Big| \right]\\
    &=\frac{1}{2}\bbE\left[ \Big|\int_0^t\iint_{|x-y|<\epsilon}[\nabla\phi(x)-\nabla\phi(y)]\cdot[F_\epsilon(x-y)-F(x-y)]\rho_s(x)\rho_s(y)dxdyds \Big| \right]\\
    &\le C_d\bbE\|\nabla^2\phi\|_\infty\int_0^t\iint_{|x-y|<\epsilon}\frac{\rho_s(x)\rho_s(y)}{|x-y|}dxdyds.
    \end{split}
\end{equation}
In the equation we used the fact that $|F_\epsilon(x)|\le|F(x)|$ and $F_\epsilon(x)=F(x)$ when $|x|\ge\epsilon$. Now, we apply \eqref{fisher:exp2} in Lemma \ref{fisherIII} by taking $\gamma=1$ and obtain
\begin{equation}\label{step2.2}
\begin{split}
\bbE\int_0^t\iint_{|x-y|<\epsilon}\frac{\rho_s(x)\rho_s(y)}{|x-y|}dxdyds&\le C_\beta\epsilon^{3\beta-1}\bbE\int_0^t I(\rho_s^{\otimes 2})^{\frac{3\beta}{2}}ds  \\
&\le C(\beta,T)\epsilon^{3\beta-1}\left(\bbE\int_0^t I(\rho_s)ds \right)^{\frac{3\beta}{2}},
\end{split}
\end{equation}
where $\frac{1}{3}<\beta<\frac{2}{3}$. Therefore by \eqref{fisher:est2}, there exists $C=C(\phi,T,\beta, \rho_0)$ such that
\begin{equation}\label{step2.3}
    \bbE[|\calk_\psi(\mu)-\calk_{\psi_\epsilon}(\mu)|]\le C \epsilon^{3\beta-1}.
\end{equation}

\textbf{Step 3} For the second term of \eqref{step1.9}, since $\psi_\epsilon$ is bounded and continuous on $C([0,T];\bbR^3)\times C([0,T];\bbR^3)$ and $\mu^N\to\mu$ in law, applying Lemma \ref{convinlaw} with $\mathcal{X}=C([0, T];\bbR^3)$, the random variable $\calk_{\psi_\epsilon}(\mu^N)$ converges to $\calk_{\psi_\epsilon}(\mu)$ in law for fixed $\epsilon$.
Since $\calk_{\psi_\epsilon}(\mu^N)$ and $\calk_{\psi_\epsilon}(\mu)$ are bounded by $\|\psi_{\epsilon}\|_{L^{\infty}}$, we can take $\phi(x)=|x|\wedge \|\psi_{\epsilon}\|_{L^{\infty}}$ as the test function and conclude
\begin{equation}\label{step3.1}
    \lim_{N\to\infty}\bbE[|\calk_{\psi_\epsilon}(\mu^N)|]=\bbE[|\calk_{\psi_\epsilon}(\mu)|].
\end{equation}

Now we investigate $\bbE[|\calk_{\psi_\epsilon}(\mu^N)|]$. By definition, it holds that
\begin{multline}\label{step3.2}
\calk_{\psi_\epsilon}(\mu^N)=\frac{1}{N^2}\sum_{i,j=1}^N\psi_\epsilon(X_t^{i,N},X_t^{j,N})
  =\frac{1}{N^2}\sum_{i,j=1}^N\Big[\phi(X_t^{i,N})-\phi(X_0^{i,N}) \\
  -\frac{1}{2}\int_0^t(\nabla\phi(X_s^{i,N})-\nabla\phi(X_s^{j,N}))\cdot F_{\epsilon}(X_s^{i,N}-X_s^{j,N})ds-\int_0^t\Delta\phi(X_s^{i,N})ds\Big].
\end{multline}
Now we apply the It\^{o}'s formula to $\phi\in C_b^2(\bbR^3)$ and obtain
\begin{multline}\label{step3.3}
\sum_{i=1}^N(\phi(X_t^{i,N})-\phi(X_0^{i,N}))=\frac{1}{N}\sum_{i=1}^N\sum_{k=1,k\neq i}^N\int_0^t\nabla\phi(X_s^{i,N})\cdot F(X_s^{i,N}-X_s^{k,N})ds \\
+\sum_{i=1}^N\int_0^t\Delta\phi(X_s^{i,N})ds
    +\sqrt{2}\sum_{i=1}^N\int_0^t\nabla\phi(X_s^{i,N})\cdot dB_s^i .
\end{multline}
Note that by symmetry
\begin{multline}\label{step3.4}
\sum_{j=1}^N\frac{1}{N}\sum_{i=1}^N\sum_{k=1,k\neq i}^N\int_0^t\nabla\phi(X_s^{i,N})\cdot F(X_s^{i,N}-X_s^{k,N})ds \\
    =\frac{1}{2}\sum_{i=1}^N\sum_{j=1,j\neq i}^N\int_0^t(\nabla\phi(X_s^{i,N})-\nabla\phi(X_s^{j,N}))\cdot F(X_s^{i,N}-X_s^{j,N})ds .
\end{multline}
Therefore, one has
\begin{multline}\label{step3.5}
    \calk_{\psi_\epsilon}(\mu^N)=\frac{1}{2N^2}\sum_{i=1}^N\sum_{j=1,j\neq i}^N\int_0^t\Big[(\nabla\phi(X_s^{i,N})-\nabla\phi(X_s^{j,N}))\\
    \cdot (F(X_s^{i,N}-X_s^{j,N})-F_{\epsilon}(X_s^{i,N}-X_s^{j,N}))\Big]ds
    +\frac{\sqrt{2}}{N}\sum_{i=1}^N\int_0^t\nabla\phi(X_s^{i,N})\cdot dB_s^i
\end{multline}
and thus
\begin{multline}\label{step3.6}
\bbE[|\calk_{\psi_\epsilon}(\mu^N)|]\le\frac{N-1}{2N}\int_0^t\iint_{\bbR^{3}\times\bbR^3}\rho_s^{(2),N}(x,y) \Big|(\nabla\phi(x)-\nabla\phi(y))\\
\cdot(F(x-y)-F_{\epsilon}(x-y)) \Big|dxdyds
        +\bbE\left[\Big|\frac{\sqrt{2}}{N}\sum_{i=1}^N\int_0^t\nabla\phi(X_s^{i,N})\cdot dB_s^i\Big|\right].
\end{multline}
Again note that $F_\epsilon(x)=F(x)$ when $|x|\ge\epsilon$, one has
\begin{equation}\label{step3.7}
    \begin{split}
        &\int_0^t\iint_{\bbR^{3}\times\bbR^3}\rho_s^{(2),N}(x,y)|(\nabla\phi(x)-\nabla\phi(y))\cdot(F(x-y)-F_{\epsilon}(x-y))|dxdyds\\
        &\le 2C_d\|\nabla^2\phi\|_{\infty}\int_0^t\iint_{|x-y|<\epsilon}\frac{\rho_s^{(2),N}(x,y)}{|x-y|}dxdyds.
    \end{split}
\end{equation}
Applying Lemma \ref{fisherIII} once more with $\frac{1}{3}<\beta<\frac{2}{3}$ and $\gamma=1$, one obtains
\begin{equation}\label{step3.8}
    \begin{split}
        &\int_0^t\iint_{|x-y|<\epsilon}\frac{\rho_s^{(2),N}(x,y)}{|x-y|}dxdyds\le C_\beta\epsilon^{3\beta-1}\int_0^t I(\rho_s^{(2),N})^{\frac{3\beta}{2}}ds\\
        &\le C(\beta, T)\epsilon^{3\beta-1} \left(\int_0^t I(\rho_s^{(2),N}(x,y))ds \right)^{\frac{3\beta}{2}}\le C \epsilon^{3\beta-1}.
    \end{split}
\end{equation}
Here the constant $C=C(\phi,T,\beta, \rho_0)$ comes from \eqref{eq:uniformmarginal} for $j=2$.

For the second term of \eqref{step3.6}, from the independence of the Brownian motions $\{B_t^i\}_{i=1}^N$, one can easily calculate its second moment
\begin{equation}\label{step3.9}
    \begin{split}
        \bbE \left[\Big|\frac{\sqrt{2}}{N}\sum_{i=1}^N\int_0^t\nabla\phi(X_s^{i,N})\cdot dB_s^i \Big|^2 \right]&=\frac{2}{N^2}\sum_{i=1}^N\int_0^t\bbE[|\nabla\phi(X_s^{i,N})|^2]ds
        \le\frac{2}{N}T\|\nabla \phi\|_{\infty}^2,
    \end{split}
\end{equation}
which implies
\begin{equation}\label{step3.10}
    \bbE \left[\Big|\frac{\sqrt{2}}{N}\sum_{i=1}^N\int_0^t\nabla\phi(X_s^{i,N})\cdot dB_s^i \Big| \right]
    \le\frac{C}{\sqrt{N}}.
\end{equation}

Plugging \eqref{step3.8} and \eqref{step3.10} into \eqref{step3.6}, one finds that for any $\epsilon,N>0$,
\begin{equation}\label{step3.11}
    \bbE[|\calk_{\psi_\epsilon}(\mu^N)|]\le C\left(\frac{1}{\sqrt{N}}+\epsilon^{3\beta-1}\right).
\end{equation}
\textbf{Step 4} Finally, we combine the estimates above together.

Plugging \eqref{step3.11} into \eqref{step3.1}, one finds that
\begin{equation}\label{step4.1}
    \bbE[|\calk_{\psi_\epsilon}(\mu)|]=\lim_{N\to\infty}\bbE[|\calk_{\psi_\epsilon}(\mu^N)|]\le C\epsilon^{3\beta-1}.
\end{equation}
By \eqref{step2.3},
\begin{equation}\label{step4.2}
    \bbE[|\calk_\psi(\mu)-\calk_{\psi_\epsilon}(\mu)|]\le C \epsilon^{3\beta-1}.
\end{equation}
Finally by \eqref{step4.1} and \eqref{step4.2}
\begin{equation}\label{step4.3}
    \bbE[|\calk_\psi(\mu)|]\le\liminf_{\epsilon\to0}(\bbE[|\calk_\psi(\mu)-\calk_{\psi_\epsilon}(\mu)|]+\bbE[|\calk_{\psi_\epsilon}(\mu)|])=0.
\end{equation}
This is the desired conclusion.
\end{proof}

Note that in \eqref{weaksol1}, we have symmetrized equation \eqref{weaksol}.
This symmetrization technique reduces the singularity from $|x|^{-3}$ to $|x|^{-2}$ so that Lemma \ref{fisherIII} can be applied to control the singularity. This is one of the important observations in this work. The bottleneck for general $d$ is that the singularity allowed in Lemma \ref{fisherIII} is only $(0, 2)$ for all $d$. In fact, for $d\ge4$ cases, Proposition \ref{pro:uniformboundenergy} below (the proof does not rely on $d$) actually implies that  \eqref{step1.1} still holds. Therefore, Step 1 and Step 2 of the proof is still valid with \eqref{step2.3} replaced by 
$\lim_{\e\to0}\bbE[|\calk_\psi(\mu)-\calk_{\psi_\epsilon}(\mu)|] = 0$.
However, the difficulty arises from the $N$ particle system \eqref{step3.8}, where the Fisher Information no longer provides the uniform estimate and we know nothing about 
\[
\lim_{\e\to 0}\lim_{N\to\infty}\int_0^t\iint_{|x-y|<\epsilon}\frac{\rho_s^{(2),N}(x,y)}{|x-y|^{d-2}}dxdyds.
\] Recalling the proof of Lemma \ref{fisherIII}, if we can find better uniform $L^p$ estimates for the density of $X_t^{i,N}-X_t^{j,N}$, then one might be able to pass the limit for $d\ge4$ cases.
 Hence, we think for general $d\ge 4$, the entropy way does not work unless new estimates are found.

We now give some $L^p$ estimates for the density $\rho$ of the limit measure $\mu$. For the convenience, we will then reserve $h$ as
\begin{gather}
h=(-\Delta)^{-1}\rho=g*\rho.
\end{gather}
\begin{lemma}\label{limitmeasure:lp}
Let $d=3$. Let $\rho$ be the density of the (random) limit measure $\mu$. Then for a.s. $\oino$, we have the following claims:
\[
\begin{split}
\rho &\in L^{\frac{2q}{4q-3}}(0, T; \dot{W}^{1,q})\cap L^{\frac{2p}{3(p-1)}}(0, T; L^p),~~ p\in [1,3],q\in [1, 3/2],\\
\nabla h &\in L^{2p_1/(2p_1-3)}(0, T; L^{p_1}),~p_1\in (3/2, \infty).
\end{split}
\]
Consequently, $\rho\nabla h\in L^1(0, T; L^1(\bbR^3))$ a.s..
\end{lemma}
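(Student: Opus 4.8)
The plan is to derive all three bounds from a single input: by Proposition~\ref{tightII}(iv), $\int_0^T I_1(\mu_s)\,ds=\int_0^T I(\rho_s)\,ds<\infty$ for a.s.\ $\omega$, so I fix such an $\omega$ and argue deterministically thereafter. The key observation is that Lemma~\ref{fisherII} converts a pointwise-in-time Fisher information bound into pointwise-in-time $\dot W^{1,q}$ and $L^p$ bounds, and that the time exponents in the statement have been tuned so that the resulting power of $I(\rho_s)$ is exactly $1$, hence time-integrable against $\int_0^T I(\rho_s)\,ds<\infty$.

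First I would treat $\rho$ itself. For $q\in[1,3/2]$, Lemma~\ref{fisherII} with $d=3$ gives $\|\nabla\rho_s\|_{L^q}\le C\,I(\rho_s)^{2-\frac{3}{2q}}$; raising to the power $\alpha=\frac{2q}{4q-3}$ and noting $2-\frac{3}{2q}=\frac{4q-3}{2q}$ so that $\alpha\big(2-\frac{3}{2q}\big)=1$, I obtain $\int_0^T\|\nabla\rho_s\|_{L^q}^{\alpha}\,ds\le C\int_0^T I(\rho_s)\,ds<\infty$, which is the $\dot W^{1,q}$ claim. Identically, for $p\in[1,3]$ the bound $\|\rho_s\|_{L^p}\le C\,I(\rho_s)^{\frac{3(p-1)}{2p}}$ raised to the power $\beta=\frac{2p}{3(p-1)}$ produces exponent $\beta\cdot\frac{3(p-1)}{2p}=1$, so $\rho\in L^{\beta}(0,T;L^p)$.

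For $\nabla h$ I would write $\nabla h=\nabla g*\rho$, and since $\nabla g(x)=-(d-2)C_d\,x/|x|^d$ behaves like $|x|^{-2}$ in $d=3$, estimate the convolution by the Hardy--Littlewood--Sobolev inequality applied to the kernel $|x|^{-2}$ on $\bbR^3$: for $p_1\in(3/2,\infty)$ one gets $\|\nabla h_s\|_{L^{p_1}}\le C\|\rho_s\|_{L^{a}}$ with $\frac1a=\frac13+\frac1{p_1}$, i.e.\ $a=\frac{3p_1}{p_1+3}\in(1,3)$, the admissibility condition $1<a<p_1<\infty$ being precisely what HLS requires here. The point is then purely algebraic: the time exponent $\frac{2a}{3(a-1)}$ coming from the already-proved $L^a$ bound for $\rho$ simplifies exactly to $\frac{2p_1}{2p_1-3}$, so $\int_0^T\|\nabla h_s\|_{L^{p_1}}^{2p_1/(2p_1-3)}\,ds\le C\int_0^T\|\rho_s\|_{L^a}^{2a/(3(a-1))}\,ds<\infty$.

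Finally, for $\rho\nabla h\in L^1(0,T;L^1(\bbR^3))$ I would take the admissible choice $p_1=3$, giving $\nabla h\in L^2(0,T;L^3)$, and pair it with the $p=3/2$ instance of the $L^p$ bound, namely $\rho\in L^2(0,T;L^{3/2})$. Then Hölder in space gives $\|\rho_s\nabla h_s\|_{L^1}\le\|\rho_s\|_{L^{3/2}}\|\nabla h_s\|_{L^3}$, and Cauchy--Schwarz in time closes the argument. I expect the only genuinely delicate point to be the convolution estimate for $\nabla h$: the kernel $\nabla g\sim|x|^{-2}$ lies in no $L^r(\bbR^3)$ (only in weak-$L^{3/2}$), so ordinary Young's inequality is unavailable and one must invoke HLS (equivalently the weak Young inequality), taking care that the endpoints $a=1$ (at $p_1=3/2$) and $a=3$ (as $p_1\to\infty$) are excluded, consistent with the open range $p_1\in(3/2,\infty)$. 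Everything else reduces to verifying that the stated time exponents make each power of $I(\rho_s)$ integrate to one.
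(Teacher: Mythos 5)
Your proposal is correct and is exactly the argument the paper intends: the paper's own (skipped) proof consists of combining the a.s.\ finiteness of $\int_0^T I(\rho_s)\,ds$ from \eqref{fisher:est2} with Lemma \ref{fisherII} for the $\rho$ claims, and the Hardy--Littlewood--Sobolev inequality applied to $\nabla h=\nabla g*\rho$ for the $\nabla h$ claim. Your exponent bookkeeping (each power of $I(\rho_s)$ integrating to one, $a=\tfrac{3p_1}{p_1+3}\in(1,3)$, and the H\"older pairing $\rho\in L^2(0,T;L^{3/2})$ with $\nabla h\in L^2(0,T;L^3)$) checks out and correctly supplies the details the paper omits.
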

The claims for $\rho$ follow from equation \eqref{fisher:est2} and Lemma \ref{fisherII}.
The claims of $\nabla h$ are due to the Hardy-Littlewood-Sobolev inequality since $\nabla h=\nabla g*\rho$. We skip the details.

By Proposition \ref{asweak1} and Lemma \ref{limitmeasure:lp}, now we are able to prove that the density of the limit measure $\mu$ is a.s. a weak solution for the equation \eqref{eq:pnp}.

\begin{theorem}\label{asweak2}
Suppose $d=3$ and $\rho_0,\mu_N,\mu, \rho$ satisfy the assumptions in Proposition \ref{asweak1}, i.e., the common density $\rho_0$ satisfies $H(\rho_0)<\infty$, $m_2(\rho_0)<\infty$ and $\mathcal{E}(\rho_0)<\infty$, while the random measure $\mu$ on $C([0,T];\bbR^3)$ is a limit point of $\mu^N$ under the topology induced by convergence in law with a.s. density $\rho$. Then for a.s. $\oino$, $\rho$ is a weak solution to the nonlinear Fokker-Planck equation \eqref{eq:pnp}.
\end{theorem}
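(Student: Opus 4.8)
The plan is to deduce the full weak formulation of Definition \ref{weaksol:def} from the symmetrized identity \eqref{weaksol1} already secured in Proposition \ref{asweak1}, dividing the work into a regularity check, an algebraic (symmetrization) reduction, and a measure-theoretic upgrade of quantifiers.

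First I would verify that the density $\rho$ meets the structural requirements of Definition \ref{weaksol:def}. Since $\rho_t$ is a probability density for a.e.\ $t$, one has $\rho\in L^\infty(0,T;L^1(\bbR^3))$ automatically, while the bound $\rho\nabla h\in L^1(0,T;L^1(\bbR^3))$ is exactly the last assertion of Lemma \ref{limitmeasure:lp}. The continuity $\rho\,dx\in C([0,T];C_b(\bbR^3)')$ follows because $\mu_t=(\pi_t)_\#\mu$ depends weakly continuously on $t$ (the general property of time marginals of path-space measures recorded after \eqref{eq:mut}), combined with the fact that $\mu_t=\rho_t\,dx$ wherever the density exists, by Proposition \ref{tightII}.

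Next comes the algebraic heart of the argument, the symmetrization identity that links \eqref{weaksol1} to \eqref{weaksol}. For fixed $s$ and $\omega$ the integrand is absolutely integrable, since the estimate \eqref{step1.1} together with Lemma \ref{limitmeasure:lp} guarantees $\iint|x-y|^{-1}\rho_s(x)\rho_s(y)\,dxdy<\infty$ a.s., which legitimizes the change of variables $x\leftrightarrow y$ via Fubini. Using that $F$ is odd and that $\rho_s(x)\rho_s(y)$ is symmetric, one obtains
\[
\tfrac12\iint_{\bbR^3\times\bbR^3}(\nabla\phi(x)-\nabla\phi(y))\cdot F(x-y)\rho_s(x)\rho_s(y)\,dxdy
=\int_{\bbR^3}\nabla\phi(x)\cdot(F*\rho_s)(x)\,\rho_s(x)\,dx,
\]
and since $F=-\nabla g$ yields $F*\rho_s=-\nabla(g*\rho_s)=-\nabla h_s$, the symmetrized nonlinear term of \eqref{weaksol1} is identified with the drift term of \eqref{weaksol}. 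Hence, for each fixed $\phi\in C_c^2(\bbR^3)\subset C_b^2(\bbR^3)$ and each fixed $t$, equation \eqref{weaksol} holds for a.s.\ $\oino$.

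The main obstacle is that the null set produced by Proposition \ref{asweak1} depends on the pair $(\phi,t)$, whereas Definition \ref{weaksol:def} demands a single null set outside of which \eqref{weaksol} holds simultaneously for all $t\in[0,T]$ and all $\phi\in C_c^2(\bbR^3)$. To remove this dependence I would fix a countable family $\mathcal{D}\subset C_c^2(\bbR^3)$ with the property that every $\phi\in C_c^2$ is a $C^2$-limit of a sequence in $\mathcal{D}$ with uniformly bounded supports, and then take the countable union of the exceptional sets over $\phi\in\mathcal{D}$ and rational $t$; off this single null set \eqref{weaksol} holds for all $\phi\in\mathcal{D}$ and all rational $t$. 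Continuity in $t$ then promotes rational times to all $t$: the term $\langle\rho_t,\phi\rangle=\langle\mu_t,\phi\rangle$ is continuous by the weak continuity of $t\mapsto\mu_t$, while the two time-integral terms are absolutely continuous in $t$ because $\rho\nabla h\in L^1(0,T;L^1)$ and $|\langle\rho_s,\Delta\phi\rangle|\le\|\Delta\phi\|_\infty$. Finally, a general $\phi\in C_c^2$ is approximated by $\phi_n\in\mathcal{D}$ with $\phi_n\to\phi$, $\nabla\phi_n\to\nabla\phi$, $\Delta\phi_n\to\Delta\phi$ uniformly on a common compact support, and one passes to the limit in each term of \eqref{weaksol} using $\|\rho_t\|_1=1$ and $\rho\nabla h\in L^1(0,T;L^1)$. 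This yields, for a.s.\ $\oino$, the validity of \eqref{weaksol} for every $t\in[0,T]$ and every $\phi\in C_c^2(\bbR^3)$, which is precisely the assertion that $\rho$ is a weak solution.
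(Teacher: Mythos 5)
Your proposal is correct and follows essentially the same route as the paper's proof: both rely on Proposition \ref{asweak1} together with the integrability bound \eqref{step1.1}, upgrade the quantifiers via a countable dense subset of $C_c^2(\bbR^3)$ and rational times combined with the weak continuity of $t\mapsto\mu_t$ and the absolute continuity of the time integrals, and finally pass between the symmetrized identity \eqref{weaksol1} and the standard form \eqref{weaksol} using the oddness of $F$ and $\rho\nabla h\in L^1(0,T;L^1(\bbR^3))$ from Lemma \ref{limitmeasure:lp}. The only differences are cosmetic — you desymmetrize first and then do the quantifier upgrade on \eqref{weaksol}, while the paper upgrades quantifiers on \eqref{weaksol1} (treating its left side as a bounded linear functional on $C_c^2$) and desymmetrizes last — and these are interchangeable.
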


\begin{proof}
First we fix $\phi$ and show that \eqref{weaksol1} holds for all $t\in [0,T]$ and a.s. $\oino$. In fact, by Proposition \ref{asweak1} and \eqref{step1.2}, the following set has probability $1$:
\begin{multline}
A=\Big\{\oino|\int_0^t\int_{\bbR^{3}\times \bbR^3}|(\nabla\phi(x)-\nabla\phi(y))\cdot F(x-y)|\mu_s(dx)\mu_s(dy)ds<\infty,\\
 \text{\eqref{weaksol1} holds for } t\in[0,T]\cap \mathbb{Q}  \Big\}.
\end{multline}
For any probability measure $\mu\in\bbP(C([0,T];\bbR^3)))$, $\psi\in C_b(\bbR^3)$ and  $t_n\to t$ we may apply the dominant convergence theorem
	\begin{equation}
	\int_{C([0,T];\bbR^3)}\psi(X_{t_n})\mu(dX)\to \int_{C([0,T];\bbR^3)}\psi(X_{t})\mu(dX),
	\end{equation}
	which gives
	\begin{equation}\label{weakl1cont}
	\lim_{n\to\infty}\int_{\bbR^{3}}\psi(x)\mu_{t_n}(dx)=\int_{\bbR^{3}}\psi(x)\mu_{t}(dx)
	\end{equation}
	by Lemma \ref{substitution}. From \eqref{weakl1cont} we see that both $\inner{\Delta\phi}{\mu_t}$ and $\inner{\phi}{\mu_t}$ are continuous functions on $[0,T]$. The continuity them implies that for $\omega\in A$, \eqref{weaksol1} holds for all $t\in [0, T]$.
	
Now we show that for a.s. $\oino$, $\rho(\omega)$ satisfies \eqref{weaksol1} both for all $t\in[0,T]$ and all $\phi\in C_c^2(\bbR^3)$. In fact, since $C_c^2(\bbR^3)$ is separable (note that $C_b^2(\bbR^3)$ is not separable), there is a countable dense set $\{\phi_n\}$. Then, for a.s. $\oino$, $\mu(\omega)$ satisfies \eqref{weaksol1} for all $t\in[0,T]$ and $\phi=\phi_n$. Now in light of \eqref{step1.2}, for a.s. $\oino$, the left side of \eqref{weaksol1} can be viewed as a bounded linear functional on $C_c^2(\bbR^3)$. The conclusion then follows from the density of $\{\phi_n\}$.

Lastly, $\rho \nabla h\in L^1(0, T; L^1(\bbR^3))$ from Lemma \ref{limitmeasure:lp}, and we can then change the symmetric integral equation \eqref{weaksol1} into the usual one
\eqref{weaksol}.
\end{proof}

The weak solution defined above has the minimal regularity requirement. In fact, the system we consider could give more information and we can improve the regularity.  We first of all have the following important claim about the energy.
\begin{proposition}\label{pro:uniformboundenergy}
Consider a general dimension $d\ge 3$. Suppose $\mu$ is any limit point of the $\mu^N$ which a.s. has density as we have seen. Then for a.s. $\oino$, the energy
\begin{gather}
\mathscr{E}(t , \omega):=\iint_{\bbR^d\times\bbR^d} g(x-y)\mu_t(dx)\mu_t(dy)
\end{gather}
is bounded by the initial energy:
\[
\sup_{t\in [0, T]}\mathscr{E}(t, \omega)\le \mathcal{E}(\rho_0)=\iint_{\bbR^d\times\bbR^d} g(x-y)\rho_0(x)\rho_0(y)\,dxdy.
\]
\end{proposition}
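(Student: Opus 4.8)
The quantity to control is $\mathscr{E}(t,\omega)=\iint g(x-y)\mu_t(dx)\mu_t(dy)=\|\nabla h_t\|_2^2$ with $h=g*\rho$, and the only uniform input is the $N$-particle energy bound \eqref{energy:est2}, i.e. $\bbE\mathscr{E}_N(t)\le\frac{N-1}{N}\mathcal{E}(\rho_0)$, together with the convergence in law $\mu^N\to\mu$. The structural difficulty is that $\iint g\,\mu_t^N(dx)\mu_t^N(dy)=+\infty$ because of the diagonal, so the empirical measures cannot be fed directly into the lower-semicontinuous energy functional; the whole game is to discard the diagonal.

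The plan is to first prove an off-diagonal lower-semicontinuity inequality by truncation. For $M>0$ set $g_M=g\wedge M$, which is bounded and continuous since $g\ge 0$ for $d\ge 3$. For weakly convergent empirical measures $\nu^N=\frac1N\sum\delta_{x_i}\rightharpoonup\nu$ the functional $\nu\mapsto\iint g_M\,\nu\otimes\nu$ is continuous (the deterministic content of Lemma \ref{convinlaw}), and $\iint g_M\,\nu^N\otimes\nu^N=\frac1{N^2}\sum_{i\ne j}g_M(x_i-x_j)+\frac{M}{N}$. Letting $N\to\infty$, using $g_M\le g$ and $\frac MN\to 0$, and then sending $M\to\infty$ by monotone convergence, I would obtain $\iint g\,\nu\otimes\nu\le\liminf_N\frac1{N^2}\sum_{i\ne j}g(x_i-x_j)=\liminf_N 2\mathscr{E}_N[\nu^N]$, in which the divergent diagonal has been removed. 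Applied pointwise in $t$ to $\mu_t^N\rightharpoonup\mu_t$ and combined with \eqref{energy:est2}, this already yields the fixed-time expectation bound $\bbE\,\mathscr{E}(t,\omega)\le\iint g\rho_0\rho_0$.

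The remaining and harder task is to upgrade this to an almost sure, uniform-in-time bound with the sharp constant. Here I would revisit the It\^o computation behind \eqref{noncollision6}--\eqref{noncollision7}: the process $2N\mathscr{E}_N(t)$ equals $2N\mathscr{E}_N(0)$ minus a nonnegative increasing part minus a martingale $M_t$ whose bracket is controlled by \eqref{noncollision7}, so that $\bbE\sup_{t\le T}M_t^2\lesssim \bbE M_T^2\lesssim N\,\mathcal{E}(\rho_0)$ by Doob's inequality. Consequently $\sup_t\mathscr{E}_N(t)\le\mathscr{E}_N(0)+\frac1{2N}\sup_t|M_t|$ with $\bbE[\frac1{2N}\sup_t|M_t|]=O(N^{-1/2})\to 0$: the energy fluctuations vanish. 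Since $\mathscr{E}_N(0)\to\mathcal{E}(\rho_0)$ by the law of large numbers (and $\mu_0=\rho_0$ a.s.), this gives $\bbE\big(\sup_t\mathscr{E}_N(t)-\mathcal{E}(\rho_0)\big)^+\to 0$. Passing to a Skorokhod representation so that $\mu^N\to\mu$ almost surely in $\bfP(C([0,T];\bbR^3))$ (this $L^1$ statement, being a statement about laws, is preserved), I can extract a subsequence along which $\limsup_N\sup_t 2\mathscr{E}_N(t)\le\iint g\rho_0\rho_0$ almost surely. Combining with the off-diagonal inequality applied for each $t$ to $\mu_t^N\rightharpoonup\mu_t$ gives $\mathscr{E}(t,\omega)\le\iint g\rho_0\rho_0$ for a.e. $t$ and a.s. $\omega$; because $t\mapsto\mu_t$ is weakly continuous and $\nu\mapsto\iint g\,\nu\otimes\nu$ is lower semicontinuous, $t\mapsto\mathscr{E}(t,\omega)$ is lower semicontinuous in $t$ and the bound extends to all $t\in[0,T]$. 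Finally the conclusion, being a property of the law of $\mu$, transfers back to the original space.

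The main obstacle is precisely this second step. The fixed-time argument controls only $\bbE\mathscr{E}(t)$, and an expectation bound alone cannot produce an almost sure uniform-in-time bound with the sharp constant; what rescues it is that the martingale part of the empirical energy is of size $O(N^{-1/2})$ and the supermartingale drift is dissipative, so the energy concentrates. Turning this into the clean statement $\bbE(\sup_t\mathscr{E}_N(t)-\mathcal{E}(\rho_0))^+\to 0$ and then transporting it, via equality of laws and a Skorokhod coupling, into an almost sure statement about the limit $\mu$ is where the care lies. I would also have to be attentive that the supermartingale decomposition is first established for the mollified system and passed to the limit $\e\to 0$ using the no-collision result of Theorem \ref{noncollision}, so that $\mathscr{E}_N^\e(t)\to\mathscr{E}_N(t)$ pathwise and the bound in \eqref{noncollision7} survives uniformly in $\e$.
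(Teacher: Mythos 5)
Your proposal is correct and follows essentially the same route as the paper's proof: the It\^o supermartingale decomposition of the mollified empirical energy with Doob's inequality giving the $O(N^{-1/2})$ martingale fluctuation, passage $\e\to 0$ via the non-collision theorem, convergence of the initial empirical energy, the $L^1$ statement $\bbE\bigl(\sup_t\mathscr{E}_N(t)-\mathcal{E}(\rho_0)\bigr)^+\to 0$, and a Skorokhod representation combined with lower semicontinuity of the truncated energy functional. The only (cosmetic) differences are your truncation $g\wedge M$ with the explicit diagonal correction $M/N$ in place of the paper's $g^{(m)}$ vanishing at the origin, and your fixed-$t$ semicontinuity argument patched by lower semicontinuity in $t$, where the paper works directly with the uniform-in-time functional $Q(\nu)=\sup_t Q_t(\nu)$ on path space.
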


\begin{proof}
From \eqref{noncollision6}, it holds that
\begin{equation}\label{energy:finite1}
	\sup_{0\le t\le T}\frac{1}{N^2}\sum_{i,j=1,i\neq j}^Ng^\e(X_t^{i,\e}-X_t^{j,\e})\le \frac{1}{N^2}\sum_{i,j=1,i\neq j}^Ng^{\e}(X_0^i-X_0^j)+\frac{1}{N}\sup_{0\le t\le T}(-M_t^\e).
\end{equation}

Since for fixed $N$, $g^\e(X_t^{i,\e}-X_t^{j,\e})=g(X_t^{i}-X_t^{j})$ for all $t\in [0, T]$ outside a set $A_{\e}$ whose probability goes to zero as $\e\to 0$ by the non-collision result, we then have
almost surely that
\[
\sup_{0\le t\le T}\frac{1}{N^2}\sum_{i,j=1,i\neq j}^Ng(X_t^{i}-X_t^{j}) =\lim_{\epsilon\to0}\sup_{0\le t\le T}\frac{1}{N^2}\sum_{i,j=1,i\neq j}^Ng^\e(X_t^{i,\e}-X_t^{j,\e}).
\]
Fatou's lemma gives us that
\begin{gather}
\begin{split}
& \mathbb{E}\left[\sup_{0\le t\le T}\frac{1}{N^2}
\sum_{i,j: i\neq j}g(X_t^i-X_t^j)-\mathcal{E}(\rho_0) \right]^+ \\
&\le \liminf_{\e\to 0}\mathbb{E}\left( \frac{1}{N^2}\sum_{i,j:i\neq j}g^{\e}(X_0^i-X_0^j)-\mathcal{E}(\rho_0) \right)^+
+\liminf_{\e\to 0}\frac{1}{N}\mathbb{E}\Big[\sup_{0\le t\le T}(-M_t^\e)\Big]^+.
\end{split}
\end{gather}

Doob's $L^p$ inequality for martingale  (p.203, Theorem 7.31 in \cite{klebaner2012}) and \eqref{noncollision7} imply that
\begin{equation}\label{energy:finite2}
\frac{1}{N}\left\|\sup_{0\le t\le T}(-M_t^\e) \right\|_{L^2(\mathbb{P})}\le
\frac{1}{N}\left(\mathbb{E}\sup_{0\le t\le T}(-M_t^{\e})^2 \right)^{1/2}\le \frac{2}{N}\bbE[(M_T^\e)^2]^\frac{1}{2}\le \frac{C_{d,\rho_0}}{\sqrt{N}}.
\end{equation}
Hence, the last term goes to zero as $N\to \infty$.
Moreover, at $t=0$, the joint distribution of $(X_0^i, X_0^j, X_0^m, X_0^n)$ is simply $\rho_0^{\otimes 4}$ if they are all distinct. In the square of $\sum_{i,j:i\neq j}g^{\e}(X_0^i-X_0^j)$, the number of terms where some $X_0^i$'s are repeated is $O(N^3)$. Hence, most terms are those where the four $X_0^i$'s are distinct. Using this fact and direct computation, we find
\begin{multline}
\lim_{N\to\infty}\liminf_{\e\to 0}\mathbb{E}\Big(\frac{1}{N^2}\sum_{i,j:i\neq j}g^{\e}(X_0^i-X_0^j)-\mathcal{E}(\rho_0)\Big)^+ \\
\le \lim_{N\to\infty}\liminf_{\e\to 0}\left(\bbE\Big|\frac{1}{N^2}\sum_{i,j:i\neq j}g^{\e}(X_0^i-X_0^j)-\mathcal{E}(\rho_0) \Big|^2 \right)^{\frac{1}{2}}=0.
\end{multline}

It follows that
\[
\lim_{N\to\infty}\mathbb{E} \Big[\sup_{0\le t\le T}\frac{1}{N^2}
\sum_{i,j: i\neq j}g(X_t^i-X_t^j)-\mathcal{E}(\rho_0) \Big]^+=0.
\]

Now, for $\nu\in \bfP(C[0,T],\bbR^d)$, we define
\begin{gather}
Q_t(\nu):=\iint_{\mathcal{D}^c}g(x-y)\nu_t(dx)\nu_t(dy),
\end{gather}
where $\nu_t$ is defined in \eqref{eq:mut}.

We also define
\begin{gather}
Q(\nu):=\sup_{0\le t\le T}Q_t(\nu)= \sup_{0\le t\le T}\iint_{\mathcal{D}^c}g(x-y)\nu_t(dx)\nu_t(dy).
\end{gather}
We claim that if we consider the topology induced by weak convergence on $\bfP(C[0,T],\bbR^d)$, then $Q(\nu)$ is a lower semi-continuous functional on $\bfP(C[0,T],\bbR^d)$.  In fact,  we can define
\[
Q_t^{(m)}(\nu):=\iint_{(C([0,T];\bbR^d))^2}
g^{(m)}(X(t)-Y(t)) \nu(dX) \nu(dY),
\]
where $g^{(m)}(x)=g(|x|)$ if $|x|\ge 1/m$ and $g^{(m)}(x)=m|x| g(1/m)$ if $|x|<1/m$. Since $g^{(m)}$ is a continuous bounded function, by \cite[Theorem 2.8]{billingsley2013} (p. 23), $Q^{(m)}$ is a continuous functional.
Moreover, by monotone convergence theorem, $\forall \nu\in \bfP(C[0,T],\bbR^d)$,
$Q_t(\nu)=\sup_{m}Q^{(m)}(\nu)$.
Hence, $Q_t$ is lower semicontinuous, and thus $Q=\sup_t Q_t$ is lower semicontinuous.

From previous proof, a subsequence of $\{\mu_N\}$ converges in law to some random measure $\mu$. Since $\bfP(C[0,T],\bbR^d)$ is now a Polish space, from \cite[p.415, Theorem 11.7.2]{dudley2002} there exists some probability space $(\tilde{\Omega},\tilde{\mathcal{F}}, \tilde{\bbP})$ and random measures $\tilde{\mu}_N,\tilde{\mu}: \ (\tilde{\Omega},\tilde{\mathcal{F}}, \tilde{\bbP})\to\bfP(C[0,T],\bbR^d)$ such that $\tilde{\mu}_N\to\tilde{\mu}$ a.s., and $\tilde{\mu}_N,\tilde{\mu}$ has the same law as $\mu_N,\mu$. By the Fatou Lemma and the lower semicontinuity, we have
\begin{multline}
\bbE[(Q(\mu)-\mathcal{E}(\rho_0))^+]=\tilde{\bbE}[(Q(\tilde{\mu})-\mathcal{E}(\rho_0))^+]\\
\le\liminf_{N\to\infty}\tilde{\bbE}[
(Q(\tilde{\mu}_N)-\mathcal{E}(\rho_0))^+]=\liminf_{N\to\infty}\bbE[(Q(\mu_N)-\mathcal{E}(\rho_0))^+]=0.
\end{multline}

Moreover, since $\mu$ has density almost surely, then almost surely it holds that
\begin{equation}
\sup_{0\le t\le T}\iint_{\bbR^d\times\bbR^d} g(x-y)\mu_t(dx)\mu_t(dy) \le \mathcal{E}(\rho_0).
\end{equation}
\end{proof}

With the above estimate, $\rho\in L^{\infty}(0, T; H^{-1})$ and $\nabla h\in L^{\infty}(0, T; L^2)$. Then, we have the following improved weak solution, and we provide the proof in Appendix \ref{app:missingpf}.
\begin{proposition}\label{pro:improvedweak}
Let $d=3$. Suppose $\mu(\cdot)$ is a time-dependent probability measure, which has a density $\rho$.
Assume that $\rho$ is a weak solution to \eqref{eq:pnp} in the sense of Definition \ref{weaksol:def}.
If moreover, $\int_0^TI(\mu_t)\,dt<\infty$ and $\sup_{t\in [0, T]}\iint_{\mathbb{R}^3\times\mathbb{R}^3} g(x-y)\mu_t(dx)\mu_t(dy)\le \mathcal{E}(\rho_0)$,
then
\begin{enumerate}
\item $\rho\in L^{3r/(5r-6)}(0, T; L^r)$ for $r\in [3/2, 3]$; $\nabla h\in L^{q/(q-2)}(0, T; L^q)$ for $q\ge 2$. Consequently, $\rho\nabla h$ is in $ L^{3p/(8p-6)}(L^p)$ for $1\le p\le\frac{6}{5}$ (recall $h=g*\rho$).

\item In $L^{6/5}((0, T), W^{-1, 12/11})$, it holds that
\begin{gather}
\partial_t\rho=\nabla\cdot(\rho\nabla h)+\Delta\rho.
\end{gather}
Moreover, $\rho$ is a mild solution in $L^{4/3}(0, T; L^{3/2}(\bbR^3))$ so that
\begin{equation}\label{mildsolform}
\rho(t)=e^{t\Delta}\rho_0+\int_0^t e^{(t-s)\Delta}\nabla\cdot(\rho\nabla h)\,ds.
\end{equation}
\end{enumerate}
\end{proposition}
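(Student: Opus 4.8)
The plan is to treat the two parts in turn: first upgrade the space--time integrability of $\rho$, $\nabla h$ and $\rho\nabla h$ (part~1), and then read off both the distributional equation and the Duhamel representation from the weak formulation of Definition~\ref{weaksol:def} (part~2). The only inputs are the two hypotheses $\int_0^T I(\mu_t)\,dt<\infty$ and $\sup_t\iint g(x-y)\mu_t(dx)\mu_t(dy)\le\mathcal E(\rho_0)$, together with Lemma~\ref{fisherII}.

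For $\nabla h$ I would argue cleanly as follows. Writing $h=g*\rho$ we have $-\Delta h=\rho$, so Calder\'on--Zygmund gives $\|\nabla^2 h(t)\|_{L^3}\lesssim\|\rho_t\|_{L^3}$, and the endpoint $p=3$ of Lemma~\ref{fisherII} yields $\|\rho_t\|_{L^3}\le C\,I(\rho_t)$; meanwhile the energy hypothesis is exactly $\|\nabla h(t)\|_{L^2}^2\le\mathcal E(\rho_0)$, i.e. $\nabla h\in L^\infty(0,T;L^2)$. Interpolating the vector field $v=\nabla h$ between $L^2$ and $\dot W^{1,3}$ by Gagliardo--Nirenberg, $\|\nabla h\|_{L^q}\le C\|\nabla h\|_{L^2}^{2/q}\|\nabla^2 h\|_{L^3}^{1-2/q}\le C\,I(\rho_t)^{1-2/q}$ for $q\ge2$, and since $\int_0^T I\,dt<\infty$ the matching time exponent $a(1-2/q)=1$ produces precisely $\nabla h\in L^{q/(q-2)}(0,T;L^q)$. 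The estimate for $\rho$ is in the same spirit but must interpolate the \emph{negative}-order energy information $\rho\in L^\infty(0,T;\dot H^{-1})$ against the Fisher-controlled bounds $\|\nabla\rho_t\|_{L^{3/2}}\le C\,I(\rho_t)$ and $\|\rho_t\|_{L^3}\le C\,I(\rho_t)$ (Lemma~\ref{fisherII} at $q=3/2$ and $p=3$); one interpolates into a positive-order homogeneous Besov space and then Sobolev-embeds into $L^r$, and optimizing the interpolation parameter gives the stated $\rho\in L^{3r/(5r-6)}(0,T;L^r)$ for $r\in[3/2,3]$ (for $r$ near $3$ the $L^3$ endpoint alone suffices). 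Finally $\rho\nabla h\in L^{3p/(8p-6)}(0,T;L^p)$, $p\in[1,6/5]$, is H\"older in space with $\tfrac1p=\tfrac1r+\tfrac1q$, $\tfrac1q=\tfrac1r-\tfrac13$, combined with H\"older in time, since $\rho$ and $\nabla h$ carry the same time exponent.

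For part~2 I would first check that $\Delta\rho=\nabla\cdot(\nabla\rho)$ and $\nabla\cdot(\rho\nabla h)$ both lie in $L^{6/5}(0,T;W^{-1,12/11})$: Lemma~\ref{fisherII} gives $\|\nabla\rho_t\|_{L^{12/11}}\le C\,I(\rho_t)^{5/8}$, hence $\nabla\rho\in L^{8/5}(0,T;L^{12/11})\subset L^{6/5}(0,T;L^{12/11})$ on the finite interval, while part~1 at $p=12/11$ gives $\rho\nabla h\in L^{6/5}(0,T;L^{12/11})$. Since $W^{-1,12/11}=(W^{1,12})^\ast$ and $C_c^2$ is dense in $W^{1,12}$, after integrating the diffusion and drift terms by parts once (to place a single derivative on the test function) the weak identity \eqref{weaksol} extends from $C_c^2$ to all $\phi\in W^{1,12}$, which identifies $\partial_t\rho=\Delta\rho+\nabla\cdot(\rho\nabla h)$ in $L^{6/5}(0,T;W^{-1,12/11})$. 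For the mild form I would apply Duhamel and the smoothing estimate $\|e^{\tau\Delta}\nabla\cdot G\|_{L^{3/2}}\le C\tau^{-3/4}\|G\|_{L^{6/5}}$ with $G=\rho\nabla h$: the homogeneous term obeys $\|e^{t\Delta}\rho_0\|_{L^{3/2}}\le Ct^{-1/2}\|\rho_0\|_{L^1}\in L^{4/3}(0,T)$, and the convolution $\int_0^t(t-s)^{-3/4}\|G(s)\|_{L^{6/5}}\,ds$ is controlled in $L^{4/3}(0,T)$ by the Hardy--Littlewood--Sobolev inequality, giving \eqref{mildsolform} in $L^{4/3}(0,T;L^{3/2})$; the equivalence of the weak and mild formulations is the standard argument of pairing the equation in $W^{-1,12/11}$ against $s\mapsto e^{(t-s)\Delta}\psi$.

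The step I expect to be the main obstacle is the interpolation yielding the \emph{sharp} time exponents, and especially the $\rho$--estimate. The naive route of interpolating the energy bound ($\dot H^{-1}$) directly against $L^3$ is false: a test function made of many well-separated bumps of equal mass violates the would-be inequality $\|\rho\|_{L^{3/2}}\lesssim\|\rho\|_{\dot H^{-1}}^{2/3}\|\rho\|_{L^3}^{1/3}$, even under the mass constraint $\|\rho\|_{L^1}=1$. Consequently the energy can only be combined with the \emph{positive}-order Fisher information, routed through an intermediate Besov space and a Sobolev embedding, and the admissible interpolation parameter (equivalently the requirement that the intermediate smoothness be positive) must be tracked carefully across the whole range $r\in[3/2,3]$ --- this is the genuine technical heart of part~1. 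A secondary difficulty is the borderline time integrability in the Duhamel estimate: the kernel $\tau^{-3/4}$ lies only in weak $L^{4/3}$, so to obtain the \emph{strong} space $L^{4/3}(0,T;L^{3/2})$ one should work with an exponent $p<6/5$ (for which the source time exponent $3p/(8p-6)>1$) and then invoke the genuine Young convolution inequality rather than its weak-type version.
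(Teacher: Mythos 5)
Your handling of $\nabla h$ (Calder\'on--Zygmund plus Gagliardo--Nirenberg between $L^\infty_t L^2_x$ and the $L^1_t$ bound on $\|\nabla^2h\|_{L^3}$), of the product $\rho\nabla h$, and your plan for part 2 are essentially the paper's: the paper also works at $p=12/11$ with the kernel bound $\|\nabla P(\cdot,\tau)\|_{L^{4/3}}\lesssim \tau^{-7/8}$ and invokes the Hardy--Littlewood--Sobolev inequality in time, which already resolves your worry about the weak-$L^{4/3}$ kernel; and the paper's rigorous version of your ``pair against $e^{(t-s)\Delta}\psi$'' step is a space--time mollification $J_1^{\delta}(t)*\rho*J_2^{\e}(x)$, using $\rho\,dx\in C([0,T];C_b(\bbR^3)')$ to make $J_2^\e*\rho$ continuous and to recover $\rho_0$ in the limit $t_1\to 0^+$. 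These parts of your proposal are sound sketches of the same argument.

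The genuine gap is the $\rho$-estimate in part 1. You correctly reject the naive inequality $\|\rho\|_{L^{3/2}}\lesssim\|\rho\|_{\dot{H}^{-1}}^{2/3}\|\rho\|_{L^3}^{1/3}$, but your repair fails in part of the claimed range for the very same reason. Scaling forces the interpolation weight on $\|\nabla\rho\|_{L^{3/2}}$ to be $\theta=(5r-6)/(3r)$, so the intermediate smoothness $2\theta-1$ is negative precisely for $r<12/7$; worse, your own many-bump configuration refutes \emph{every} scaling-consistent inequality $\|\rho\|_{L^r}\le C\|\rho\|_{\dot{H}^{-1}}^{a}\|\nabla\rho\|_{L^{3/2}}^{b}\|\rho\|_{L^3}^{c}$ ($a+b+c=1$) when $r\in[3/2,12/7)$: over $N$ well-separated bumps one has $\|\rho\|_{L^r}\sim N^{1/r}$, $\|\rho\|_{\dot{H}^{-1}}\sim N^{1/2}$, $\|\nabla\rho\|_{L^{3/2}}\sim N^{2/3}$, $\|\rho\|_{L^3}\sim N^{1/3}$, and the right-hand side grows at most like $N^{(7r-3)/(9r)}$, which is strictly smaller than $N^{1/r}$ exactly when $r<12/7$. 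The structural reason is that $I(\rho)\sim N$ is extensive (superadditive) while the norms $\|\nabla\rho\|_{L^{3/2}}\lesssim I$ and $\|\rho\|_{L^3}\lesssim I$ that you extract from Lemma \ref{fisherII} are not: converting the Fisher information into those norms discards exactly the information needed at the endpoint. The paper's proof keeps the nonlinear structure of $I$ in a single duality line, $\int_{\bbR^3}\rho^{3/2}\,dx=\langle\rho,\sqrt{\rho}\rangle\le\|\rho\|_{\dot{H}^{-1}}\|\nabla\sqrt{\rho}\|_{L^2}\lesssim \mathcal{E}(\rho_0)^{1/2}I(\rho_t)^{1/2}$, which gives the endpoint $\rho\in L^{3}(0,T;L^{3/2})$ directly; interpolating with $\rho\in L^1(0,T;L^3)$ then produces the whole family $L^{3r/(5r-6)}(0,T;L^r)$, $r\in[3/2,3]$. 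This endpoint cannot be recovered afterwards from the sub-range $r>12/7$ together with $\|\rho\|_{L^1}=1$ (mass interpolation only yields the time exponent $2r/(3(r-1))$, strictly below $3r/(5r-6)$ for every $r<3$), and your parenthetical that the $L^3$ endpoint alone suffices ``for $r$ near $3$'' in fact holds only at $r=3$ itself. So without the pairing against $\sqrt{\rho}$ --- the one missing idea --- part 1 as stated, and with it the sharp exponents used downstream, is out of reach of your interpolation scheme.
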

Note that the mild solution form here does not necessarily give the continuity of $\rho(t)$ at $t=0$ because we do not know whether the second term goes to $0$ as $t\to 0^+$.

\section{A comment about propagation of chaos in 3D}\label{sec:3dchaos}

We have established the fact that the limit measure is almost surely a weak solution to the nonlinear Fokker-Planck equation \eqref{eq:pnp}.  An important question in the mean field limit research is whether we have propagation of chaos. In other words, we expect the $j$-marginal tends to the tensor product of the limit law $\rho$.  First we recall the following standard equivalent notions of propagation of chaos which can be found in the lecture of Sznitman (Proposition 2.2 in \cite{sznitman1991}).
\begin{definition}
Let $\mathcal{X}$ be a Polish space and $f$ be a probability measure on $\mathcal{X}$. A sequence of symmetric probability measures $f^N$ on $\mathcal{X}^N$ are said to be $f-$chaotic, if one of the three following equivalent conditions is satisfied:

(i) The sequence of second marginals $f^{(2),N}\rightharpoonup f\otimes f$ as $N\to\infty$;

(ii) For all $j\ge1$, the sequence of $j$-th marginals $f^{(j),N}\rightharpoonup f^{\otimes j}$ as $N\to\infty$;

(iii) Let $(X^{1,N}, \ldots, X^{N,N})\in\mathcal{X}^N$ be drawn randomly according to $f^N$. The empirical (random) measure $\mu^N=\frac{1}{N}\sum_i \delta_{X^{i,N}}$ converges in law to the constant probability measure $f$ as $N\to\infty$.

\end{definition}

Note that since $\mathcal{X}$ is a Polish space, there exists a metric $d_0$ on $\mathbf{P}(\mathcal{X})$ such that for $\nu^N,\nu\in\mathbf{P}(\mathcal{X})$, $\nu^N\to\nu$ in law if and only if $d_0(\nu^N,\nu)\to 0$ as $N\to\infty$. Therefore as $f$ is constant, (iii) is equivalent to $\mu^N$ converging to $f$ in probability.  

The key point of propagation of chaos is therefore to establish a strong-weak uniqueness principle for the solutions so that $\rho_t(\omega)$ is a.s. deterministic.
The definition of weak solution in Definition \ref{weaksol:def} is too weak and it is very hard to prove the uniqueness. We need to put more constraints to make it unique. In fact, we have the strong-weak uniqueness principle by assuming $\rho\in L_{\loc}^2((0, T); L^2(\bbR^3))$.
\begin{proposition}\label{prop:uniquenessresult}
Let the initial density $\rho_0 \in H^m(\bbR^3)$ with $m>3/2$.
Suppose $\mu(\cdot)$ is a time-dependent probability measure, which has a density $\rho$.
Assume that $\rho$ is a weak solution to \eqref{eq:pnp} in the sense of Definition \ref{weaksol:def}.
If moreover, $\int_0^TI(\mu_t)\,dt<\infty$, $\rho\in L_{\loc}^2((0, T); L^2(\mathbb{R}^3))$ and
\[
\sup_{t\in [0, T]}\iint_{(\bbR^3)^2} g(x-y)\mu_t(dx)\mu_t(dy)\le \mathcal{E}(\rho_0)=\iint_{(\bbR^3)^2} g(x-y)\rho_0(x)\rho_0(y)\,dxdy,
\]
then $\rho_t$ is the unique strong solution of \eqref{eq:pnp}.
\end{proposition}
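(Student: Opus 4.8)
The plan is to run a weak--strong stability estimate in the $H^{-1}$ metric, which is the natural energy metric for \eqref{eq:pnp}. First I would invoke the appendix (Proposition \ref{pro:strongunique} and the statements following it): since $\rho_0\in H^m(\bbR^3)$ with $m>3/2$, there is a unique nonnegative global strong solution $\bar\rho\in C([0,T];H^m)\cap C([0,T];L^1)$ with conserved mass. Writing $\bar h=g*\bar\rho$, the Sobolev embedding $H^m\hookrightarrow L^\infty$ together with the splitting of $\nabla g*\bar\rho$ into a near-field (controlled by $\|\bar\rho\|_\infty\int_{|z|<1}|z|^{-2}\,dz<\infty$) and a far-field (controlled by $\|\bar\rho\|_1$) shows $\sup_{[0,T]}\|\nabla\bar h\|_\infty<\infty$. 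This $L^\infty$ bound on the \emph{strong} drift is the quantity that will drive the Gr\"onwall argument.

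Next I set $w=\rho-\bar\rho$ and $H=g*w=h-\bar h$, so that $-\Delta H=w$ and $\|w\|_{H^{-1}}^2=\int_{\bbR^3}|\nabla H|^2\,dx$. Differentiating in time and using $\partial_t w=\Delta w+\nabla\cdot(\rho\nabla h-\bar\rho\nabla\bar h)$ together with the decomposition $\rho\nabla h-\bar\rho\nabla\bar h=\rho\nabla H+w\nabla\bar h$, a double integration by parts formally yields
\begin{equation*}
\frac{d}{dt}\frac12\|w\|_{H^{-1}}^2+\|w\|_{L^2}^2+\int_{\bbR^3}\rho\,|\nabla H|^2\,dx=-\int_{\bbR^3}w\,\nabla H\cdot\nabla\bar h\,dx.
\end{equation*}
The crucial structural point is that both dissipative terms on the left are favorable: $\|w\|_{L^2}^2\ge0$ and $\int\rho|\nabla H|^2\ge0$ because $\rho\ge0$ (this uses the repulsive sign of the Coulomb interaction). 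The cross term is then bounded by $\|\nabla\bar h\|_\infty\|w\|_{L^2}\|\nabla H\|_{L^2}=\|\nabla\bar h\|_\infty\|w\|_{L^2}\|w\|_{H^{-1}}$, and Young's inequality absorbs the factor $\|w\|_{L^2}$ into the dissipation, leaving
\begin{equation*}
\frac{d}{dt}\|w\|_{H^{-1}}^2\le\tfrac12\|\nabla\bar h\|_\infty^2\,\|w\|_{H^{-1}}^2.
\end{equation*}
Since $\|\nabla\bar h\|_\infty\in L^\infty(0,T)$ and $w$ starts from $0$, Gr\"onwall forces $w\equiv0$, i.e.\ $\rho=\bar\rho$, which is the unique strong solution.

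The remaining work, and the main obstacle, is to justify this computation for a mere weak solution. The identity above is formal: differentiating $\|w\|_{H^{-1}}^2$ and performing the two integrations by parts require $w\in L^2$ and that $\langle\partial_t w,H\rangle$ be a legitimate pairing. This is exactly where the extra hypotheses enter: $\rho\in L_{\loc}^2((0,T);L^2)$ (and the smoothness of $\bar\rho$) make $w\in L_{\loc}^2((0,T);L^2)$, so $\|w\|_{L^2}^2$ is finite and $H\in H^2_{\loc}$; the bound $\int_0^T I(\mu_t)\,dt<\infty$ and the energy bound $\sup_t\mathscr{E}\le\mathcal{E}(\rho_0)$ supply the $L^p$ and $\nabla h\in L^{\infty}(0,T;L^2)$ controls that allow Proposition \ref{pro:improvedweak} to apply, by which $\rho$ is a mild solution and $\partial_t\rho=\Delta\rho+\nabla\cdot(\rho\nabla h)$ holds in a genuine dual space. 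I would therefore derive the estimate first in integrated form on $[\tau,t]\subset(0,T)$ by mollifying $w$ and passing to the limit, and only then transfer $w(0)=0$ by letting $\tau\to0^+$.

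The delicate point is precisely this initial layer. The remark after Proposition \ref{pro:improvedweak} warns that the Duhamel term need not vanish as $t\to0^+$ in $L^{3/2}$, so one cannot simply read off continuity at $t=0$. Instead I would control $\|w(\tau)\|_{H^{-1}}$ through the mild representation and the smoothing bound $\|e^{(t-s)\Delta}\nabla\cdot F\|_{H^{-1}}\lesssim(t-s)^{-1/2}\|F\|_{L^{6/5}}$ with $F=\rho\nabla h-\bar\rho\nabla\bar h\in L^1(0,T;L^{6/5})$ (the endpoint $p=6/5$ of Proposition \ref{pro:improvedweak}), combined with the weak continuity $\mu_t\rightharpoonup\rho_0$ from Definition \ref{weaksol:def} and the uniform $H^{-1}$ energy bound, to show that $\|w(\tau)\|_{H^{-1}}\to0$. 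Once the initial value is transferred into the integrated inequality, Gr\"onwall closes on all of $[0,T]$ and yields $\rho=\bar\rho$.
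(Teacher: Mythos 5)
Your overall route is the paper's own: both proofs run a Gr\"onwall estimate on the modulated Coulomb energy $\|\nabla H\|_{L^2}^2=\|w\|_{\dot H^{-1}}^2$, both exploit $\rho\ge 0$ through the sign of $\int\rho|\nabla H|^2dx$, and both face the same two obstacles (justifying the differential identity for a merely weak solution, and the $t=0$ layer). Two differences are worth recording. First, your cross-term estimate pays $\|\nabla\bar h\|_\infty\|w\|_{L^2}\|\nabla H\|_{L^2}$ and absorbs $\|w\|_{L^2}^2$ into the dissipation, so the hypothesis $\rho\in L^2_{\loc}((0,T);L^2)$ enters the Gr\"onwall inequality itself; the paper instead rewrites $-\int w\,\nabla H\cdot\nabla\bar h\,dx$ via the stress tensor $\nabla H\otimes\nabla H-\tfrac12|\nabla H|^2 I$ and integrates by parts against $\nabla^2\bar h\in L^\infty$, obtaining $\le C\|\nabla H\|_{L^2}^2$ without needing $w(t)\in L^2$ at that point. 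Both closures are legitimate here. Second, the paper does not pass directly from $\rho\in L^2_{\loc}L^2$ to the energy identity: its Step 1 is a DiPerna--Lions renormalization (commutator) argument upgrading $\rho$ to $L^\infty_{\loc}((0,T);L^p)$, $p\in(1,3]$, which is what yields $\nabla h\in AC(t_0,T;L^2)$ and legitimizes differentiating $\|\nabla h-\nabla\bar h\|_2^2$; your one-line ``mollify $w$ and pass to the limit'' is exactly where that commutator work lives. With $\rho,\nabla^2h\in L^2_{\loc}L^2$ the needed commutator lemma does apply, so I view this as a rigor debt rather than a fatal flaw.

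The genuine gap is your initial-layer mechanism. Your smoothing bound is correct and $F=\rho\nabla h-\bar\rho\nabla\bar h\in L^1(0,T;L^{6/5})$ is indeed the endpoint of Proposition \ref{pro:improvedweak}, but these two facts do not give $\|w(\tau)\|_{H^{-1}}\to0$: the bound
\begin{equation*}
\Bigl\|\int_0^\tau e^{(\tau-s)\Delta}\nabla\cdot F\,ds\Bigr\|_{H^{-1}}\le C\int_0^\tau(\tau-s)^{-1/2}\|F(s)\|_{L^{6/5}}\,ds
\end{equation*}
has no smallness, since $L^1$ integrability in time of $\|F(s)\|_{L^{6/5}}$ is compatible with blow-up of the right-hand side; e.g.\ $\|F(s)\|_{L^{6/5}}\sim s^{-3/4}$ is integrable yet makes the integral $\sim\tau^{-1/4}\to\infty$ as $\tau\to0^+$. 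This is consistent with the paper's own warning after Proposition \ref{pro:improvedweak} that the Duhamel term need not vanish at $t=0$. The step that actually closes the layer --- and the only place the hypothesis $\sup_t\mathscr{E}(t)\le\mathcal{E}(\rho_0)$ is used --- is the paper's expansion of the square:
\begin{equation*}
\|\nabla h(\tau)-\nabla\bar h(\tau)\|_{L^2}^2=\mathscr{E}(\tau)-2\int_{\bbR^3}\bar h(\tau)\,\rho(\tau)\,dx+\mathscr{E}_2(\tau),
\end{equation*}
where weak continuity of $\rho_t\,dx$ at $t=0$ (Definition \ref{weaksol:def}) together with boundedness and continuity of $\bar h$ sends the cross term to twice the initial energy, continuity of the strong solution handles $\mathscr{E}_2$, and the energy hypothesis caps $\mathscr{E}(\tau)$, so the limit is $\le\mathcal{E}(\rho_0)-2\mathcal{E}(\rho_0)+\mathcal{E}(\rho_0)=0$. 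Equivalently: $\nabla h(\tau)\rightharpoonup\nabla h_0$ weakly in $L^2$, and the hypothesis forces $\limsup_{\tau\to0}\|\nabla h(\tau)\|_2\le\|\nabla h_0\|_2$, whence strong convergence. You do list ``weak continuity and the uniform $H^{-1}$ energy bound'' among your ingredients, but you never run this argument, and it is not a routine supplement: weak continuity alone yields only the lower-semicontinuity inequality $\liminf\|\nabla h(\tau)\|_2\ge\|\nabla h_0\|_2$, i.e.\ the wrong direction. As written, your transfer of $w(0)=0$ rests on an estimate that fails; replacing it with the energy-expansion argument above repairs the proof and recovers the paper's.
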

The proof of this proposition, though important, is tedious, and we attach it in Appendix \ref{app:missingpf}. In fact, we do not have good enough a priori $L^p$ estimates, so the usual hyper-contractivity method for Keller-Segel equations (for instance, \cite{liuwang2016,corrias2016}) will not work. What we use is an energy method appeared in \cite{serfaty2018mean}.

Recall that the energy equality \eqref{energy:est1} tells us that
\begin{gather}\label{eq:smoothtrace}
\sup_{N,\e}\int_0^T\iint_{(\bbR^3)^2} J_{\e}(x_1-x_2)\mu_{N,\e}^{(2)}(x_1, x_2)\,dx_1 dx_2 dt<\infty.
\end{gather}
Fomally, if we take $\e\to 0$, we would have
\[
\int_0^T \tilde{\rho}^N(0, t)\,dt\le C,
\]
where $\tilde{\rho}^N$ is the density for $X_1^N-X_2^N$. As $N\to\infty$, it is expected that
\[
\int_0^T\int_{\bbR^3}\mu^{(2)}(x, x)\,dx dt\le C_1.
\]
This should be
\[
\mathbb{E}\int_0^T\int_{\bbR^3}\rho^2\,dxdt\le C_1,
\] 
which is desired. 
However, rigorously justifying these limits need some uniform convergence and this seems hard. 
We will keep on working on the weak-strong uniqueness principle.

One may be tempted to send $N\to\infty$ first in
\begin{gather*}
\sup_{N,\e}\int_0^T\iint_{\bbR^3\times\bbR^3} J^{\e}(x_1-x_2)\mu_{N,\e}^{(2)}(x_1, x_2)\,dx_1 dx_2 dt<C.
\end{gather*}
The mollified system has the propagation of chaos, and the limit measure is unique, which is the strong solution $\rho_{\e}$ to the nonlinear Fokker-Planck equation with $F$ being replaced by $F_{\e}$. Though $\rho_{\e}$ has uniform $L^2(0, T; L^2)$ bound, one can show that $\rho_{\e}$ converges to the strong solution of the nonlinear Fokker-Planck equation constructed in Appendix \ref{app:strongsoln}, instead of the limit measure $\rho$. Hence, this does not work.

\begin{remark}
	Note that for $d=2$ case, the assumption $\rho\in L^2(0,T;L^2(\bbR^2))$ is a direct corollary from \eqref{fisher:lp} in Lemma \ref{fisherII} by taking $p=d=2$, and one can check that the proof of Proposition \ref{prop:uniquenessresult} is valid for $d=2$. And for Proposition \ref{asweak1}, the self-consistent martingale problem proved in Section 4 of \cite{liuyang2016} implies the conclusion. Hence combining these two results one obtains the propagation of chaos result for $d=2$ easily.
\end{remark}

\begin{remark}
In fact, in the energy estimate, one also expects the first negative term will give us
\begin{gather}\label{eqrmk:need2}
\int_0^T\int_{\bbR^3}\rho |\nabla h|^2\,dxdt<C.
\end{gather}
If this is true, many proofs can be simplified. For example, we will then have $\nabla\rho\cdot\nabla h=\nabla\sqrt{\rho}(\sqrt{\rho}\nabla h)\in L^1(0, T; L^1)$.
Then using the mild solution form \eqref{mildsolform} and the nonnegativity of $\rho^2$, one finds $\rho\in L^2(0, T; L^2)$. However, \eqref{eqrmk:need2} seems difficult to justify.
\end{remark}

With Theorem \ref{asweak2} and Proposition \ref{prop:uniquenessresult}, we conclude the following.
\begin{theorem}\label{thm:propagationchaos}
For $d=3$, let $\{(X_t^{i,N})\}_{i=1}^N$ be the unique solution to \eqref{model} with i.i.d. initial data $\{(X_0^{i,N})\}_{i=1}^N$. Suppose the common initial
density $\rho_0\in L^1(\bbR^3)\cap H^{m}(\bbR^3)$ for $m>3/2$, with $m_2(\rho_0)<\infty$, $H_1(\rho_0)<\infty$. Suppose any limit point of the empirical measure $\mu^N$ defined in \eqref{eq:empirical}
satisfies
\[
\mathbb{E}\int_0^T\int_{\bbR^3}\rho^2\,dxdt<\infty.
\]
Then, $\mu^N$ goes in probability to a deterministic measure $\bar{\mu}:=(\rho_t\,dx)_{t\in[0,T]}$ in $\mathbf{P}(C([0,T];\bbR^3))$ as $N\to\infty$, where $\rho_t$ is the unique strong solution to \eqref{eq:pnp} with initial value $\rho_0$.
\end{theorem}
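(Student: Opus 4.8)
The plan is to assemble the three principal ingredients already at our disposal---weak existence (Theorem \ref{asweak2}), the energy bound (Proposition \ref{pro:uniformboundenergy}), and weak--strong uniqueness (Proposition \ref{prop:uniquenessresult})---to show that \emph{every} subsequential limit in law of $\mu^N$ is the Dirac mass at a single deterministic measure $\bar\mu=(\rho_t\,dx)_{t\in[0,T]}$, where $\rho_t$ is the unique strong solution furnished by Appendix \ref{app:strongsoln}. Once this rigidity is known, tightness upgrades subsequential convergence to full-sequence convergence in law, and convergence in law to a deterministic limit is converted into convergence in probability. Note first that the initial-data hypotheses of the theorem ($\rho_0\in L^1\cap H^m$ with $m>3/2$, $m_2(\rho_0)<\infty$, $H_1(\rho_0)<\infty$) imply, by interpolation, $\rho_0\in L^{6/5}=L^{2d/(d+2)}$ and hence $\mathcal{E}(\rho_0)<\infty$, so the standing assumptions of Proposition \ref{asweak1} and Theorem \ref{asweak2} are met.

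First I would fix an arbitrary limit point $\mu$ of $\mu^N$, which exists by the tightness in Proposition \ref{tightI}(ii) and Prokhorov's theorem, exactly as in Lemma \ref{lmm:convergesinlaw}. The goal is to verify, on a single event of full probability, all the hypotheses of Proposition \ref{prop:uniquenessresult}. Concretely, I would intersect the following almost-sure events: (a) $\mu$ has a density $\rho$ which is a weak solution of \eqref{eq:pnp} in the sense of Definition \ref{weaksol:def}, from Theorem \ref{asweak2}; (b) $\int_0^T I(\mu_t)\,dt<\infty$, from \eqref{fisher:est2} in Proposition \ref{tightII}(iv) together with finiteness of the expectation; (c) the energy bound $\sup_{t}\iint g(x-y)\mu_t(dx)\mu_t(dy)\le\mathcal{E}(\rho_0)$, from Proposition \ref{pro:uniformboundenergy}; and (d) $\rho\in L^2_{\loc}((0,T);L^2(\bbR^3))$. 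For (d) I would invoke the standing hypothesis $\mathbb{E}\int_0^T\int_{\bbR^3}\rho^2\,dx\,dt<\infty$: since the integrand is nonnegative, finiteness of its expectation forces $\int_0^T\int\rho^2\,dx\,dt<\infty$ for almost every $\omega$, hence $\rho\in L^2((0,T);L^2(\bbR^3))\subset L^2_{\loc}$ a.s. A finite intersection of full-measure events is again of full measure, so on that event Proposition \ref{prop:uniquenessresult} applies (its remaining hypothesis $\rho_0\in H^m$, $m>3/2$, is part of the theorem's assumptions), and $\rho_t(\omega)$ coincides with the unique strong solution.

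The decisive observation is then that the unique strong solution does not depend on $\omega$: it is the deterministic $\rho_t$ of Appendix \ref{app:strongsoln}. Hence $\rho_t(\omega)=\rho_t$ for almost every $\omega$, so $\mu(\omega)=\bar\mu$ a.s.; that is, the random measure $\mu$ is in fact constant, and $\mathcal{L}(\mu)=\delta_{\bar\mu}$. Since this holds for \emph{every} subsequential limit, all subsequential limits in law agree. Because $\{\mathcal{L}(\mu^N)\}$ is tight (Proposition \ref{tightI}(ii)), hence relatively compact in the Polish space $\bfP(\bfP(C([0,T];\bbR^3)))$, a tight sequence all of whose convergent subsequences share the limit $\delta_{\bar\mu}$ must itself converge to $\delta_{\bar\mu}$; thus $\mu^N\to\bar\mu$ in law. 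Finally, as noted after the definition of chaos, convergence in law to the \emph{constant} measure $\bar\mu$ in the Polish space $\bfP(C([0,T];\bbR^3))$ (metrized by $d_0$ inducing weak convergence) is equivalent to convergence in probability, which is the asserted propagation of chaos.

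The heavy analytic lifting having already been carried out in the cited results, the main difficulty in this final step is one of hypothesis-transfer and bookkeeping: making sure the various almost-sure statements can be intersected into one full-measure event on which $\rho$ is simultaneously a weak solution, of finite Fisher information, energy-bounded, and $L^2$-regular, so that the deterministic uniqueness of Proposition \ref{prop:uniquenessresult} forces rigidity of \emph{all} limit points at once. The genuine gap---emphasized by the authors---is that the hypothesis $\mathbb{E}\int_0^T\int\rho^2\,dx\,dt<\infty$ is imposed rather than proved; physically it is the dissipation estimate suggested by \eqref{eq:smoothtrace}, but its rigorous justification (interchanging the limits $\e\to0$ and $N\to\infty$ in the regularized energy identity) is precisely the obstruction preventing an unconditional statement.
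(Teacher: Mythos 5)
Your proposal is correct and follows essentially the same route as the paper's proof: tightness of $\{\mathcal{L}(\mu^N)\}$ plus Prokhorov, Theorem \ref{asweak2} to identify each subsequential limit as an a.s.\ weak solution, the hypothesis $\mathbb{E}\int_0^T\int_{\bbR^3}\rho^2\,dx\,dt<\infty$ together with Proposition \ref{prop:uniquenessresult} (whose remaining hypotheses you rightly supply from \eqref{fisher:est2} and Proposition \ref{pro:uniformboundenergy}) to force $\mu=\bar\mu$ a.s., and finally the standard upgrade from convergence in law to a deterministic limit to convergence in probability. Your version is in fact slightly more careful than the paper's, which leaves the verification of the Fisher-information and energy hypotheses of Proposition \ref{prop:uniquenessresult} implicit and argues via sub-subsequences converging in probability rather than full-sequence convergence in law, but these are cosmetic differences.
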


\begin{proof}
We consider the metric $d_0$ on $\bfP(C([0,T],\bbR^3))$ induced by weak convergence. From Proposition \ref{tightI}, we know that $\mathcal{L}(\mu^N)$ is tight in $\mathbf{P}(\mathbf{P}(C([0,T];\bbR^3)))$. Therefore for any subsequence of $\mu^N$, there exists a further subsequence $\{\mu^{N_k}\}$ converging in law to some random measure $\mu$: $(\Omega,\mathcal{F},\bbP)\to C([0,T];\bbR^3)$. Then by Theorem \ref{asweak2}, for a.s. $\omega\in\Omega$, the limiting point $\mu$ has a density $\rho$, which is a weak solution to \eqref{eq:pnp}. By the assumption and Proposition \ref{prop:uniquenessresult}, the weak solution to \eqref{eq:pnp} is unique. Therefore if we denote $\bar{\mu}$ the (deterministic) random measure with density $\bar{\rho}$, which is the strong solution to \eqref{eq:pnp}, then for any $0\le t\le T$ and $\omega\in\Omega$,  $\mu=\bar{\mu}$ for a.s. $\omega\in\Omega$. Since the subsequence $\{\mu^{N_k}\}$ converge in law to $\mu$ and $\mu$ is a.s. equal to the deterministic probability measure $\bar{\mu}$, we see that $\mu^{N_k}$ converge in probability to $\bar{\mu}$. In other words, any subsequence of $\{\mu^N\}$ has a further subsequence $\{\mu^{N_k}\}$ converging in probability to $\bar{\mu}$. Hence, $\{\mu^N\}$ converges in probability to the deterministic probability measure $\bar{\mu}$ in $\mathbf{P}(C([0,T];\bbR^3))$.
\end{proof}

\section*{Acknowledgement}
The work of L. Li was partially sponsored by NSFC 11901389, 11971314, and Shanghai Sailing Program 19YF1421300. The work of J.-G. Liu was partially supported by KI-Net NSF RNMS11-07444 and NSF DMS-1812573. P. Yu was partially supported by the elite undergraduate training program of School of Mathematical Sciences in Peking University. 

\appendix
\section{Notes on the nonlinear Fokker-Planck equation}\label{app:strongsoln}
In this part, we investigate some properties of the nonlinear Fokker-Planck equation \eqref{eq:pnp}. We will show the local existence and uniqueness of strong solution for \eqref{eq:pnp} given the initial data is small in some space $H^s$, and then we will discuss some potential methods for the uniqueness of the weak solution.

First we state a useful lemma in \cite{biler2010}, which is some type of Banach fixed point theorem.
\begin{lemma}\label{biler}
Let (X, $\|\cdot\|_X$) be a Banach space and $H: X \times X \to X$ a bounded
bilinear form satisfying $\|H(x_1, x_2)\|_X \le \eta\|x_1\|_X \|x_2\|_X$ for all $x_1, x_2 \in X$ and a
constant $\eta > 0$. Then, if $0 <\e< \frac{1}{4\eta}$ and if $f\in X$ is such that $\|f\|_X < \e$, the
equation $x = f + H(x, x)$ has a solution in X such that $\|x\|_X \le 2\e$. This solution is
unique in the ball $\bar{B}(0,2\e)$.
\end{lemma}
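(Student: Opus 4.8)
The plan is to solve the fixed-point equation $x = f + H(x,x)$ by the contraction mapping principle applied to the map $T(x) := f + H(x,x)$ on the closed ball $\bar{B}(0,2\e)\subset X$. Since $X$ is a Banach space, $\bar{B}(0,2\e)$ is a complete metric space under the metric induced by $\|\cdot\|_X$, so it suffices to verify that $T$ maps this ball into itself and is a strict contraction there; the Banach fixed-point theorem then delivers the existence and in-ball uniqueness simultaneously.

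For the self-mapping property I would use the hypothesis $\|f\|_X<\e$ together with the bilinear bound directly: for $\|x\|_X\le 2\e$,
\[
\|T(x)\|_X \le \|f\|_X + \|H(x,x)\|_X \le \e + \eta\|x\|_X^2 \le \e + 4\eta\e^2 = \e(1+4\eta\e).
\]
Because $\e<\frac{1}{4\eta}$ gives $4\eta\e<1$, the right-hand side is at most $2\e$, so $T(\bar{B}(0,2\e))\subset \bar{B}(0,2\e)$. For the contraction estimate, the key algebraic step is to exploit bilinearity to telescope the difference,
\[
H(x_1,x_1)-H(x_2,x_2)=H(x_1,x_1-x_2)+H(x_1-x_2,x_2),
\]
whence for $x_1,x_2\in\bar{B}(0,2\e)$,
\[
\|T(x_1)-T(x_2)\|_X \le \eta\big(\|x_1\|_X+\|x_2\|_X\big)\|x_1-x_2\|_X \le 4\eta\e\,\|x_1-x_2\|_X,
\]
so $T$ is Lipschitz with constant $4\eta\e<1$.

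With both properties established, the contraction mapping principle produces a unique fixed point $x\in\bar{B}(0,2\e)$, which is precisely the asserted solution with $\|x\|_X\le 2\e$, unique in that ball. There is no genuine obstacle here: the only point demanding attention is that the single smallness condition $\e<\frac{1}{4\eta}$ must force both the self-mapping bound $\e(1+4\eta\e)\le 2\e$ and the contraction factor $4\eta\e<1$, and one checks that the former is implied by the latter. It is the bilinearity of $H$ that makes the telescoping identity—and hence the Lipschitz bound—available; without it the difference $H(x_1,x_1)-H(x_2,x_2)$ could not be controlled linearly in $\|x_1-x_2\|_X$.
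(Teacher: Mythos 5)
Your proof is correct: the self-mapping bound $\e(1+4\eta\e)<2\e$ and the telescoping identity $H(x_1,x_1)-H(x_2,x_2)=H(x_1,x_1-x_2)+H(x_1-x_2,x_2)$ giving Lipschitz constant $4\eta\e<1$ are exactly what is needed to apply the Banach fixed-point theorem on the complete set $\bar{B}(0,2\e)$, and both existence and in-ball uniqueness follow at once. The paper itself states this lemma without proof, citing \cite{biler2010}, and your contraction argument is the standard one found there, so there is nothing to add.
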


In light of Duhamel's principle, we define the mild solution of \eqref{eq:pnp} in the following sense:
\begin{definition}
Let $X$ be a Banach space over space and time. We call $\rho\in X$ a mild solution to \eqref{eq:pnp} with initial data $\rho_0$ if $\rho$ satisfies the following equation in $X$:
\begin{equation}\label{eq:mildsol}
    \rho(x,t)=e^{t\Delta}\rho_0+\int_0^te^{(t-s)\Delta}\nabla\cdot\Big(\rho(s,\cdot)\nabla(g*\rho(s,\cdot))\Big)ds.
\end{equation}
\end{definition}

Now we have the following local existence and uniqueness of mild solution:
\begin{proposition}\label{mild:loc}
Suppose $m>\frac{d}{2}$, $d\ge 3$, and the initial data $\rho_0\in L^1(\bbR^d)\cap H^m(\bbR^d)$. Then there exists a $T>0$ such that equation \eqref{eq:pnp} admits a unique mild solution $\rho$ in $C([0,T];L^1(\bbR^d))\cap C([0,T];H^m(\bbR^d))$. If we define $T_b$ to be the largest time of existence, i.e.,
\[
T_b=\sup\{T>0| \text{ \eqref{eq:pnp} has a mild solution in } C([0,T];L^1(\bbR^d))\cap C([0,T];H^m(\bbR^d))\},
\]
then $T_b<\infty$ implies that $\limsup_{t\to T_b^{-}}(\|\rho_t\|_{H^m}+\|\rho_t\|_{L^1})=+\infty$.
Moreover, the integral of the mild solution is preserved, i.e.,
\begin{equation}
	\int_{\bbR^d}\rho(x,t)dx=\int_{\bbR^{d}}\rho_0(x)dx.
\end{equation}
\end{proposition}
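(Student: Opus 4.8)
The plan is to solve the mild formulation \eqref{eq:mildsol} by the bilinear fixed point Lemma \ref{biler}. First I would fix, for $T>0$ to be chosen, the Banach space
\[
X = X_T := C([0,T];L^1(\bbR^d)) \cap C([0,T];H^m(\bbR^d)),
\]
with norm $\|\rho\|_{X} = \sup_{0\le t\le T}\big(\|\rho(t)\|_{L^1} + \|\rho(t)\|_{H^m}\big)$, and rewrite \eqref{eq:mildsol} as $\rho = f + H(\rho,\rho)$, where $f(t) = e^{t\Delta}\rho_0$ and
\[
H(\rho_1,\rho_2)(t) = \int_0^t e^{(t-s)\Delta}\nabla\cdot\big(\rho_1(s)\,\nabla(g*\rho_2(s))\big)\,ds
\]
is bounded and bilinear. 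Since the heat semigroup is a contraction on both $L^1$ and $H^m$, one has $\|f\|_{X}\le \|\rho_0\|_{L^1}+\|\rho_0\|_{H^m}=:C_0$, a constant independent of $T$.

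The heart of the matter is the bilinear estimate $\|H(\rho_1,\rho_2)\|_{X}\le \eta(T)\|\rho_1\|_{X}\|\rho_2\|_{X}$ with $\eta(T)\to 0$ as $T\to 0$. For the $H^m$ part I would use the smoothing bound $\|e^{\tau\Delta}\nabla\cdot F\|_{H^m}\lesssim \tau^{-1/2}\|F\|_{H^m}$, the Banach algebra property of $H^m$ for $m>d/2$ giving $\|\rho_1\nabla(g*\rho_2)\|_{H^m}\lesssim \|\rho_1\|_{H^m}\|\nabla(g*\rho_2)\|_{H^m}$, together with the key auxiliary estimate $\|\nabla(g*\rho)\|_{H^m}\lesssim \|\rho\|_{L^1}+\|\rho\|_{H^m}$. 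This last estimate is where the two norms must cooperate: since $\nabla(g*\rho)=\nabla(-\Delta)^{-1}\rho$, its derivatives of order $\ge 1$ are controlled in $L^2$ by Riesz transforms (bounded on $H^{m-1}$), while the zeroth-order, low-frequency piece is handled by Hardy--Littlewood--Sobolev ($\nabla g\sim|x|^{-(d-1)}$) and the embedding $L^1\cap H^m\hookrightarrow L^p$. The $L^1$ part of $H$ is parallel, using $\|e^{\tau\Delta}\nabla\cdot F\|_{L^1}\lesssim \tau^{-1/2}\|F\|_{L^1}$ and $\|\rho_1\nabla(g*\rho_2)\|_{L^1}\le\|\rho_1\|_{L^2}\|\nabla(g*\rho_2)\|_{L^2}$. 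In both cases the time integral contributes $\int_0^t(t-s)^{-1/2}\,ds\le 2\sqrt{T}$, so $\eta(T)=C\sqrt{T}$, and the condition $C_0<1/(4\eta(T))$ of Lemma \ref{biler} holds once $T<(16C^2C_0^2)^{-1}$, producing a mild solution with $\|\rho\|_{X}\le 2C_0$; continuity in time of the fixed point follows by dominated convergence against the integrable kernel $(t-s)^{-1/2}$. I expect precisely this bilinear estimate --- and within it the low-frequency control of the nonlocal factor $\nabla(g*\rho)$, which is what forces the $L^1$ norm into the solution space --- to be the main obstacle.

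For uniqueness in all of $X_T$ (not merely the ball supplied by Lemma \ref{biler}), I would run a Gronwall argument on the difference of two solutions, or equivalently cover $[0,T]$ by short subintervals on which both solutions lie in a common ball. The blow-up criterion is then standard: the local existence length depends on the data only through $\|\rho_0\|_{X}$, so if $T_b<\infty$ while $\limsup_{t\to T_b^-}(\|\rho_t\|_{H^m}+\|\rho_t\|_{L^1})<\infty$, the norm stays below some $M$ on $[0,T_b)$, yielding a uniform existence length $\delta=\delta(M)>0$; restarting from $t=T_b-\delta/2$ extends the solution past $T_b$, contradicting maximality. Finally, conservation of mass follows by integrating \eqref{eq:mildsol}: the heat semigroup preserves the spatial integral, and $\int_{\bbR^d} e^{\tau\Delta}\nabla\cdot\big(\rho\nabla(g*\rho)\big)\,dx=0$ because the integrand is the divergence of an $L^1$ vector field (the bilinear estimate already gives $\rho\nabla(g*\rho)\in L^1$), whence $\int_{\bbR^d}\rho(t)\,dx=\int_{\bbR^d}\rho_0\,dx$ for every $t$.
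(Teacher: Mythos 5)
Your proposal is correct and follows essentially the same route as the paper: the bilinear fixed point lemma on $X=C([0,T];L^1)\cap C([0,T];H^m)$, the $H^m$ algebra property combined with low-frequency control of $\nabla(g*\rho)$ by the $L^1$ norm, the $(t-s)^{-1/2}$ heat smoothing giving $\eta(T)=C\sqrt{T}$, uniqueness by continuation, blow-up by restarting from a uniformly bounded state, and mass conservation from the divergence structure. The only cosmetic deviation is that you bound the low frequencies of $\nabla(g*\rho)$ via Hardy--Littlewood--Sobolev and Riesz transforms, whereas the paper does it directly on the Fourier side using $\|\hat{\rho}\|_\infty\le\|\rho\|_{L^1}$; both yield the same key estimate $\|\nabla(g*\rho)\|_{H^m}\lesssim\|\rho\|_{L^1}+\|\rho\|_{H^m}$.
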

\begin{proof}
We will apply Lemma \ref{biler} to prove this result. We set
\[
X:=C([0,T];L^1(\bbR^d))\cap C([0,T];H^m(\bbR^d))
\]
with norm $\|u\|_X:=\|u\|_{C([0,T];L^1)}+\|u\|_{C([0,T];H^m)}$ and define the bilinear form $H$ on $X\times X$ by
\[
H(u,v)=\int_0^te^{(t-s)\Delta}(\nabla\cdot(u(s,\cdot)\nabla(g*v(s,\cdot)))\,ds.
\]
We also denote
\[
\|f\|:=\|f\|_{L^1}+\|f\|_{H^m}
\]
 for $f\in L^1(\bbR^d)\cap H^m(\bbR^d)$.

First, since $H^m$ is an algebra as long as $m>\frac{d}{2}$, for $f_1,f_2\in L^1(\bbR^d))\cap H^m(\bbR^d)$, we have
\begin{equation}\label{hm:algebra1}
    \|f_1\nabla(g*f_2)\|_{H^m}\le C_m\|f_1\|_{H^m}\|\nabla(g*f_2)\|_{H^m}.
\end{equation}
Note that
\begin{equation}\label{hm:algebra2}
    \begin{split}
        \|\nabla(g*f_2)\|_{H^m}^2&=\int_{\bbR^d}|\widehat{\nabla(g*f_2)}(\xi)|^2(1+|\xi|^2)^md\xi\\
        &=\int_{|\xi|\le 1}\frac{|\hat{f}_2|^2(1+|\xi|^2)^m}{|\xi|^2}d\xi+\int_{|\xi|>1}\frac{|\hat{f}_2|^2(1+|\xi|^2)^m}{|\xi|^2}d\xi\\
        &\le C_{m,d}\|\hat{f_2}\|_\infty^2+\|f_2\|_{H^m}^2.
    \end{split}
\end{equation}
Combining \eqref{hm:algebra1}, \eqref{hm:algebra2} and the fact that $\|\hat{f}_2\|_\infty\le\|f_2\|_{L^1}$, one finds that
\begin{equation}\label{hm:algebra3}
\|f_1\nabla(g*f_2)\|_{H^m}\le C_{m,d}\|f_1\|\|f_2\|.
\end{equation}
For $0\le\alpha<1$, one also has the following for $f\in H^m$:
\begin{equation}\label{hm:algebra8}
\begin{split}
\|e^{t\Delta}\nabla f\|_{H^{m+\alpha}}^2 &=\int_{\mathbb{R}^d} |\hat{f}(\xi)|^2(1+|\xi|^2)^{m+\alpha} |\xi|^2 e^{-2|\xi|^2t}d\xi\\
&\le \|f\|_{H^m}^2\sup_{\xi\in\bbR^d}(|\xi|^2(1+|\xi|^2)^\alpha e^{-2|\xi|^2t})\\
&\le C\|f\|_{H^m}^2 (t^{-1}+t^{-1-\alpha}).
\end{split}
\end{equation}

Hence, for $u, v\in X$, one has that
\begin{equation}\label{hm:algebra9}
\sup_{t\in [0, T]}\|H(u,v)\|_{H^{m+\alpha}}\le C(T^{1/2}+T^{(1-\alpha)/2})\|u\|_X\|v\|_X.
\end{equation}

The heat kernel $P(x,t)=\frac{1}{(4\pi t)^\frac{d}{2}}e^{-\frac{|x|^2}{4t}}$ satisfies $\|\nabla P(\cdot, t)\|_{L^1}=\alpha_dt^{-\frac{1}{2}}$, where $\alpha_d$ is a constant. Note that $e^{(t-s)\Delta }\nabla\cdot (u\nabla(g*v))=\int_{\bbR^d} \nabla P(x-y, t-s)\cdot u\nabla(g*v)(y,s)\,dy$. One thus has:
\begin{equation}\label{hm:algebra10}
\begin{split}
\sup_{t\in [0, T]}\|H(u,v)\|_{L^1}&\le C\int_0^t (t-s)^{-\frac{1}{2}} \|u\nabla (g*v)\|_1(s)\,ds,\\
& \le C\|u\|_X\|v\|_X\int_0^T(T-s)^{-1/2}\,ds.
\end{split}
\end{equation}
Note that we used $\|u(s)\nabla (g*v)(s)\|_1\le \|u(s)\|_{2}\|\nabla g*v(s)\|_2
\le C\|u(s)\|_{H^m}\|v(s)\|$ by setting $m=0$ in \eqref{hm:algebra2}.

We now check that $H(u,v)\in X$. By \eqref{hm:algebra9} and \eqref{hm:algebra10}, it is easy to verify that $H(u,v)$ is continuous at $t=0$ in $H^{m+\alpha}$ and $L^1$ norm. We now fix $t>0$.  Pick $\delta_1\in (0, t)$ and set $w=u\nabla(g*v)$. We calculate for $|\delta|$ small enough (note that $\delta$ can be negative) that
\begin{multline}\label{huv:cont1}
\|H(u,v)(t+\delta)-H(u,v)(t)\|_{H^{m+\alpha}}\le\int_0^{t-\delta_1}\|(e^{(t+\delta-s)\Delta}-e^{(t-s)\Delta})\nabla\cdot w\|_{H^{m+\alpha}}ds\\
 +\int_{t-\delta_1}^t\|e^{(t-s)\Delta}\nabla\cdot w(s)\|_{H^{m+\alpha}}ds+\int_{t-\delta_1}^{t+\delta}\|e^{(t+\delta-s)\Delta}\nabla\cdot w(s)\|_{H^{m+\alpha}}ds.
\end{multline}
Using \eqref{hm:algebra8}, the last two terms of \eqref{huv:cont1} are bounded by $C(|\delta|+\delta_1)^{\min(\frac{1}{2},\frac{1-\alpha}{2})}\|u\|_X\|v\|_X$. The first term of \eqref{huv:cont1} is similarly estimated as in \eqref{hm:algebra8}:
\begin{equation}\label{huv:cont2}
\begin{split}
&\int_0^{t-\delta_1}\|(e^{(t+\delta-s)\Delta}-e^{(t-s)\Delta})\nabla\cdot w(s)\|_{H^{m+\alpha}}ds\\
&\le C\|u\|_X\|v\|_X\int_0^{t-\delta_1}\|(1+|\xi|^{2\alpha})(e^{-|\xi|^2(t+\delta-s)}-e^{-|\xi|^2(t-s)})\cdot\xi \|_\infty^{1/2} ds \\
&\le C\|u\|_X\|v\|_X\int_{\delta_1}^t\|\xi e^{-|\xi|^2(s-\delta_1/2)}(1+|\xi|^{2\alpha})(e^{-\delta_1|\xi|^2/2}-e^{-|\xi|^2(\delta_1/2+\delta)})\|_\infty^{1/2} ds.
\end{split}
\end{equation}
By discussing the domains for $|\xi|\ge L$ and $|\xi|\le L$, one can easily find that as $\delta\to 0$, the $\|\cdot\|_{\infty}$ norm goes to zero.  Hence, $H(u, v)$ is continuous at $t$ under $H^{m+\alpha}$ norm. So we have actually verified that $H(u,v)\in C([0,T];H^{m+\alpha}(\bbR^d))$ where $0\le\alpha<1$.

Similar to \eqref{huv:cont1}, we have
\begin{equation}\label{huv:cont3}
\begin{split}
&\|H(u,v)(t+\delta)-H(u,v)(t)\|_{L^1}\le\int_0^{t-\delta_1}\|(e^{(t+\delta-s)\Delta}-e^{(t-s)\Delta})\nabla\cdot w(s)\|_{L^1}ds\\
& +\int_{t-\delta_1}^t\|e^{(t-s)\Delta}\nabla\cdot w(s)\|_{L^1}ds+\int_{t-\delta_1}^{t+\delta}\|e^{(t+\delta-s)\Delta}\nabla\cdot w(s)\|_{L^1}ds.
\end{split}
\end{equation}
Similarly as in \eqref{hm:algebra10}, the last two terms of \eqref{huv:cont3} are controlled by $4C_d\sqrt{|\delta|+\delta_1}\|u\|_X\|v\|_X$. For the first term, we similarly write
\begin{equation}\label{huv:cont4}
\begin{split}
&\int_0^{t-\delta_1}\|(e^{(t+\delta-s)\Delta}-e^{(t-s)\Delta})\nabla\cdot w(s)\|_{L^1}ds\\
&\le C\|u\|_X\|v\|_X \int_0^{t-\delta_1}\|\nabla P(\cdot, t+\delta-s)-\nabla P(\cdot, t-s) \|_{L^1}.
\end{split}
\end{equation}
Since $\nabla P\in C([\delta_1, T], L^1(\mathbb{R}^d))$ and thus uniformly continuous in time on $[\delta_1, T]$. This term goes to zero as $\delta\to 0$. Hence, $H(u, v)\in C([0,T];L^1(\bbR^d))$. We thus have $H(u, v)\in X$
with
\begin{equation}\label{huv:norm}
\|H(u,v)\|_X\le C_{d,m}\sqrt{T}\|u\|_X\|v\|_X.
\end{equation}

Now we apply Lemma \ref{biler} by taking $f= e^{t\Delta}\rho_0$. Since
$$\|e^{t\Delta}\rho_0\|_{L^1}= \|P(\cdot, t)*\rho_0\|_{L^1}\le  \|\rho_0\|_{L^1}$$
$$\|e^{t\Delta}\rho_0\|_{H^m}=\|e^{-|\xi|^2t}(1+|\xi|^2)^{\frac{m}{2}}\hat{\rho}_0\|_{L^2}$$
we find that $\|f\|_X\le \|\rho_0\|_{L^1}+\|\rho_0\|_{H^m}=\|\rho_0\|$. Therefore by Lemma \ref{biler}, equation \eqref{eq:pnp} admits a unique mild solution $\rho\in C([0,T];L^1(\bbR^d))\cap C([0,T];H^m(\bbR^d))$ where $T=\frac{1}{16 C_{d,m}^2}\|f\|_X^{-2}$. Moreover, $\|\rho\|_X\le 2\|f\|_X\le 2\|\rho_0\|$.

Moreover, we claim that the mild solution is also unique on $[0, T_b)$, not just on $[0, T]$. In fact, for two mild solutions $\rho_i(t), i=1,2$. Define $I=\{t: \rho_1(s)=\rho_2(s), \text{for all }s\le [0, t) \}$. Clearly, $I$ is an interval and $[0, T] \subset I$. By viewing $\rho_1(t), t\in I$ as the new initial data and applying Lemma \ref{biler} again, we find that $\rho$ is unique on some interval $[t, t+\epsilon(t)]$ with $\epsilon(t)>0$. Hence, $I$ is an open subinterval of $[0, T_b)$ with the topology inherited from $\mathbb{R}$. Moreover, by the continuity of $\rho_i(t)$, $I$ is also closed. Hence, $I=[0, T_b)$.

If the blow-up criterion does not hold, there exists $M>0$ such that $\sup_{t\in [0, T_b)}\|\rho(t)\|_X\le M$. Set $t_1:=\frac{1}{16 C_{d,m}^2}M^{-2}$. The equation \eqref{eq:pnp} with initial data $\rho(T_b-t_1/2)$ has a mild solution $\tilde{\rho}$ in $C([0,t_1];L^1(\bbR^d))\cap C([0,t_1];H^m(\bbR^d))$. If we define $\bar{\rho}(t)=\rho(t)$ for $0\le t\le T_b-t_1/2$ and $\bar{\rho}(t)=\tilde{\rho}(t-(T_b-t_1/2))$ for $t\in [T_b-t_1/2, T_b+t_1/2]$, then $\bar{\rho}$ is a mild solution on $[0, T_b+t_1/2]$, which contradicts with the definition of $T_b$.

Lastly, we have
\begin{equation}
	\int_{\bbR^d}\rho(x,t)dx=\int_{\bbR^d}\left(e^{t\Delta}\rho_0(x)+\int_0^te^{(t-s)\Delta}\nabla\cdot(\rho(s,\cdot)\nabla(g*\rho(s,\cdot))ds)\right)dx.
\end{equation}
Since we have shown in \eqref{hm:algebra10} that the right side is in $L^1$, we can freely change the order of the integral and the integral preservation follows.
\end{proof}

We now show that the mild solution is a strong solution. We say $\rho\in C([0, T]; L^1(\bbR^d))\cap C([0, T]; H^m(\bbR^d))$ is a strong solution if: (i) $\rho$ is a weak solution that satisfies the equation in the distributional sense; (ii) both $\partial_t\rho$ and $\nabla\cdot(\rho\nabla(g*\rho))+\Delta\rho$ are locally integrable functions on $(0, T)\times \mathbb{R}^d$ so that the equation holds a.e.
\begin{proposition}\label{pro:strongunique}
Let $\rho_0\in L^1(\bbR^d)\cap H^m(\bbR^d)$ with $m>\frac{d}{2}$. Then the mild solution $\rho$ is a strong solution belonging to $C^\infty((0,T_b),H^{m'}(\bbR^d))$ for any $m'\ge m$. Moreover, the strong solution is unique.
\end{proposition}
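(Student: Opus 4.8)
The plan is to take the mild solution produced in Proposition \ref{mild:loc} and upgrade it by a standard parabolic bootstrap exploiting the smoothing of the heat semigroup, then read the PDE off classically, and finally deduce uniqueness of strong solutions from the uniqueness of mild solutions already established. First I would fix $0<t_0<T<T_b$ and use the semigroup property to restart the mild identity \eqref{eq:mildsol} at $t_0$:
\[
\rho(t)=e^{(t-t_0)\Delta}\rho(t_0)+\int_{t_0}^t e^{(t-s)\Delta}\nabla\cdot\big(\rho(s)\nabla(g*\rho(s))\big)\,ds,\qquad t\in[t_0,T].
\]
The first term is smoothing: since $\rho(t_0)\in H^m$ and $e^{-|\xi|^2(t-t_0)}$ decays super-polynomially, $e^{(t-t_0)\Delta}\rho(t_0)\in C([t_1,T];H^s)$ for every $s$ as soon as $t_1>t_0$. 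For the forcing term I would set $w(s):=\rho(s)\nabla(g*\rho(s))$ and note, via the algebra property of $H^{\sigma}$ together with \eqref{hm:algebra3}, that $w\in C([t_0,T];H^{\sigma})$ whenever $\rho\in C([t_0,T];H^{\sigma})$; the estimate behind \eqref{hm:algebra8} then gives $\|e^{\tau\Delta}\nabla\cdot w\|_{H^{\sigma+\alpha}}\le C(\tau^{-1/2}+\tau^{-(1+\alpha)/2})\|w\|_{H^{\sigma}}$ for $0\le\alpha<1$, whose singularity is time-integrable. Hence the Duhamel integral maps $C([t_0,T];H^{\sigma})$ into $C([t_0,T];H^{\sigma+\alpha})$.

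Combining the two contributions, I would iterate: starting from the known regularity $\rho\in C([t_0,T];H^m)$, each step gains $\alpha$ spatial derivatives while advancing the base time. Choosing an increasing sequence $t_0<t_1<t_2<\cdots$ with $t_k\uparrow t^\ast<T$, induction yields $\rho\in C([t_k,T];H^{m+k\alpha})$, so after finitely many steps $\rho\in C([t^\ast,T];H^s)$ for any prescribed $s\ge m$. Since $t_0,t^\ast,T$ are arbitrary in $(0,T_b)$, this gives spatial smoothness $\rho\in C((0,T_b);H^s)$ for all $s\ge m$.

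Once $\rho(s)$ lies in every $H^{\sigma}$ locally uniformly for $t>0$, I would upgrade to time regularity. The forcing $w$ is then locally H\"older in time with values in each $H^{\sigma}$, and the regularity theory for the analytic heat semigroup makes the restarted Duhamel integral classically differentiable in $t$, so $\rho$ solves $\partial_t\rho=\Delta\rho+\nabla\cdot(\rho\nabla(g*\rho))$ pointwise on $(0,T_b)$. Differentiating this identity in time and bootstrapping (the right-hand side lies in every $C((0,T_b);H^{\sigma})$) then yields $\rho\in C^{\infty}((0,T_b);H^{m'})$ for all $m'\ge m$. In particular $\partial_t\rho$ and $\Delta\rho+\nabla\cdot(\rho\nabla(g*\rho))$ are locally integrable on $(0,T)\times\bbR^d$, and the initial value is attained in $H^m$ because $\rho\in C([0,T];H^m)$ by Proposition \ref{mild:loc}; thus the two conditions defining a strong solution hold. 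For uniqueness I would observe that any strong solution in $C([0,T];L^1(\bbR^d))\cap C([0,T];H^m(\bbR^d))$ satisfies the equation a.e.\ with locally integrable right-hand side, so applying $e^{(t-s)\Delta}$ and integrating (variation of constants) shows it coincides with the mild formulation \eqref{eq:mildsol}; the uniqueness of mild solutions on $[0,T_b)$ from Proposition \ref{mild:loc} then forces uniqueness of the strong solution.

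I expect the main obstacle to be the passage from the mild representation to genuine time differentiability, rather than the spatial smoothing, which is routine given \eqref{hm:algebra8}. Concretely, one must verify the local H\"older continuity in time of $w$ in an appropriate $H^{\sigma}$ and invoke the classical-solution theorem for analytic semigroups, while keeping the kernel bound $\tau^{-(1+\alpha)/2}$ integrable; it is exactly this integrability constraint, forcing $\alpha<1$, that dictates the step-by-step bootstrap (advancing the base time at each iteration) instead of a single jump to arbitrary regularity.
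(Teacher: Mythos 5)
Your proposal is correct in outline and coincides with the paper's proof in its two main pillars: the spatial bootstrap (restarting the Duhamel formula \eqref{eq:mildsol} at later and later times and gaining $\alpha<1$ derivatives per step via the smoothing estimate \eqref{hm:algebra8} and the bilinear bound \eqref{hm:algebra3} --- the paper does exactly this, iterating $2(m'-m)+2$ times with $\alpha=\tfrac12$) and the uniqueness argument (a strong solution is a mild solution by variation of constants, and mild solutions are unique on $[0,T_b)$ by Proposition \ref{mild:loc}). The genuine divergence is the time-regularity step. You invoke the classical-solution theorem for analytic semigroups, which requires local H\"older continuity in time of $w=\rho\nabla(g*\rho)$ with values in $H^{\sigma}$ --- a hypothesis you correctly flag as the main obstacle but do not verify. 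The paper avoids this machinery entirely: once $w\in C([t_1,T];H^{m})$ for every $m$, it uses the elementary identity $e^{t\Delta}u-u=\int_0^t\Delta e^{\tau\Delta}u\,d\tau$ (valid for $u\in L^2$) together with an exchange of the order of integration to convert the mild formula into the integrated equation $\bar\rho(t)-\bar\rho_0=\int_0^t\big(\Delta\bar\rho(\tau)+w(\tau)\big)d\tau$, from which $C^1$ in time --- and then, by differentiating the equation itself, $C^\infty$ in time in every $H^{m'}$ --- follows from mere continuity of the integrand; no H\"older modulus is needed. Your route can be closed if desired: from the restarted mild formula and the already-established regularity, $\|(e^{\delta\Delta}-I)\rho(t)\|_{H^{\sigma}}\le \delta\,\|\rho(t)\|_{H^{\sigma+2}}$ while the short-time Duhamel tail is $O(\delta^{1/2})$ by \eqref{hm:algebra8}, so $\rho$, hence $w$ by \eqref{hm:algebra3}, is locally $\tfrac12$-H\"older in each $H^{\sigma}$; but this costs an extra estimate that the paper's fundamental-theorem-of-calculus identity renders unnecessary. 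What your approach buys is generality (it is the standard abstract parabolic scheme); what the paper's buys is self-containedness and, as a by-product, the integrated identity that directly exhibits $\rho$ as a weak (distributional) solution, which is part of the paper's definition of a strong solution and which your write-up passes over slightly quickly --- pointwise satisfaction of the PDE for $t>0$ plus $\rho\in C([0,T];H^m)$ does yield the distributional formulation, but that deduction deserves an explicit line.
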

\begin{proof}
We take $T\in (0, T_b)$. From the proof of previous proposition,  for $0\le\alpha<1$,
\[
H(\rho,\rho)\in C([0,T];H^{m+\alpha}(\bbR^d)).
\]
Meanwhile, since $\rho_0\in H^m(\bbR^d)$, it is easy to verify that
$e^{t\Delta}\rho_0\in C((0,T]; H^{m'})$ for any $m'>0$. Therefore we see that $\rho$ is in $C((0,T];H^{m+\alpha}(\bbR^d))$.
	
Now for any $0<t_1<T$, we take $\alpha=\frac{1}{2}$ with the new initial value $\rho_0^{(1)}=\rho_{\frac{t_1}{2}}$. Then $\rho^{(1)}(t):=\rho(t-\frac{t_1}{2})$ is a mild solution of \eqref{eq:pnp} in $C([0,T-\frac{t_1}{2}];H^{m+\alpha}(\bbR^d))\cap C([0,T-\frac{t_1}{2}];L^1(\bbR^d))$. Therefore the previous argument implies that $\rho^{(1)}\in C((0,T-\frac{t_1}{2}];H^{m+2\alpha}(\bbR^d))$. Then we can take the new initial value $\rho_0^{(2)}=u_{\frac{t_1}{4}}^{(1)}$ along with $\rho^{(2)}(t)=u^{(1)}(t-\frac{t_1}{4})$. Iterating this process for $2(m'-m)+2$ times, we find that $\rho\in C([t_1,T],H^{m'}(\bbR^d))$.

Take $t_1>0$.  Let $\bar{\rho}(t)=\rho(t+t_1)$. Then, $\bar{\rho}$ satisfies
\[
\bar{\rho}(t)=e^{t\Delta}\bar{\rho}(0)+\int_{0}^te^{(t-s)\Delta}\nabla\cdot(\bar{\rho}\nabla(g*\bar{\rho})).
\]
We have $w:=\nabla\cdot(\bar{\rho}\nabla(g*\bar{\rho})) \in C([0,T-t_1];H^{m}(\bbR^d))$ for any $m>0$. It then follows
\begin{equation}
\Delta \bar{\rho}(t)=\Delta e^{t\Delta}\bar{\rho}(0)+ \int_0^te^{(t-s)\Delta}\Delta w_s\,ds.
\end{equation}
By the property for heat equation with $L^2$ initial data, we have $e^{t\Delta }u-u=\int_0^t \Delta e^{\tau\Delta }u \,d\tau$ if $u\in L^2$. Hence,
\begin{equation}
	\begin{split}
	\int_0^{t} (\Delta \bar{\rho}(\tau)+w(\tau))d\tau&=\int_0^{t}\Delta e^{\tau\Delta}\bar{\rho}_0d\tau+ \int_0^{t}\int_0^{\tau}e^{(\tau-s)\Delta}\Delta w_s\,dsd\tau+\int_0^{t}w(\tau)d\tau\\
	&=(e^{t\Delta}\bar{\rho}_0-\bar{\rho}_0)+\int_0^{t}\int_s^{t}e^{(\tau-s)\Delta}\Delta w_s\,d\tau ds+\int_0^{t}w(\tau)\,d\tau\\
	&=\bar{\rho}(t)-\bar{\rho}_0.
	\end{split}
\end{equation}
We exchanged the order of integral since $e^{(\tau-s)\Delta}\Delta w_s$ is bounded under $L^2$ norm.  This identity first of all implies that $\bar{\rho}$ is a weak solution since $\rho\in C([0, T]; L^2)$. Moreover, it also implies that $\bar{\rho}\in C^{\infty}([t_1, T])$ under any $H^m$ norm.  Hence, taking derivative on time, we find that $\bar{\rho}$ is a strong solution. Since $t_1$ is arbitrary, the claim follows.

The strong solution is a mild solution on $[0, T]$. The uniqueness then  follows trivially by the uniqueness of mild solutions.
\end{proof}

We are more interested in the non-negative initial data due to the problem we consider.
\begin{proposition}\label{pro:nonblowup}
Besides the conditions in Proposition \ref{mild:loc}, if we also have $\rho_0\ge 0$, then
\begin{enumerate}
\item For all $t$ in the integral of existence, we have $\rho(x,t)\ge 0$.

\item The strong solution exists globally, i.e. $T_b=\infty$.
\end{enumerate}
\end{proposition}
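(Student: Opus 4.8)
The plan is to prove the two assertions in order, establishing nonnegativity first (the $L^p$ estimates below genuinely use $\rho\ge 0$) and then global existence through the continuation criterion of Proposition \ref{mild:loc}. Throughout I would work on a fixed interval $[0,T]$ with $T<T_b$, where by Proposition \ref{pro:strongunique} the solution is smooth and lies in $H^{m'}$ for every $m'$, so all integrations by parts are legitimate and $\rho\in C([0,T];H^m)\hookrightarrow C([0,T];L^\infty)$; the case $t=0$ is then recovered by continuity. The one structural identity used everywhere is $\Delta h=-\rho$, which holds because $-\Delta g=\delta_0$ gives $-\Delta(g*\rho)=\rho$.

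For nonnegativity I would test \eqref{eq:pnp} against $\beta'(\rho)$ with the convex function $\beta(s)=\tfrac12(s_-)^2$, $s_-=\max(-s,0)$ (smoothed if one insists on $\beta\in C^2$). The diffusion contributes $-\int\beta''(\rho)|\nabla\rho|^2\le0$, while the drift, after integration by parts and the identity $\Delta h=-\rho$, contributes $-\int\gamma(\rho)\rho$ with $\gamma'(s)=s\beta''(s)$; for this $\beta$ one checks $\gamma=\beta$, so this term equals $\tfrac12\int(\rho_-)^3$. Hence
\[
\frac{d}{dt}\int_{\bbR^d}\beta(\rho)\,dx
=-\int_{\bbR^d}\beta''(\rho)|\nabla\rho|^2\,dx+\frac12\int_{\bbR^d}(\rho_-)^3\,dx
\le \|\rho_-\|_{L^\infty}\int_{\bbR^d}\beta(\rho)\,dx .
\]
Since $\rho_0\ge0$ gives $\int\beta(\rho_0)=0$ and $\|\rho_-\|_{L^\infty}\le\|\rho\|_{L^\infty}$ is bounded on $[0,T]$, Gronwall forces $\int\beta(\rho)\equiv0$, i.e.\ $\rho\ge0$.

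For global existence I would first note that mass conservation (Proposition \ref{mild:loc}) together with $\rho\ge0$ gives $\|\rho(t)\|_{L^1}=\|\rho_0\|_{L^1}$. Next, testing against $\rho^{p-1}$ for $p\in(1,\infty)$ and again using $\Delta h=-\rho$ yields
\[
\frac{1}{p}\frac{d}{dt}\int_{\bbR^d}\rho^p\,dx
=-\frac{4(p-1)}{p^2}\int_{\bbR^d}|\nabla\rho^{p/2}|^2\,dx-\frac{p-1}{p}\int_{\bbR^d}\rho^{p+1}\,dx\le0 ,
\]
where the crucial extra dissipation $-\int\rho^{p+1}$ is nonpositive precisely because the interaction is repulsive and $\rho\ge0$. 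Thus $\|\rho(t)\|_{L^p}\le\|\rho_0\|_{L^p}$ for all $p\in[1,\infty]$ (sending $p\to\infty$ for $L^\infty$), uniformly in time, and splitting the kernel $\nabla g\sim|x|^{1-d}$ into singular and far-field parts gives the uniform bound $\|\nabla h(t)\|_{L^\infty}\le C(\|\rho_0\|_{L^\infty}+\|\rho_0\|_{L^1})$. Finally, differentiating the $H^m$ norm and moving one derivative onto $\rho$,
\[
\tfrac12\frac{d}{dt}\|\rho\|_{H^m}^2
=-\|\nabla\rho\|_{H^m}^2-\langle\nabla\rho,\rho\nabla h\rangle_{H^m}
\le-\|\nabla\rho\|_{H^m}^2+\|\nabla\rho\|_{H^m}\,\|\rho\nabla h\|_{H^m} ,
\]
and I would estimate $\|\rho\nabla h\|_{H^m}$ by the fractional Leibniz (Kato--Ponce) rule, $\|\rho\nabla h\|_{H^m}\lesssim\|\rho\|_{L^\infty}\|\nabla h\|_{H^m}+\|\rho\|_{H^m}\|\nabla h\|_{L^\infty}$, combined with $\|\nabla h\|_{H^m}\lesssim\|\rho\|_{L^1}+\|\rho\|_{H^m}$ from \eqref{hm:algebra2}. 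Since $\|\rho\|_{L^\infty},\|\rho\|_{L^1},\|\nabla h\|_{L^\infty}$ are all bounded uniformly in time by the previous step, this gives $\|\rho\nabla h\|_{H^m}\le C(1+\|\rho\|_{H^m})$ with $C$ independent of $t$; Young's inequality absorbs $\|\nabla\rho\|_{H^m}$ into the dissipation and leaves the linear inequality $\frac{d}{dt}\|\rho\|_{H^m}^2\le C(1+\|\rho\|_{H^m}^2)$, so by Gronwall $\|\rho(t)\|_{H^m}$ stays finite on every finite interval, contradicting the blow-up criterion unless $T_b=\infty$.

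The main obstacle is obtaining a closed, \emph{linear} differential inequality for $\|\rho\|_{H^m}^2$: the naive algebra bound \eqref{hm:algebra1}, $\|\rho\nabla h\|_{H^m}\le C\|\rho\|_{H^m}\|\nabla h\|_{H^m}$, produces a superlinear term of order $\|\rho\|_{H^m}^4$ and yields only a local bound. The resolution combines two facts: the fractional Leibniz rule, which lets one place the \emph{uniformly} bounded norms $\|\rho\|_{L^\infty}$ and $\|\nabla h\|_{L^\infty}$ on the rougher factor, and the repulsive sign, which simultaneously supplies the extra $-\rho^{p+1}$ dissipation behind the uniform $L^p$ control and keeps $\|\nabla h\|_{L^\infty}$ bounded. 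The remaining points are technical: justifying the integrations by parts and the convex-function computation via the smoothing of Proposition \ref{pro:strongunique} for $t>0$ and a smooth approximation of $\beta$, then passing to $t=0$ by continuity.
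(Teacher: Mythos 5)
Your proposal is correct, but on both halves it takes a genuinely different route from the paper. For nonnegativity, the paper never tests the equation directly: it introduces the auxiliary problem $\rho_n(t)=e^{t\Delta}\rho_0+\int_0^t e^{(t-s)\Delta}\nabla\cdot\bigl(\rho_n^+\nabla(g^{\e_n}*\rho)\bigr)ds$, where the positive-part truncation makes the drift term vanish identically against $\rho_n^-$ (since $\nabla\rho_n^-\,\rho_n^+=0$ a.e.), yielding $\partial_t\|\rho_n^-\|_2^2\le 0$ at bare $L^2$ regularity, and then passes to the limit $\rho_n\to\rho$ in $C([0,T];L^2)$ -- a convergence the paper only asserts. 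Your direct test with $\beta'(\rho)$, $\beta(s)=\tfrac12(s_-)^2$, instead leaves the sign-bad cubic term $\tfrac12\int(\rho_-)^3$, which you legitimately tame by Gronwall using $\|\rho\|_{L^\infty}<\infty$ on compact subintervals; this works because Proposition \ref{pro:strongunique} (which nowhere uses positivity, so there is no circularity) gives smoothness for $t>0$ and $H^m\hookrightarrow L^\infty$, and it buys you an argument on the exact solution with no approximation scheme or limit passage. For global existence, your $L^p$ identity is exactly the paper's \eqref{nonblowup1}, but the continuation differs: the paper keeps $p<\infty$, takes $n=[m]+i$ an even integer, expands $(-\Delta)^{n/2}$ by Leibniz, and closes a nonlinear inequality in $\|\rho\|_{\dot H^n}$, $\|\rho\|_{\dot H^{n+1}}$ with sublinear dissipation powers via Gagliardo--Nirenberg and the Calder\'on--Zygmund bound \eqref{eq:auxSingularInt}, discarding the top-order terms by the sign of $\rho$; you instead send $p\to\infty$ to get $\|\rho(t)\|_\infty\le\|\rho_0\|_\infty$ (valid, since $\|\rho(t)\|_p\le\|\rho_0\|_p$ with no $p$-dependent constants and $\rho_0\in L^1\cap L^\infty$), bound $\|\nabla h\|_{L^\infty}$ by the kernel splitting the paper itself uses in proving Proposition \ref{prop:uniquenessresult}, and close a \emph{linear} Gronwall inequality at level $H^m$ by Kato--Ponce together with \eqref{hm:algebra2}. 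Your route is shorter and works directly at fractional $m$, avoiding the integer-order bookkeeping, at the price of importing the fractional Leibniz rule as an external ingredient; the paper's route is more self-contained, and both correctly rule out finite-time blow-up against the criterion of Proposition \ref{mild:loc}.
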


\begin{proof}

1. The proof of non-negativity follows in a similar way as in \cite{Lizhen2018}. Here, we sketch the proof briefly.

We fix an arbitrary $T\in (0, T_b)$ and let $\rho$ be the mild solution on $[0, T]$.
We consider the approximated problem
\[
\rho_n(t)=e^{t\Delta}\rho_0+\int_0^t e^{(t-s)\Delta}\nabla\cdot(\rho_n^+(s)\nabla(g^{\e_n}*\rho)),
\]
where $\e_n=1/n$. Since $\nabla(g^{\e_n}*\rho)$ is a smooth function with the derivatives bounded, then $\rho_n\in C([0, T], H^1)\cap C([0, T], L^1) \cap C^{\infty}((0, T), H^1)$.  Then, for $t>0$, it holds in $H^{-1}$ that
\[
\partial_t\rho_n=\nabla\cdot(\rho_n^+(s)\nabla(g^{\e_n}*\rho)+\Delta \rho_n.
\]
Multiply $\rho_n^-=-\min(\rho_n, 0)$ on both sides and integrate.
The right hand is equal to $\|\nabla \rho_n^-\|_2^2$.
Consider the left hand side. Since  $\partial_t\rho_n=\lim_{h\to 0^+}\frac{\rho_n(t)-\rho_n(t-h)}{h}$ converges in $L^2$, we have
\[
\langle \rho_n^-, \partial_t\rho_n\rangle=\lim_{h\to 0^+}\left\langle \rho_n^-,
\frac{\rho_n(t)-\rho_n(t-h)}{h} \right\rangle
\le -\frac{1}{2}\lim_{h\to 0^+}\frac{\|\rho_n^-(t)\|_2^2-\|\rho_n^-(t-h)\|^2}{h}.
\]
Since $|\rho_n^-(t_2)-\rho_n^-(t_1)|\le |\rho_n(t_2)-\rho_n(t_1)|$ holds pointwise and therefore in $L^2$, we find that $t\to \|\rho_n^-\|_2$ is in $C[0, T]\cap C^1(0, T)$ and
\[
\partial_t\|\rho_n^-\|_2^2\le 0.
\]
This implies that $\rho_n^-=0$ and thus $\rho_n^+=\rho_n$.
As $n\to\infty$, we can show that $\rho_n\to \rho$ in $C([0, T], L^2)$, which further implies that $\rho\ge 0$ on $[0, T]$.

2.  It suffices to show that the solution does not blow up in $L^1$ and $H^m$ norm in a finite time. By the integral preservation and positivity preservation, $\|\rho(t)\|_{L^1}=\|\rho_0\|_{L^1}$. Hence, we only need to consider $H^m$ norm.

Note $\rho_0\in L^p$ for all $p\in [1,\infty)$. Using the facts that $\rho\in H^{m'}$ for any $m'>0$ and that $\rho$ is smooth in time for $t>0$, we can multiply the equation with $p\rho^{p-1}$ and integrate to have for $t>0$,
\begin{multline}\label{nonblowup1}
\partial_t\|\rho\|_p^p=-p(p-1)\inner{\nabla\rho}{\rho^{p-2}\nabla\rho}-\inner{\rho^2}{p\rho^{p-1}}+\inner{\nabla(g*\rho)}{\nabla\rho^{p}}\\
=-(p-1)\int_{\mathbb{R}^d} (p\rho^{p-2}|\nabla\rho|^2+\rho^{p+1})\,dx.
\end{multline}
Using the non-negativity of $\rho$, we find $\|\rho\|_p\le C_p$ is uniformly bounded.

Now, we consider $n=[m]+i$ (where $i=1,2$ so that $n$ is even) and use the data at some $t_1>0$ as the initial data. Set $\|f\|_{\dot{H}^s}=\|(-\Delta)^{\frac{s}{2}}f\|_{L^2}=\||\xi|^s\hat{f}\|_{L^2}$,  then $\|\rho\|_{H^n}$ can be controlled by $\|\rho\|_{\dot{H}^n}$ and $\|\rho\|_{L^2}$. Therefore we only need to show $\|\rho\|_{\dot{H}^n}$ does not blow up, which clearly will indicate that the original $\|\cdot\|_{H^m}$ norm does not blow up.
Multiply \eqref{eq:pnp} by $(-\Delta)^n\rho$ and integrate, we have for $t>t_1$
	\begin{equation}\label{nonblowup2}
	\frac{1}{2}\partial_t\|\rho\|_{\dot{H}^n}^2=-\|\rho\|_{\dot{H}^{n+1}}^2-\inner{\rho^2}{(-\Delta)^n\rho}+\inner{\nabla\rho\cdot\nabla(g*\rho)}{(-\Delta)^n\rho}.
	\end{equation}
	For the second term of \eqref{nonblowup2}, after integrating by parts for $n$ times, we obtain
\begin{equation}
	-\inner{\rho^2}{(-\Delta)^n\rho}=-\langle (-\Delta)^{n/2}\rho, (-\Delta)^{n/2}(\rho^2)\rangle.
\end{equation}
Expanding $(-\Delta)^{n/2}(\rho^2)$ out, this contains terms of the form $C_{\ell}D^{\ell}\rho D^{n-\ell}\rho$ where $\ell=0, 1,\ldots, n$ and $D$ denotes any partial derivative. For the $\ell=0, n$ terms, we use the nonnegativity of $\rho$ and find $-\int_{\bbR^d} \rho |(-\Delta)^{n/2}\rho|^2\,dx\le 0$. Consider that $1\le \ell\le n-1$. By Gagliardo-Nirenberg inequality:
\begin{equation}\label{g-nineq}
	\|D^jf\|_{L^p}\le C\|f\|_{L^r}^{1-\alpha}\|f\|_{\dot{H}^{n+1}}^\alpha, \ \alpha=\frac{j-d/p+d/r}{(n+1)-d/2+d/r}.
	\end{equation}
	Setting $p_{\ell}=\frac{2n}{\ell}, q_{\ell}=\frac{2n}{n-\ell}$, applying H\"{o}lder inequality and \eqref{g-nineq}, we find that for $1\le \ell\le n-1$,
	\begin{equation}\label{nonblowup3}
	|\inner{D^\ell\rho D^{n-\ell}\rho}{ D^{n}\rho}|\le\|D^\ell\rho\|_{L^{p_{\ell}}}\|D^{n-\ell}\rho\|_{L^{q_{\ell}}}\|\rho\|_{\dot{H}^n}\le C\|\rho\|_{\dot{H}^{n+1}}^{\frac{n-d/2+2d/r}{(n+1)-d/2+d/r}}\|\rho\|_{\dot{H}^n}.
	\end{equation}
Here we have used the fact that $\|\rho_t\|_{L^r}\le\|\rho_0\|_{L^r}$.
Here, $p_{\ell}$ and $q_{\ell}$ are chosen so that the corresponding $\alpha\in (0, 1)$. We pick $r>d$ and then the power of $\|\rho\|_{\dot{H}^{m+1}}$ is less than $1$.

For the third term of \eqref{nonblowup2}, we similarly have
\begin{equation}\label{nonblowup4}
	\inner{\nabla\rho\cdot\nabla(g*\rho)}{(-\Delta)^n\rho}=\inner{(-\Delta)^{\frac{n}{2}}(\nabla\rho\cdot\nabla(g*\rho))}{(-\Delta)^{\frac{n}{2}}\rho}.
\end{equation}
Expanding out, we have terms of the form $(\nabla D^{n-\ell}\rho) D^{\ell}\nabla(g*\rho)$. The $\ell=0$ term contributes to
\[
\int_{\bbR^d}\nabla ((-\Delta)^{n/2}\rho)\cdot \nabla(g*\rho) (-\Delta)^{\frac{n}{2}}\rho\,dx=-\frac{1}{2}\int_{\mathbb{R}^d} \rho |(-\Delta)^{\frac{n}{2}}\rho|^2\,dx\le 0.
\]
When $\ell\ge 1$, by the singular integral theory, we have
\begin{gather}\label{eq:auxSingularInt}
\|D^{\ell}\nabla(g*\rho)\|_{L^{p_{\ell}}}=\|D\nabla(g*D^{\ell-1}\rho)\|_{L^{p_{\ell}}}
\le \|D^{\ell-1}\rho\|_{L^{p_{\ell}}},~1<p_{\ell}<\infty.
\end{gather}
Due to this reason, we find that when $\ell=1$, the pairing is controlled by 
\[
\begin{split}
\langle (\nabla D^{n-\ell}\rho) D^{\ell}\nabla(g*\rho), (-\Delta)^{\frac{n}{2}}\rho \rangle 
&\le 
\|\nabla D^{n-1}\rho\|_{2+\delta} \|D\nabla(g*\rho)\|_{2(2+\delta)/\delta}  \|\rho\|_{\dot{H}^n} \\
&\le C\|\nabla D^{n-1}\rho\|_{2+\delta} \|\rho\|_{\dot{H}^n}
\end{split}
\]
By Galiardo-Nirenberg inequality again, $\|\nabla D^{n-1}\rho\|_{2+\delta}\le C\|\rho\|_{\dot{H}^{n+1}}^{\alpha}
\|\rho\|_{\dot{H}^{n}}^{1-\alpha}$, where $\alpha=\frac{d\delta}{2(2+\delta)}$.
For $\ell>1$, using \eqref{eq:auxSingularInt}, the pairing $\langle (\nabla D^{n-\ell}\rho) D^{\ell}\nabla(g*\rho), (-\Delta)^{\frac{n}{2}}\rho \rangle$ is similarly controlled as in \eqref{nonblowup3}. Hence we finally have for $t>t_1$
\begin{equation}
	\frac{1}{2}\partial_t\|\rho\|_{\dot{H}^n}^2\le
	-\|\rho\|_{\dot{H}^{n+1}}^2
	+C\|\rho\|_{\dot{H}^n}^{2-\alpha}\|\rho\|_{\dot{H}^{n+1}}^{\alpha}+C\|\rho\|_{\dot{H}^n}\|\rho\|_{\dot{H}^{n+1}}^{\nu}
\end{equation}
where $\alpha\in(0, 1)$ and $\nu\in (0, 1)$. This gives that $\|\rho\|_{\dot{H}^n}$ never blows up in finite time for $t>t_1$, which further implies that $\|\rho\|_{H^m}$ does not blow up.
\end{proof}

\section{The missing proofs}\label{app:missingpf}

\begin{proof}[Proof of Proposition \ref{tightI}]

Note that for any $N$, $0\le s<t\le T$, one has
\[
X_t^{1,N}-X_s^{1,N}=\frac{1}{N}\int_s^t \sum_{j\neq 1}^N
F(X_r^{1,N}-X_r^{j,N})\,dr+\sqrt{2}(B_t^1-B_s^1).
\]
This then motivates us to define
\begin{gather*}
Z_N:=\sup_{s,t:s<t}\frac{\sqrt{2}|B_t^1-B_s^1|}{(t-s)^{1/2}},~~
U_N:=\frac{1}{N}\left(\int_0^T\left(\sum_{j\neq 1}^N F(X_t^1-X_t^j)\right)^2dt\right)^{1/2}.
\end{gather*}
Clearly,
\[
|X_t^{1,N}-X_s^{1,N}|\le (t-s)^{1/2}(Z_N+U_N).
\]

Moreover, $Z_N$'s have the same distribution for all $N$, and
\[
Z_N<\infty, a.s.
\]
It follows that
\[
\lim_{R_1\to\infty}\sup_{N\ge 2}\mathbb{P}(|Z_N|>R_1)=0.
\]

Using the energy estimate \eqref{energy:est2}, we have
\[
\mathbb{E} U_N^2 \le \mathcal{E}(\rho_0).
\]
Moreover, $\mathbb{E}|X_0^{1,N}|^2<\infty$.

We define
\[
K:=\{ X\in C([0, T]; \bbR^d), |X_0|\le A, |X_t-X_s|\le R(t-s)^{1/2}, \forall 0\le s<t\le T \}.
\]
Clearly, $K$ is a compact set in $C([0, T],\bbR^d)$ by Arzela-Ascoli theorem.

Moreover,
\begin{multline*}
\sup_{N\ge 2}\mathbb{P}(X_t^{1,N}\notin K) \le \sup_{N\ge 2}\mathbb{P}(|X_0^{1,N}|>A)
+\sup_{N\ge 2}\mathbb{P}(Z_N+U_N>R)\\
\le \sup_{N\ge 2}\mathbb{P}(|X_0^{1,N}|>A)
+\sup_{N\ge 2}(\mathbb{P}(Z_N>R/2)+\mathbb{P}(U_N>R/2)).
\end{multline*}
 Using the uniform bound on the moments of $X_0^{1,N}$, $U_N$, we find that
 for any $\e>0$, there exist $A>0, R>0$ such that
 \[
 \sup_{N\ge 2}\mathbb{P}(X_t^{1,N}\notin K)<\epsilon,
 \]
 which concludes the tightness of the law of $X^{1,N}$.

The tightness of $\mathcal{L}(\mu^N)$ follows from (i) and the exchangeability of the system. See \cite[Proposition 2.2]{sznitman1991}.

\end{proof}

\begin{proof}[Proof of Proposition \ref{pro:improvedweak}]

Recall that $h=g*\rho$. By the assumption, one has
\[
\rho\in L^{\infty}((0, T); H^{-1}), ~~\nabla h\in L^{\infty}((0, T); L^2).
\]
Since $\int_{\bbR^3} \rho^{3/2}\,dx\le \|\rho\|_{H^{-1}}\|\sqrt{\rho}\|_{\dot{H}^1}$, we thus have
\[
\|\rho\|_{L^3((0, T); L^{3/2})}\le C\int_0^T\|\nabla \sqrt{\rho}\|_{L^2}^2\,dt
= C\int_0^TI(\rho)\,dt.
\]
Interpolating this with $\rho\in L^1(0, T; L^3)$, we have $\rho\in L^{3r/(5r-6)}(0, T; L^r)$ for $r\in [3/2, 3]$.

Moreover, $\nabla^2h\in L^1(0, T; L^3)$ since $\rho\in L^1(0, T; L^3)$, interpolating with $\nabla h\in L^{\infty}(0, T; L^2)$ by the Gagliardo-Nirenberg inequality, we find that
\[
\nabla h\in L^{q/(q-2)}((0, T); L^q), q\ge 2.
\]
Hence, $\rho\nabla h\in L^{3p/(8p-6)}(0, T; L^p)$ for $p\in [1, 6/5]$. In particular, we have
\[
\rho \nabla h\in L^{6/5}((0, T), L^{12/11}).
\]
Since $\Delta$ is bounded from $W^{1,q}$ to $W^{-1,q}$, by Lemma \ref{limitmeasure:lp}, we have
$\Delta\rho\in L^{8/5}((0, T), W^{-1, 12/11})$.

Recall that
\begin{equation}\label{eq:improvedweak1}
    \langle\rho_t,\phi\rangle-\langle \rho_0,\phi\rangle-\int_0^t\int_{\bbR^{3}}\nabla\phi(x)\cdot \nabla h(x)\rho_s(x)\,dxds-\int_0^t\langle\rho_s,\Delta\phi\rangle ds=0
\end{equation}
for any $\phi\in C_c^2(\bbR^3)$ and $t\in (0, T]$. Using the regularity, we find that this holds for all $\phi\in C_b^2(\bbR^3)$. In fact, we can take smooth truncation of $\phi_n=\phi\chi_n$ where $\chi_n=\chi(x/n)$ and $\chi=1$ in $B(0, 1)$. Then,
$\nabla (\phi\chi_n)\cdot\nabla h\to \nabla\phi\cdot \nabla h$ in $L^2(0, T; L^4)$ and $\langle \rho_s, D^{\alpha}(\phi_n)\rangle\to \langle \rho_s, D^{\alpha}\phi\rangle$ where $|\alpha|\le 2$. The latter holds because $\rho_s\in L^1$ and the
$D^{\alpha}(\phi-\phi_n)$ is bounded and nonzero only outside $B(0, n)$. That $\int_0^t \langle \rho_s, \Delta\phi_n\rangle\,ds
\to \int_0^t \langle \rho_s, \Delta\phi\rangle\,ds$ holds by dominate convergence theorem.

Then, we claim that for any $\phi\in C^1([0, T], C_b^2(\bbR^3))$ and $t\in (0, T]$, it holds that
\begin{multline}\label{weaksol2}
\langle \rho_t, \phi(x, t)\rangle-\langle \rho_0,\phi(x,0)\rangle-\int_0^t\int_{\bbR^{3}}\rho(x,s)\partial_t\phi \,dxds\\
+\int_0^t\int_{\bbR^{3}}\nabla\phi(x,s)\cdot\nabla h(x)\rho(x,s)\,dxds-\int_0^t\langle\rho(x,s),\Delta\phi(\cdot, s)\rangle ds=0.
\end{multline}
In fact, we can take $t=t_{1}$ and $t=t_2$ in \eqref{eq:improvedweak1} and take the difference to obtain
$\langle \rho_{t_2}, \varphi\rangle-\langle \rho_{t_1}, \varphi\rangle
-\int_{t_1}^{t_2}\int \ldots-\int_{t_1}^{t_2}\ldots=0$, where the omitted content is clear.
Then, we can take $\varphi=\phi(\cdot, t_n)$ so that we have kind of Riemann sum. The regularity ensures that the Riemann sum converges to the desired integral form.

For $\phi\in C^1([0, T]; C_c^2(\bbR^3))$, we then have
\[
\int_0^t\langle \partial_t\rho, \phi\rangle
=\int_0^t\int_{\bbR^{3}}\nabla\phi(x,s)\cdot\nabla h(x)\rho(x,s)\,dxds
+\int_0^t\langle \nabla\rho, \nabla\phi\rangle\,ds,
\]
where $\partial_t\rho$ is the distributional derivative of $\rho$. Clearly, the right hand side
is a bounded functional for $\phi\in L^6(0, T; W^{1, 12})$. By possible mollification procedure, we find
\begin{gather}
\partial_t\rho=\nabla\cdot(\rho\nabla h)+\Delta\rho,~\text{in}~L^{6/5}((0, T), W^{-1, 12/11}).
\end{gather}

In fact, this weak solution is also a mild solution. To see this, we mollify $\rho$ as
\[
\rho^{\e,\delta}=J_1^{\delta}(t)*\rho*J_2^{\e}(x)
\]
Here, $J_1$ is the mollification in time while $J_2$ is in space. Then, on $t\in (\delta, T-\delta)$, we have
\[
\partial_t\rho^{\e,\delta}=J_1^{\delta}*J_2^{\e}*\partial_t\rho
=\nabla\cdot(J_1^{\delta}*\rho\nabla h*J_2^{\e})+\Delta \rho^{\e,\delta}.
\]
Since all functions are smooth and bounded, with derivatives bounded,  we have for $\delta<t_1<t<T-\delta$ that
\[
\rho^{\e,\delta}(t)=e^{(t-t_1)\Delta}\rho^{\e,\delta}(t_1)
+\int_{t_1}^t e^{(t-s)\Delta}\nabla\cdot(J_1^{\delta}*\rho\nabla h*J_2^{\e})\,ds
\]

Now, we claim that
\[
\rho^{\e}(t):=J_2^{\e}*\rho(t)
\]
is a bounded continuous function on $[0, T]\times \bbR^3$.
In fact, we let $\phi_x(y)=J_2^{\e}(x-y)\in C_b$.
Then, we can define
 \[
 p_{x,t}(X)=\phi_x(X(t)).
 \]
 This is a continuous bounded functional on $C([0, T]; \mathbb{R}^3)$.
 Then, $p_{x,t}\to p_{(x,t_0)}$ pointwise as $t\to t_0$ and are bounded functional. Hence, we have by dominate convergence theorem
 \[
 \int_{C([0, T]; \mathbb{R}^3)}p_{(x,t)}(X) d\mu\to  \int_{C([0, T]; \mathbb{R}^3)}p_{(x, t_0)}(X) d\mu.
 \]
 This means
 \[
 \rho^{\e}(x, t)\to \rho^{\e}(x, t_0),~\forall x\in \bbR^3.
 \]
 Hence, $\rho^{\e}(x,t)$ is continuous and bounded. Since $\rho^{\e}\in L^1\cap L^{\infty}$, then taking $\delta\to 0$, we find
 \[
 \rho^{\e,\delta}(t)\to \rho^{\e}(t),\text{in }L^1\cap L^{3/2}, t\in (0, T)
 \]

Note that
\[
\|\nabla P\|_r\le C_rt^{3/(2r)-2},~r\in [1,\infty].
\]
Picking $r=4/3$, applying \cite[Theorem 4]{hardy1928} with $\rho\nabla h\in L^{6/5}(0, T; L^{12/11})$, we find that $\int_{t_1}^t \nabla e^{(t-s)\Delta}\cdot (\rho\nabla h)\,ds\in L^{4/3}((0, T); L^{3/2})$. Taking $\delta\to 0$, we have in $L^{4/3}((0, T); L^{3/2})$ that
\[
\int_{t_1}^t e^{(t-s)\Delta}\nabla\cdot(J_1^{\delta}*\rho\nabla h*J_2^{\e})\,ds
\to \int_{t_1}^t e^{(t-s)\Delta}\nabla\cdot(J_2^{\e}*(\rho \nabla h))\,ds.
\]
Then, we have in $L^{4/3}((0, T); L^{3/2})$ that
\[
\rho^{\e}(t)=e^{(t-t_1)\Delta}\rho^{\e}(t_1)
+\int_{t_1}^t e^{(t-s)\Delta}\nabla\cdot(J_2^{\e}*(\rho\nabla h))\,ds.
\]

Now, for any $t>0$, we take $t_1\to 0$. By dominate convergence, we have:
\[
\rho^{\e}(t)=e^{t\Delta}\rho^{\e}(0)
+\int_{0}^t e^{(t-s)\Delta}\nabla\cdot(J_2^{\e}*(\rho\nabla h))\,ds.
\]
Eventually, we take $\e\to 0$ and we have in $L^{4/3}((0, T); L^{3/2})$ that
\begin{gather}
\rho(t)=e^{t\Delta}\rho_0
+\int_{0}^t e^{(t-s)\Delta}\nabla\cdot(\rho\nabla h)\,ds.
\end{gather}
\end{proof}

\begin{proof}[Proof of Proposition \ref{prop:uniquenessresult}]

{\bf Step 1} The $L^p$ bound for $t>0$.

We fix $\delta>0$ and then
\[
\rho\in L^2([\delta, T], L^2(\bbR^3)).
\]

Mollifying the equation for $\rho$ in Proposition \ref{pro:improvedweak}, we have
\begin{equation}\label{weakmolify}
\partial_t\rho^{\e}-\nabla h\cdot\nabla \rho^{\e}-\Delta\rho^{\e}
=J^{\e}*(\nabla\cdot(\rho\nabla h))-\nabla h\cdot\nabla \rho^{\e}=:r^{\e}.
\end{equation}
Note that we do not have $\nabla\rho\cdot\nabla h\in L^1(\delta, T; L^1)$, so $\nabla\rho\cdot\nabla h$ may not be a distribution. This is why we cannot have such a term in the equation. Recall $\rho\in L^2(\delta, T; L^2(\bbR^3))$, and $\nabla^2h\in L^2(\delta, T; L^2(\bbR^3))$ by singular integral theory.  We then use the proof of Lemma II.1 in \cite{diperna1989} and conclude
\[
r^{\e}\to -\rho^2, ~L^1(\delta, T; L_{\loc}^1(\bbR^3)).
\]
(There is a small typo in the proof of Lemma II.1 in \cite{diperna1989}, where $\e^{-N}$ is lost in the expressions on P.517). In fact,
\begin{multline*}
J^{\e}*(\nabla\cdot(\rho\nabla h))-\nabla h\cdot\nabla \rho^{\e}
=-\int_{\bbR^3} \nabla_y(J^{\e}(x-y))\cdot \rho(y)\nabla h(y)\,dy-\nabla h\cdot\nabla \rho^{\e}\\
=\int_{\bbR^3}(\nabla J^{\e})(x-y)\cdot(\nabla h(y)-\nabla h(x))\rho(y)\,dy.
\end{multline*}
By our construction, $\|\nabla J^{\e}\|_{\infty}\le C\e^{-4}$. It follows that
\begin{multline*}
\Big\|\int_{\bbR^3}(\nabla J^{\e})(x-y)\cdot(\nabla h(y)-\nabla h(x))\rho(y)\,dy \Big\|_{L^1(B(0, R))} \\
\le C\int_{B(0, R)}dx \int_{|y-x|\le C\e}\e^{-3}\rho(y)\frac{|\nabla h(y)-\nabla h(x)|}{\e}\,dy.
\end{multline*}
Let $D_1=\{(x, y): x\in B(0, R), |y-x|\le C\e\}$. We then have the above controlled by
\[
\left(\iint_{D_1}\rho^2(y)dxdy \right)^{1/2}\e^{-3}\left(\iint_{D_1}\Big(\frac{|\nabla h(y)-\nabla h(x)|}{\e}\Big)^2\,dxdy\right)^{1/2}.
\] 
The first term is controlled by $C\e^{3/2}\|\rho\|_{L^2(B(0, R+1))}$. The second term is controlled by
\begin{multline*}
\e^{-3}\left(\int_{B(0, R)}dx \int_{|y-x|\le C\e}dy\Big(\frac{|\nabla h(y)-\nabla h(x)|}{\e} \Big)^2 \right)^{1/2} \\
\le \e^{-3/2}\left(\int_{B(0, R)}dx \int_{|z|\le C}dz \Big(\int_0^1|\nabla^2h(x+t\e z)|\Big)^2 \right)^{1/2}.
\end{multline*}
Since
\[
\left[\int_{B(0, R)}dx(\int_0^1|\nabla^2h(x+t\e z)|)^2 \right]^{1/2}
\le \int_0^1\|\nabla^2h(\cdot+t\e z)\|_{L^2(B(0, R))}\,dt
\le \|\nabla^2 h\|_{L^2(B(0, R+1))}.
\]
We have the second term controlled by $\e^{-3/2}C\|\nabla^2 h\|_{L^2(B(0, R+1))}$. Hence, 
\[
\|r^{\e}(t)\|_{L^1(B(0, R))}\le C\|\rho(t)\|_{L^2(B(0, R+1))}\|\nabla^2h(t)\|_{L^2(B(0, R+1))}.
\]
This bound is uniform in $\e$. With the time dimension added in, the corresponding norms are similarly controlled.
By a density argument, we can then approximate $\rho$ and $\nabla^2h$ with smooth functions in their respective spaces. For smooth functions, the limit is clearly
$\rho\Delta h=-\rho^2$. 

Recall that $\rho\,dx\in C([0, T]; C_b(\mathbb{R}^3)')$, we have $\rho^{\e}\in C([0, T]; L^1(\mathbb{R}^3))$.  Using basically the same argument as in Step 1 of the proof of Lemma 2.5 in \cite{egana2016}, we obtain that $\rho^{\e}$ is a Cauchy sequence in $L^1(\delta, T; L_{\loc}^1)$ as $\epsilon\to 0$. In fact, for convex function $\beta\in C^1(\bbR^3)$, we have the following chain rule:
\begin{equation}\label{mollifychain}
\partial_t\beta(\rho^\epsilon)=\nabla\beta(\rho^\epsilon)\cdot\nabla h+\beta'(\rho^{\e})r^\e+\Delta\beta(\rho^\e)-\beta''(\rho^\e)|\nabla\rho^\e|^2.
\end{equation}
Equation \eqref{weakmolify} and \eqref{mollifychain} will hold if we replace $\rho^{\e}$ with $\rho^{\e_1}-\rho^{\e_2}$ and $r^{\e}$ with $r^{\e_1}-r^{\e_2}$ since \eqref{weakmolify} is linear. In particular, we choose $\beta(s)=s^2/2$ for $|s|\le A$ and $\beta(s)=A|s|-A^2/2$ for $|s|\ge A$. This will give $\lim_{\e_1\to 0, \e_2\to 0}\sup_{0\le t\le T}\int_{\bbR^3} \beta(\rho^{\e_1}-\rho^{\e_2})\chi(x)\,dx=0$ for any $\chi\in C_c^{\infty}$. (There is only one difference from  \cite{egana2016}\tcb{:} to justify $\int_{\bbR^3} \beta(\rho^{\e_1}-\rho^{\e_2})\nabla h \cdot \nabla\chi \to 0$, we use $\nabla h\in L^{\infty}(L^2)$ and $\rho^{\e_1}-\rho^{\e_2}\to 0$ in $L^2(\delta, T; L^2)$).

Consequently, we have $\rho\in C([\delta, T]; L_{\loc}^1(\bbR^3))$. Moreover, using the uniform of estimates $\int_{\bbR^3} \rho |x|^2\,dx$ so that $\{\rho(t)\}_{t\in [\delta, T]}$ is tight, we have
\begin{gather}
\rho\in C([\delta, T]; L^1(\bbR^3)).
\end{gather}

Next, following the proof of Lemma 2.5 in \cite{egana2016}, taking some non-negative test function $\chi\in C_c^{\infty}(\bbR^3)$ and integrate \eqref{mollifychain} over $[\delta_1, t_1]$ where $\delta\le \delta_1\le t_1$, we find
\begin{multline}
\int_{\bbR^3} \beta(\rho_{t_1}^\epsilon)\chi dx + \int_{\delta_1}^{t_1}\int_{\bbR^3}\beta''(\rho_{s}^\epsilon)|\nabla \rho_{s}^\epsilon|^2\chi dxds
= \int_{\bbR^3} \beta(\rho_{\delta_1}^\epsilon)\chi dx \\
+ \int_{\delta_1}^{t_1}\int_{\bbR^3}\{\beta(\rho_s^\e)\Delta\chi+(\beta'(\rho_s^\e)r^\e+\rho_s\beta(\rho_s^\e))\chi-\beta(\rho_s^\e)\nabla h\cdot\nabla\chi\}dxds.
\end{multline}
We first consider $\beta\in C^2(0,\infty)$ that is (i) convex, linear outside a compact set ( i.e., $\beta''$ is continuous with compact support); (ii) for any $|u|\le L$, there is $C(L)$ such that $|\beta(u)|\le C(L)|u|$. Taking the limit $\e\to0$ first and then $\chi_R(x)=\chi(\frac{x}{R})$ with $R\to\infty$,  we obtain that
\begin{multline}\label{convexineq}
\int_{\bbR^3} \beta(\rho_{t_1}) dx+ \int_{\delta_1}^{t_1}\int_{\bbR^3}\beta''(\rho_{s})|\nabla \rho_{s}|^2 dxds \\
\le \int_{\bbR^3}\beta(\rho_{\delta_1})dx+\int_{\delta_1}^{t_1}\int_{\bbR^3}(-\beta'(\rho_s)\rho_s^2+\rho_s\beta(\rho_s))^+ dxds.
\end{multline}
The left hand side is obtained by Fatou's lemma since $\beta''(s)\ge0$. The convergence of the first term on the right can be obtained by decomposing $\beta(u)=(\beta(u)-A|s|)+A|s|$. $(\beta(u)-A|s|)$ is treated by dominate convergence theorem while $A|s|$ is treated by the $L^1$ convergence of mollification. Other terms are dealt with by the regularity and the convergence of $r^{\e}\to -\rho^2$ in $L^1(\delta, T; L_{\loc}^1(\bbR^3))$.

For general convex $\beta\in C^2(0,\infty)$ with  (i)  $0\le\beta(u)\le C(1+u|\log(u)|)$, $u\beta(u)-u^2\beta'(u)\le C(1+u^2)$; (ii) for any $|u|\le L$, there is $C(L)$ such that $|\beta(u)|\le C(L)|u|$, we may choose a sequence of smooth convex functions $\beta_R$ with linear growth at infinity to approximate $\beta$ and obtain \eqref{convexineq}.
In fact, for such $\beta$, $\int_{\delta_1}^{t_1}\int_{\bbR^3}(-\beta'(\rho_s)\rho_s^2+\rho_s\beta(\rho_s))^+ dxds$ is integrable by decomposing the integrals into domains for $\rho\le 1$ and $\rho\ge 1$.

 Now mimicking the proof of Lemma 2.7 in \cite{egana2016}, we take for $p\ge 2$
\[
\beta(u) := \frac{u^p}{p}\textbf{1}_{u\le K}+\left(\frac{K^{p-1}}{\log K}(u\log u-u)-\frac{1}{q}K^p+\frac{K^{p}}{\log K}\right)\textbf{1}_{u\ge K},
\]
where $\frac{1}{p}+\frac{1}{q}=1$. Then $\beta$ is convex, nonnegative and $\beta'(x)\ge0$. Moreover, $u\beta(u)-u^2\beta'(u)\le 0$ if $K$ is large enough. By plugging $\beta$ into \eqref{convexineq} and sending $K\to\infty$, we find that
\[
\frac{1}{p}\|\rho_{t_1}\|_p^p+ \frac{4(p-1)}{p^2}\int_{\delta_1}^{t_1}\int_{\bbR^3}|\nabla \rho_{s}^{p/2}|^2 dxds \le \frac{1}{p}\|\rho_{\delta_1}\|_p^p.
\]

Note that we know $\rho\in L^1(0, T; L^3(\bbR^3))$ and that $0<\delta\le \delta_1$ are arbitrary, we then find that
\[
\rho\in L_{\loc}^{\infty}((0, T); L^p(\bbR^3)),~p\in (1, 3].
\]

\begin{remark}
If we instead have $\rho\in L^2(0, T; L^2(\bbR^3))$, we will have $\rho\in L^{\infty}(0, T; L^p)$ for any $p\in [1,\infty)$.
\end{remark}

{\bf Step 2} Weak-strong uniqueness.

For $t>0$, we have $\frac{d}{dt}\|\rho\|_2^2+2\|\nabla\rho\|_2^2\le 0$. Since we do not have $\rho\in L^{\infty}(0, T; L^{3/2})$, so it is very hard to obtain the usual hyper-contractivity
\[
\sup_{0< t\le T}t^{p-\frac{3}{2}}\|\rho\|_p^p\le C.
\]
Using $\rho\in L^{\infty}(0, T; L^1(\bbR^d))$, we only have
\[
\frac{d}{dt}\|\rho\|_2^2+\|\rho\|_2^{10/3}\le 0
\]
which yields $\sup_{0<t\le T}t^{3/2}\|\rho\|_2^2\le C$. This is not enough to prove the uniqueness as in the standard Keller-Segel equations (see \cite{liuwang2016}). Hence, we must use other methods to prove the uniqueness. Here, we use the strategy in \cite{serfaty2018mean}, where weak-strong uniqueness was shown by using the Coulomb energy. (If we know $\rho\in L^2(0, T; L^2)$, then $\rho\in L^{\infty}(0, T; L^p)$ for any $p\in [1,\infty)$, and the usual method will work.)

Using $\rho\in L_{\loc}^{\infty}((0, T); L^p(\bbR^3)),~p\in (1, 3]$ and converting the semigroup  equation for $\rho$ into the semigroup form for $\nabla h$, we see $\nabla h\in AC(t_0, T; L^2)$ for any $t_0>0$. Here ``AC" means absolutely continuous. It is then clear that
\begin{gather}
\partial_t\nabla h=\nabla g*\nabla(\rho \nabla h)+\Delta \nabla h.
\end{gather}

Consider the strong solution $\rho_2$. $\nabla h_2\in C^{\infty}$ is clear.
\[
\nabla h_2=\nabla g*\rho_2=\nabla g|_{|x|\le 1}*\rho_2
+\nabla g|_{|x|\ge 1}*\rho_2.
\]
We have
\[
|\nabla h_2|\le \|\nabla g|_{|x|\le 1}\|_1\|\rho_2\|_{\infty}
+\|\nabla g_{|x|\ge 1}\|_{\infty}\|\rho_2\|_1.
\]
Hence, $\nabla h_2$ is bounded. The derivatives of $\nabla h_2$ are clearly bounded. Moreover, by the Hardy-Littlewood-Sobolev inequality, $\nabla h_2\in L^{\infty}(0,T; L^q(\bbR^3))$ for all $q>3/2$. Hence, $\rho_2\nabla h_2\in L^{\infty}(L^p)$
for any $p\ge 1$. We also have the equation
\[
\partial_t\nabla h_2=\nabla g*\nabla(\rho_2 \nabla h_2)+\Delta \nabla h_2.
\]
Note that the singular integral theory tells us that $\nabla g*\nabla(\rho_2 \nabla h_2)\in L^{\infty}(0, T; L^q)$, $q>1$.

For $t>0$, we have
\[
\partial_t(\nabla h-\nabla h_2)
=\nabla g*\nabla(\rho (\nabla h-\nabla h_2))
+\nabla g*\nabla((\rho-\rho_2)\nabla h_2)
+\Delta(\nabla h-\nabla h_2).
\]

For $t>t_0>0$, $\nabla h-\nabla h_2\in L^{\infty}[t_0, T, L^6]$, then we can pair the equation with
$\nabla h-\nabla h_2$.

Then,
\begin{multline*}
\frac{1}{2}\partial_t\|\nabla h-\nabla h_2\|_2^2
=-\langle \nabla h-\nabla h_2,
\rho(\nabla h-\nabla h_2)\rangle
-\langle \nabla h-\nabla h_2,
(\rho-\rho_2)\nabla h_2\rangle \\
+\langle \nabla h-\nabla h_2, \Delta(\nabla h
-\nabla h_2)\rangle=:I_1+I_2+I_3.
\end{multline*}
Here, we have use the fact
\begin{gather*}
\langle \nabla \phi , \nabla g*\nabla\cdot v\rangle
=-\langle  g*\Delta \phi, \nabla\cdot v\rangle=\langle \nabla (g*\Delta\phi), v\rangle=-\langle \nabla\phi, v\rangle.
\end{gather*}

Clearly, $I_1\le 0$ and $I_3\le 0$.
\begin{multline*}
I_2=-\int_{\bbR^3} \mathrm{div}(\nabla(h-h_2)\otimes\nabla(h-h_2)-\frac{1}{2}|\nabla(h-h_2)|^2 I)\nabla h_2\,dx \\
=\int_{\bbR^3} (\nabla(h-h_2)\otimes\nabla(h-h_2)-\frac{1}{2}|\nabla(h-h_2)|^2 I)\nabla^2h_2\,dx
\le C\int_{\bbR^3}|\nabla(h-h_2)|^2\,dx.
\end{multline*}
Note that it is exactly at this point we need $\rho_2$ to be the strong solution.

Using Gr\"onwall, we have for $t\in [t_0, T]$
\[
\|\nabla h(t)-\nabla h_2(t)\|_2^2\le C(T)\|\nabla h(t_0)-\nabla h_2(t_0)\|_2^2.
\]

Finally,
\[
\|\nabla h(t_0)-\nabla h_2(t_0)\|_2^2=\mathscr{E}(t_0)-2\int_{\bbR^3}\nabla h(t_0)\cdot\nabla h_2(t_0)\,dx
+\mathscr{E}_2(t_0),
\]
where
\[
\mathscr{E}(t):=\iint_{\bbR^3\times\bbR^3} g(x-y)\rho(x,t)\rho(y, t)\,dxdy=\int_{\bbR^3}|\nabla h(x, t)|^2\,dx,
\]
and $\mathscr{E}_2$ is similarly defined for $\rho_2$ and $h_2$. The second term is equal to $2\int_{\bbR^3} h_2(t_0)\rho(t_0)\,dx$ which is continuous in $t_0$. $\mathscr{E}_2(t)$ is also continuous. Hence, we then have
\[
\lim_{t_0\to 0}\|\nabla h(t_0)-\nabla h_2(t_0)\|_2^2
=\lim_{t_0\to 0}\mathscr{E}(t)-2\mathcal{E}(\rho_0)+\mathcal{E}(\rho_0)\le \mathcal{E}(\rho_0)-2\mathcal{E}(\rho_0)+\mathcal{E}(\rho_0)
\]
by the condition.  This means $\rho-\rho_2=0$ in $L^{\infty}(0, T; H^{-1})$.
Since they are both in $L^{\infty}(0, T; L^1)$, then they must equal almost everywhere.
\end{proof}

\bibliographystyle{unsrt}
\bibliography{meanfield}
\end{document}